\tikzset{
    scale plot marks/.is choice,
    scale plot marks/false/.code={
        \def\pgfuseplotmark##1{\pgftransformresetnontranslations\csname pgf@plot@mark@##1\endcsname}
    },
    scale plot marks/true/.style={},
    scale plot marks/.default=true
}
\tikzset{
    partial ellipse/.style args={#1:#2:#3}{
        insert path={+ (#1:#3) arc (#1:#2:#3)}
    }
}
\tikzset{fleche/.style args={#1:#2}{ postaction = decorate,decoration={name=markings,mark=at position #1 with {\arrow[#2,scale=2]{>}}}}}
\newtheorem{prop}{Proposition}[section]
\newtheorem{thm}[prop]{Theorem}
\newtheorem{lemme}[prop]{Lemma}
\newtheorem{cor}[prop]{Corollary}
\theoremstyle{remark}
\theoremstyle{definition}
\newtheorem{definition}[prop]{Definition}
\newtheorem{exemple}[prop]{Example}
\newcommand{\abs}[1]{\left\lvert #1 \right\rvert}
\newcommand{\Int}{\mathrm{Int}~}
\newcommand{\norm}[1]{\left\lVert #1 \right\rVert}
\newcommand{\setof}[2]{\left\{ #1 ~ \middle\arrowvert ~ #2 \right\}}
\newcommand{\prodscal}[2]{\left\langle #1 ~ \middle\arrowvert ~ #2 \right\rangle}
\newcommand{\II}{\mathrm{II}}
\begin{document}
\title{Embedded surfaces with Anosov geodesic flows, approximating spherical billiards}
\author{Mickaël Kourganoff\footnote{Institut Fourier, Grenoble University, France}}
\date{}

\maketitle

\begin{abstract}
We consider a billiard in the sphere $\mathbb S^2$ with circular obstacles, and give a sufficient condition for its flow to be uniformly hyperbolic. We show that the billiard flow in this case is approximated by an Anosov geodesic flow on a surface in the ambiant space $\mathbb S^3$. As an application, we show that every orientable surface of genus at least $11$ admits an isometric embedding into $\mathbb S^3$ (equipped with the standard metric) such that its geodesic flow is Anosov. Finally, we explain why this construction cannot provide examples of isometric embeddings of surfaces in the Euclidean $\mathbb R^3$ with Anosov geodesic flows.
\end{abstract}

\section{Introduction}

\subsection{Anosov geodesic flows for embedded surfaces}
The geodesic flow of any Riemannian surface whose curvature is negative everywhere is Anosov: in particular, any orientable surface of genus at least $2$ can be endowed with a hyperbolic metric, for which the geodesic flow is Anosov. On the contrary, there is no Riemannian metric on the torus or the sphere with an Anosov geodesic flow.

If a closed surface admits an isometric embedding in $\mathbb R^3$, then it needs to have positive curvature somewhere. However, it is still possible to obtain an Anosov geodesic flow for such a surface, as shown by Donnay and Pugh~\cite{donnay2003anosov}. More precisely, they showed that there exists a genus $g_0$ such that for all $g \geq g_0$, the orientable surface of genus $g$ admits an isometric embedding in $\mathbb R^3$ whose geodesic flow is Anosov (see~\cite{MR2041262}). The value of $g_0$ is completely unknown: in particular, it is unknown whether it is possible to embed isometrically a surface of genus smaller than one million in $\mathbb R^3$ so that its geodesic flow is Anosov.

The same question may be asked for embeddings in the sphere $\mathbb S^3$ endowed with the standard metric. Here, we have the following situation:
\begin{prop}
Any closed surface $M$ isometrically embedded in $\mathbb S^3$ admits at least one point at which the Gauss curvature is at least $1$ (except if $M$ is a torus or a Klein bottle).
\end{prop}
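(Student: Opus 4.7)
The plan is to combine the Gauss equation for surfaces in a space of constant curvature with a topological obstruction for smooth line fields on closed surfaces.

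First, I would use the Gauss equation. Since $\mathbb{S}^3$ has constant sectional curvature $1$, the intrinsic Gauss curvature $K$ of $M$ at a point $p$ is related to the principal curvatures $\kappa_1(p), \kappa_2(p)$ by
\[
K(p) = 1 + \kappa_1(p)\kappa_2(p).
\]
Therefore the condition ``$K(p) \geq 1$ somewhere'' is equivalent to ``$\kappa_1 \kappa_2 \geq 0$ somewhere on $M$''.

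Next I would argue by contradiction: suppose $K < 1$ everywhere, so $\kappa_1\kappa_2 < 0$ at every point. In particular $\kappa_1 \neq \kappa_2$ throughout $M$, so $M$ has no umbilic points, and the two eigenspaces of the shape operator depend smoothly on $p$ and form two orthogonal smooth \emph{line fields} on $M$ without singularities. Here it is important that we speak of line fields rather than vector fields: in the non-orientable case, no globally consistent unit normal exists, but flipping the sign of the local normal only negates the shape operator, leaving its eigen-\emph{lines} intact. Hence the principal line fields are globally well defined on $M$ in all cases.

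The last step is topological: a closed surface that carries a nowhere-singular continuous tangent line field must have Euler characteristic zero (Poincaré–Hopf for line fields, where indices are half-integers but the sum still equals $\chi(M)$). Since $\chi(M)=0$ forces $M$ to be diffeomorphic to the torus or to the Klein bottle, we are done.

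The only delicate point is justifying the line-field Poincaré–Hopf statement and making sure the smoothness of the principal line fields is clean (this is standard away from umbilics). Everything else is a short bookkeeping argument using the Gauss equation.
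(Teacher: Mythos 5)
Your argument is the same as the paper's: apply the Gauss equation $K = 1 + \kappa_1\kappa_2$, note that $K<1$ everywhere forces $\kappa_1\kappa_2<0$ everywhere, and conclude from the resulting nonsingular principal direction fields that $\chi(M)=0$. You are a bit more careful than the paper about working with line fields rather than vector fields (which matters for the non-orientable case and for the sign ambiguity of principal directions), but the core approach is identical.
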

\begin{proof}
The curvature of the surface is given at each point by $K = k_1 k_2 + 1$, where $k_1$ and $k_2$ are the principal curvatures of the surface. If $K < 1$ everywhere, then the principal curvatures are nonzero and have different signs at each point. Thus the principal directions induce two nonsingular vector fields on $M$, which implies that the Euler characteristic of $M$ must be zero.
\end{proof}

We will show for the first time that it is possible to embed isometrically in $\mathbb S^3$ a Riemannian surface with Anosov geodesic flow. More precisely:
\begin{thm} \label{thmGenus11}
Every orientable surface of genus at least $11$ admits an isometric embedding into $\mathbb S^3$ such that its geodesic flow is Anosov.
\end{thm}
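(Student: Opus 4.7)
The strategy is a spherical analogue of Donnay and Pugh's approximation of a dispersing billiard by the geodesic flow of an embedded surface. There are three main ingredients.

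\textbf{Step 1 (a uniformly hyperbolic spherical billiard).} Exhibit a billiard on $\mathbb{S}^2$ whose table is the complement of a finite family $D_1,\dots,D_n$ of open round disks and whose flow is uniformly hyperbolic. The hyperbolicity must come from an invariant cone field that is strictly expanded by the reflection law on each convex obstacle; since the table itself is positively curved (curvature $1$), free segments between collisions focus geodesic pencils, so one must choose the radii and the placement of the obstacles so that the dispersing effect at each collision dominates the focusing between collisions. This is the content of the ``sufficient condition'' announced in the abstract.

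\textbf{Step 2 (the surface in $\mathbb{S}^3$).} Realise $\mathbb{S}^2$ as the equator of $\mathbb{S}^3$, take two symmetric copies of the table $\mathbb{S}^2 \setminus \bigcup_i D_i$ (one in each hemisphere of $\mathbb{S}^3$), and glue them along their $n$ boundary circles. Smooth the resulting doubled surface in a small neighbourhood of each seam by inserting a thin ``cap'' in $\mathbb{S}^3$ whose geometry is modelled on the tube of a circle, so that a geodesic entering the cap exits along the direction prescribed by elastic reflection off the original disk. A direct Euler characteristic calculation gives $\chi = 2(2-n)$, hence genus $n-1$, so $n = 12$ produces a genus $11$ surface. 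To reach arbitrary $g \geq 11$, add more obstacles to the base billiard, taking care that the cone-field condition of Step 1 is preserved when new small disks are inserted into an existing hyperbolic configuration (so that the map $n \mapsto g = n-1$ sweeps out every value $\geq 11$).

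\textbf{Step 3 (approximation and transfer of Anosov property).} Let the caps shrink: as their radial width tends to $0$, the geodesic flow of the embedded surface converges, on any compact piece of phase space corresponding to non-tangential collisions, to the billiard flow of Step 1. One then invokes the robustness of uniform hyperbolicity: an invariant, strictly invariant cone field for the billiard extends across each cap provided the cap's second fundamental form is estimated sharply enough, and such a cone field persists for all small enough cap widths. Since the surface is closed and the limit flow is uniformly hyperbolic, one concludes that for sufficiently thin caps the geodesic flow is Anosov.

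The main obstacle will be Step 1, namely producing circular billiards on $\mathbb{S}^2$ that are uniformly hyperbolic despite the focusing caused by the positive curvature of the sphere, while keeping the number of obstacles as low as $12$; the quantitative choice of radii and centres that realise this is what forces the particular bound ``genus at least $11$''. Step 3 is technical but essentially a spherical rewriting of Donnay--Pugh, the only real novelty being that free flight is along great circles rather than straight lines, so spherical trigonometry replaces Euclidean geometry in the cap-geometry estimates.
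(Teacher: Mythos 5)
Your outline coincides with the paper's strategy: prove a sufficient condition for a spherical billiard with round obstacles to be uniformly hyperbolic (Theorem~\ref{thmHypBilliard}, realised concretely by the icosahedral arrangement of $12$ equal disks of radius close to $\arctan(2)/2$), double the billiard table across $\mathbb S^2$ and join the two copies along tubes to produce a genus $n-1$ surface satisfying the hypotheses of the transfer Theorem~\ref{thmAnosov}, then flatten and deduce the Anosov property via a Riccati/cone criterion, adding small extra disks to reach every genus $\geq 11$. The one misplaced emphasis is that in the paper the heavy quantitative work is in the transfer step (Sections~\ref{sectFlatteningEuclidean}--\ref{sectEndProof}), not in producing the hyperbolic billiard, which follows in a few lines from the Riccati criterion once $A + H < \pi/2$.
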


\subsection{Approximating billiards by flattened surfaces}

Birkhoff~\cite{birkhoff1927dynamical} seems to be the first to suggest that billiards could be approximated by geodesic flows on flattened surfaces: he took the example of an ellipsoid whose vertical axis tends to zero, which converges to the billiard in an ellipse. Later, Arnold~\cite{arnold1963small} writes that smooth Sinaï billiards could be approximated by surfaces with nonpositive curvature, with Anosov geodesic flows. In~\cite{MR3508162}, we proved this fact for a large class of flattened surfaces.

In this paper, we prove a similar result for another class of objects. Consider a finite family of open disks $\Delta_i$, whose closures are disjoint, which have radii $r_i < \pi/2$, on the sphere $\mathbb S^2$: we will say that the billiard $D = \mathbb S^2 \setminus \bigcup_{i} \Delta_i$ is a \emph{spherical billiard with circular obstacles} (Figure~\ref{figBilliard}). Define the billiard flow in the following way: outside the obstacles, the particle follows the geodesics of the sphere with unit speed; when the particle hits an obstacle, it bounces, following the usual billiard reflection law.

\begin{figure}[h!]
\centering
\includegraphics[width=200pt]{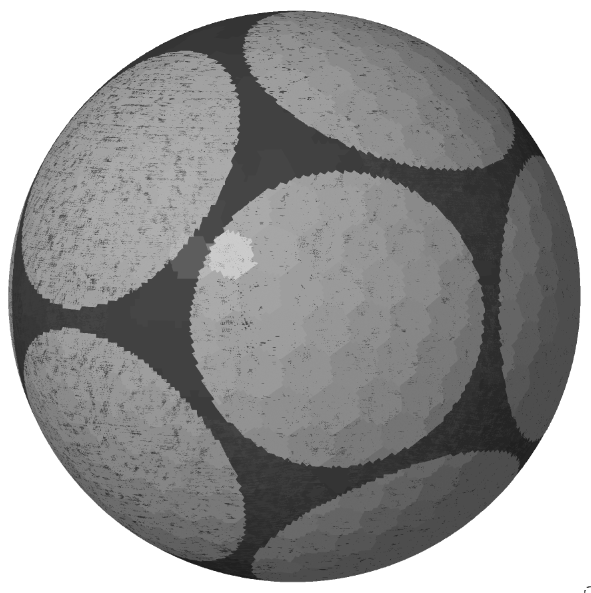}
\caption{A spherical billiard with $12$ obstacles.} \label{figBilliard}
\end{figure}

The \emph{horizon} $H$ of a spherical billiard is the length of the longest geodesic of $\mathbb S^2$ contained in the billard $D$.

The phase space $\Omega$ of the billiard is defined as $\Omega = T^1(\Int(D))$ (the unit tangent bundle of the interior of $D$). To define uniform hyperbolicity, we need to consider the set $\tilde \Omega$ of all $(x, v) \in T^1(\mathrm{Int}(D))$ such that the trajectory starting at $(x, v)$ does not contain any grazing collision (that is, each obstacle is reached with a nonzero angle).

We will use a definition of ``uniformly hyperbolic billiard'' which can be found in~\cite{MR2229799}:
\begin{definition} \label{defHyperbolicBilliard}
The billiard flow $\phi^t$ is \emph{uniformly hyperbolic} if at each point $x \in \tilde \Omega$, there exists a decomposition of $T_x\Omega$, stable under the flow,
\[ T_{x} \Omega = E_x^0 \oplus E_x^u \oplus E_x^s \]
where $E_x^0 = \mathbb R \left. \frac{d}{dt}\right|_{t=0} \phi^t(x)$, such that
\[ \lVert D\phi_x^t|_{E_x^s} \rVert \leq a \lambda^t, \  \lVert D\phi_x^{-t}|_{E_x^u} \rVert \leq a \lambda^{t} \]
(for some $a > 0$ and $\lambda \in (0,1)$, which do not depend on $x$).
\end{definition}	

Smooth flat billiards with negative curvature and finite horizon are known to be uniformly hyperbolic~\cite{MR0274721}, but it is not the case for spherical billiards, as the following example shows:

\begin{exemple}
Consider six disjoint disks on the sphere, with the same radius $r$, whose centers are the vertices of a regular octahedron which is inscribed in the sphere. If the radius $r$ is large enough, the billiard has finite horizon. However, there is a family of billiard trajectories which are parallel to the geodesic which is drawn on Figure~\ref{figOctahedron}, and thus, the billiard is not uniformly hyperbolic.
\end{exemple}

\begin{figure}
\centering
\includegraphics[width=200pt]{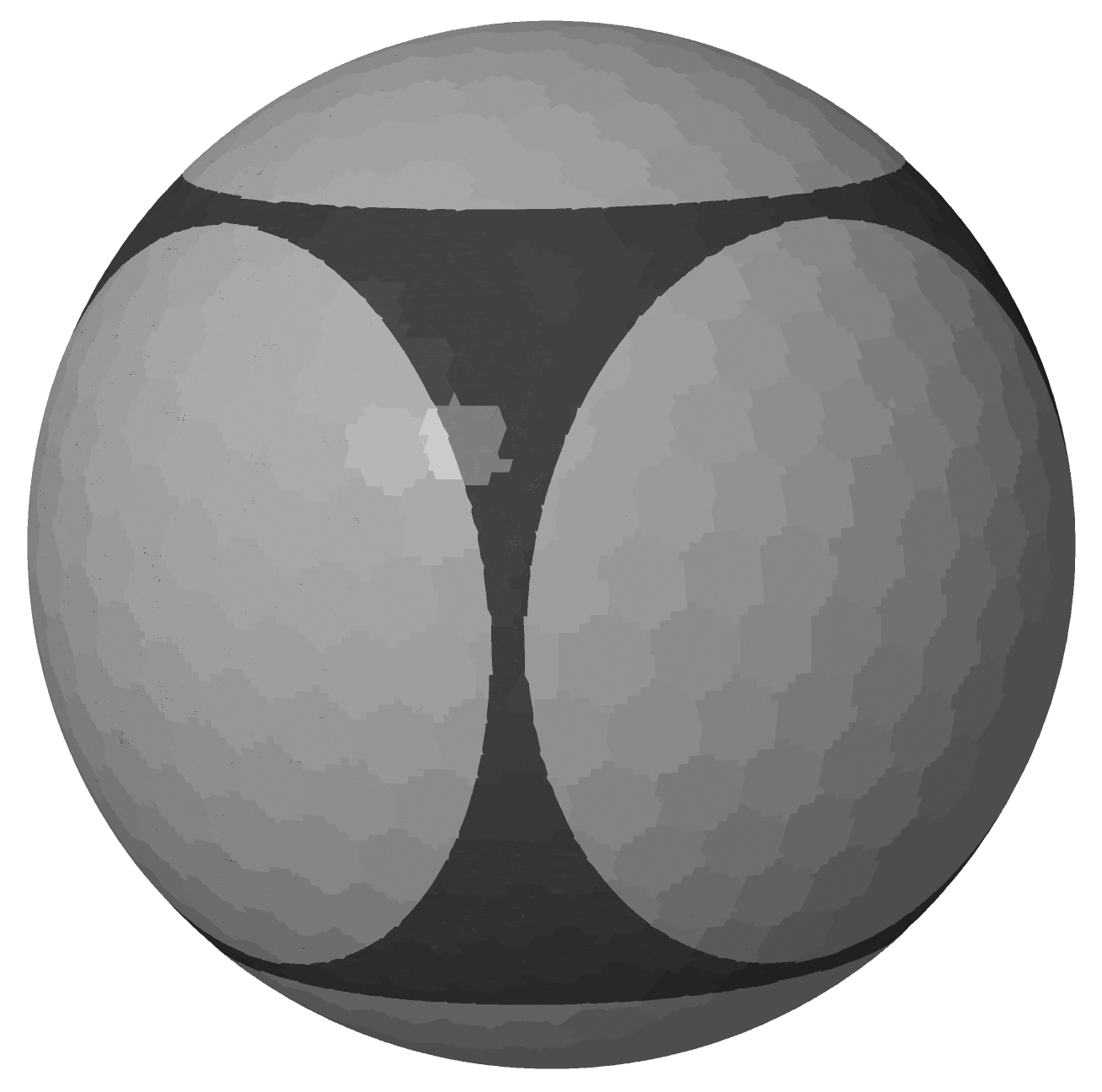}
\includegraphics[width=200pt]{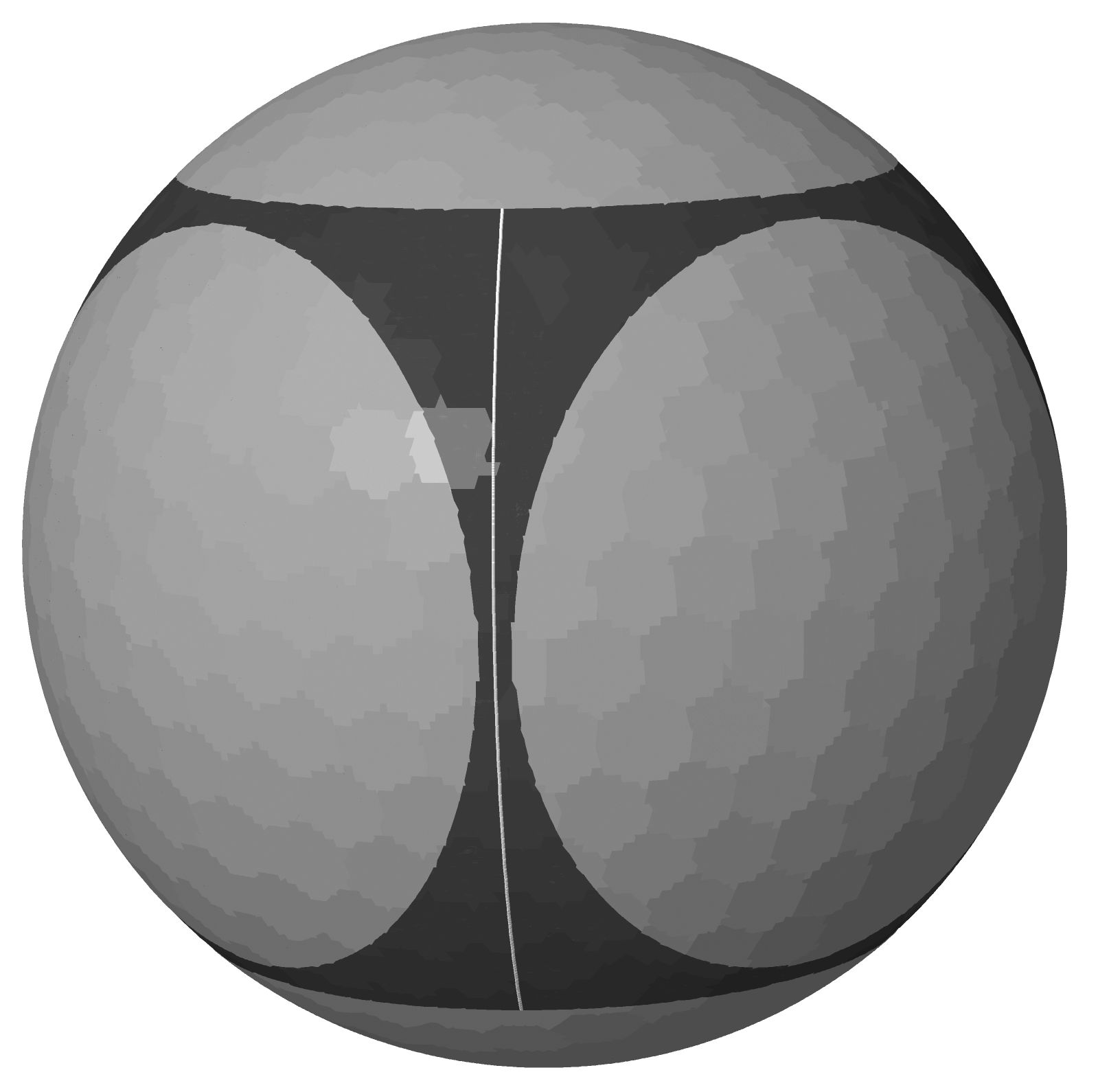}
\caption{The trajectory which is shown on the right-hand side is not hyperbolic.} \label{figOctahedron}
\end{figure}

However, we will show that there exists a spherical billiard with $12$ circular obstacles which is uniformly hyperbolic (see Figure~\ref{figBilliard}).

\subsection{Approximation by a closed surface}

In~\cite{MR3508162}, we have shown that, under some conditions, uniformly hyperbolic billiards may be approximated by smooth surfaces whose geodesic flow is Anosov. The main result of~\cite{MR3508162} only applies to flat billiards, but we will show in this paper that it is possible to approximate our spherical billiard by a surface in the ambiant space $\mathbb S^3$ such that the geodesic flow is Anosov (Theorem~\ref{thmAnosov}): see Figure~\ref{figSurface}.

\begin{figure}[h!]
\centering
\includegraphics[width=200pt]{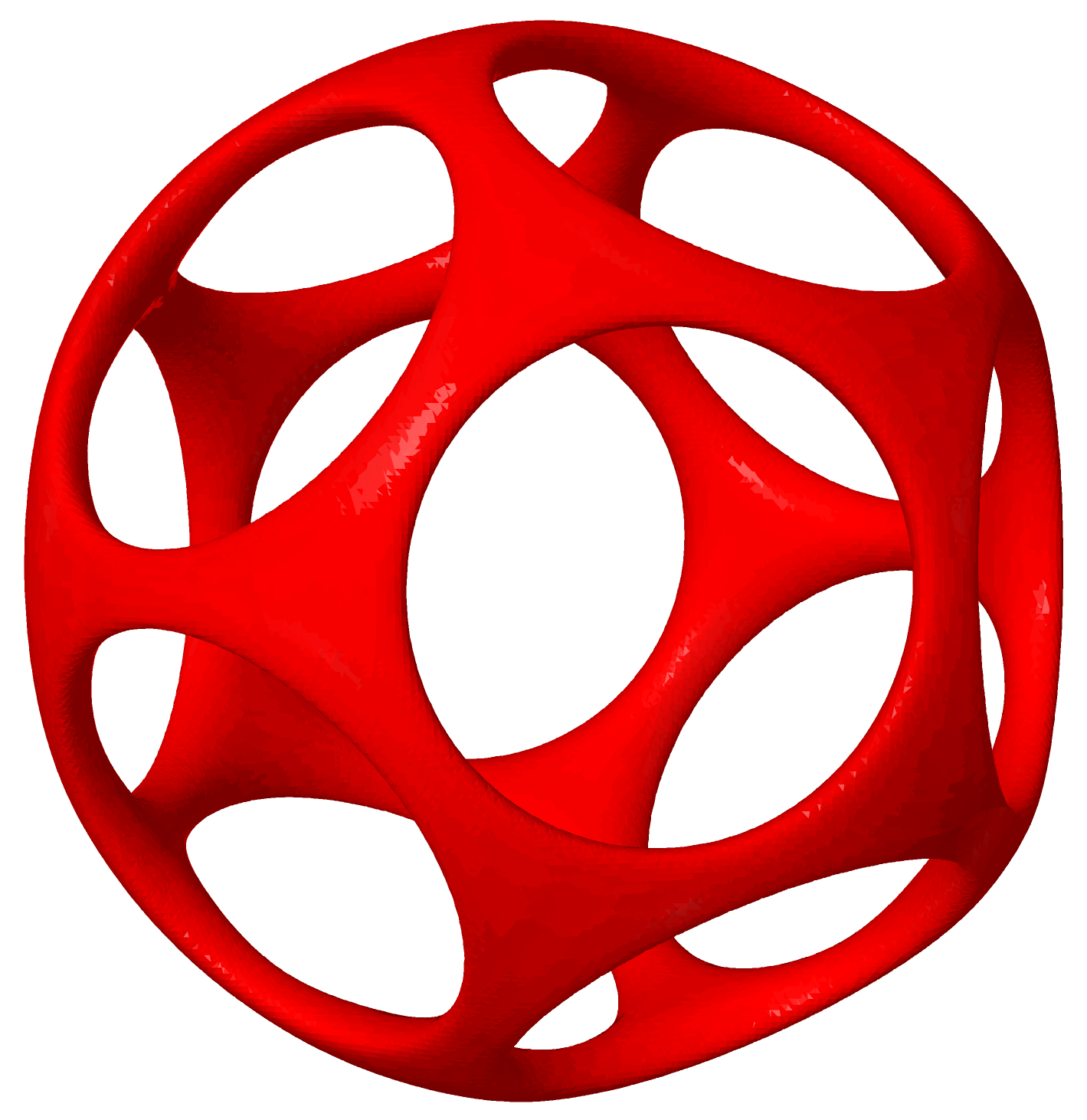}
\caption{An approximation of the spherical billiard by a surface of genus $11$. This surface is isometrically embedded in $\mathbb S^3$ and is seen here in stereographic projection. Its geodesic flow is Anosov.} \label{figSurface}
\end{figure}

It is tempting to try the same construction in the ambiant space is $\mathbb R^3$: however, we will see that in this framework, the geodesic flow of a surface which approximates a spherical billiard is \emph{never} Anosov (see Theorem~\ref{thmImpossible}). This result, which might seem surprising at first sight, is due to the accumulation of a high quantity of \emph{positive curvature} beside the negative curvature which appears near the boundary of the billiard.

\section{Main results}
%We consider the stereographic projection of $\mathbb S^3$ and the surface \[ \mathbb S^2 = \setof{(x, y, z) \in \mathbb S^3}{x^2 + y^2 + z^2 = 1}. \]
%
Consider a billiard $D$ in the sphere $\mathbb S^2$ with circular obstacles. Consider the largest obstacle and the length of its radius $A$ (for the spherical metric), and the horizon $H$ of the billiard. First we will prove:

\begin{thm} \label{thmHypBilliard}
If $H < \pi/2$ and $2 \tan (\pi/2 - A) > \tan (H)$, then $D$ is uniformly hyperbolic. In particular, it is the case if $A + H < \pi/2$.
\end{thm}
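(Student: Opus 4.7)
The plan is to apply the classical invariant-cone method for dispersing billiards, adapted to $\mathbb{S}^2$. I would track along each orbit the scalar curvature $\kappa$ of an orthogonal Jacobi field. During free flight the Jacobi equation $J''+J = 0$ on $\mathbb{S}^2$ gives $\kappa(t) = \cot(\phi_0 + t)$ with $\phi_0 := \mathrm{arccot}(\kappa(0))$; at each reflection the standard mirror formula reads
\[
\kappa_+ \;=\; \kappa_- + \frac{2\cot(r_i)}{\cos\alpha},
\]
since a spherical circle of radius $r_i$ has geodesic curvature $\cot(r_i)$. Because $r_i \leq A$ and $\cos\alpha \leq 1$, the collision boost is always at least $2\cot(A)$.

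Next I would seek an invariant cone of the form $\mathcal{C} = \{\kappa \geq c\}$ evaluated at post-collision points. The worst case is a maximal free flight of length $H$ starting from $\kappa_+ = c$: writing $\phi_0 = \mathrm{arccot}(c)$, cone invariance amounts to $2\cot(A) \geq \cot(\phi_0) - \cot(\phi_0 + H)$, i.e.,
\[
2\cot(A) \;\geq\; \frac{\sin H}{\sin(\phi_0)\sin(\phi_0 + H)},
\]
provided $\phi_0 + H < \pi/2$ so that $\kappa$ stays positive throughout the flight. On $(0, \pi/2 - H)$ the derivative of $\phi_0 \mapsto \sin(\phi_0)\sin(\phi_0 + H)$ equals $\sin(2\phi_0 + H) > 0$, so this product is strictly increasing and tends to $\cos H$ at the right endpoint. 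Hence the infimum over admissible $\phi_0$ of the right-hand side equals $\tan(H)$, and the strict hypothesis $2\tan(\pi/2 - A) > \tan(H)$ supplies a $c$ for which $\mathcal{C}$ is strictly invariant, uniformly over non-grazing orbits. The ``in particular'' clause is immediate: $A + H < \pi/2$ gives $\tan(\pi/2 - A) > \tan(H)$, hence a fortiori the required inequality.

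Finally I would translate cone invariance into Definition~\ref{defHyperbolicBilliard}. The Jacobi field grows between collisions by the factor $\sin(\phi_0 + t)/\sin(\phi_0) \geq 1$ (valid since $\phi_0 + t \leq \phi_0 + H < \pi/2$) and is unchanged at reflection; with $\kappa$ uniformly bounded below along every orbit of $\tilde\Omega$, this yields exponential expansion of unstable vectors at a uniform positive rate, and the time-reversed argument (propagating negative curvatures backwards) produces the stable distribution $E^s$. The main obstacle, as in all Sinai-type hyperbolicity proofs, is the uniformity of the constants $a$ and $\lambda$ on $\tilde\Omega$ in the presence of non-grazing but arbitrarily close to grazing reflections; the key observation that saves us is that the lower bound $2\cot(A)$ on the collision boost is already independent of $\cos\alpha$, so cone preservation does not degenerate even for orbits with near-tangential collisions.
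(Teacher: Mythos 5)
Your proposal follows essentially the same route as the paper: track the Riccati variable $u = j'/j$ along a billiard trajectory, use the free-flight solution $\cot(\phi_0 + t)$ on $\mathbb S^2$ and the collision jump $2\cot(r_i)/\cos\alpha \geq 2\tan(\pi/2 - A)$, and deduce a strictly invariant cone from the hypothesis $2\tan(\pi/2-A)>\tan H$. The paper streamlines the bookkeeping by invoking its ready-made criterion (Theorem~\ref{thmRiccatiBilliard}) with the canonical choice $u(t_k^+) = 0$ at the collision times, so that $u(t_{k+1}^-)\geq -\tan H$ and $u(t_{k+1}^+) \geq -\tan H + 2\tan(\pi/2 - A) > 0$ immediately, rather than optimizing over the family of cones $\{u \geq c\}$ and re-deriving uniform hyperbolicity from cone invariance as your plan would require.
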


We will now see how uniform hyperbolicity ``transfers'' to surfaces which approximate such a billiard.

% From a given billiard satisfying the assumptions of Theorem~\ref{thmHypBilliard} $\Delta_i$ of radii $r_i$, construct a surface in the following way:
%
%-- Consider radii $\tilde r_i$ slightly bigger than $r_i$ such that the circles $\tilde \Delta_i$ of radii $\tilde r_i$ remain disjoint.
%-- Consider the surface $S = \mathbb S^2 \setminus \bigcup_i \Delta_i$.
%-- Construct a surface $S_1$ which is the image of $S$ by a homothety of center $(0, 0, 0)$ in $\mathbb R^3$, of ratio $< 1$.
%-- Consider the image $S_2$ of $S_1$ by the inversion with respect to $\mathbb S^2$.
%-- Connect the holes in $S_1$ and $S_2$ by tubes which have a symmetry of revolution and are invariant by inversion with respect to $\mathbb S^2$.

Consider the stereographic projection of $\mathbb S^3$, and the surface \[ \mathbb S^2 = \setof{(x, y, z) \in \mathbb S^3}{x^2 + y^2 + z^2 = 1}. \] For $q \in \mathbb R^3$ denote by $\rho(q)$ the radial unit vector at $q$ (the unit vector which is positively colinear to the vector joining the origin to $q$) and by $\pi$ the natural projection of $\mathbb S^3$ (minus the poles) onto $\mathbb S^2$:
\[ \begin{aligned} \pi: \mathbb R^3 \setminus \{0\} & \to \mathbb S^2 \\ (x, y, z) & \to (x / (x^2 + y^2 + z^2), y / (x^2 + y^2 + z^2), z / (x^2 + y^2 + z^2)). \end{aligned} \]

We also consider the ``flattening map'' $f_\epsilon$ for $\epsilon \in (0,1)$:
\[ \begin{aligned} f_\epsilon: \mathbb R^3 \setminus \{0\} & \to \mathbb R^3 \setminus \{0\} \\ q & \to \epsilon q + (1-\epsilon) \pi(q). \end{aligned}. \]

The main theorem of this paper is the following:

\begin{thm} \label{thmAnosov}
Consider a spherical billiard $D$ with spherical obstacles $\Delta_i$, which satisfies $H < \pi/2$ and $2 \tan (\pi/2 - A) > \tan (H)$ (see the notations above), and a surface $\Sigma$ in $\mathbb S^3$ such that $\pi(\Sigma) = D$. We assume that:
\begin{enumerate}
\item (Transversality to the fibers of the projection.) For all $x \in \Sigma$, if $\pi(x) \not\in \partial D$, then $\rho \not\in T_x\Sigma$.
\item (Nonzero vertical principal curvature.) For all $x \in \Sigma$, if $\pi(x) \in \partial D$, then $\II_x(\rho) \neq 0$ (where $\II$ is the second fundamental form).
\item (Symmetry.) For all $i$, $\partial \Delta_i \subseteq \Sigma$; moreover, there is a neighborhood of $\partial \Delta_i$ in $\Sigma$ which is invariant by inversion with respect to $\mathbb S^2$, and by rotation in $\mathbb S^3$ around the axis $(0, c_i)$, where $c_i$ is the center of $\Delta_i$.
\end{enumerate}

Then there exists $\epsilon_0 \in (0,1)$ such that for all $\epsilon \leq \epsilon_0$, the geodesic flow on the flattened surface $\Sigma_\epsilon = f_\epsilon(\Sigma)$ is Anosov.
\end{thm}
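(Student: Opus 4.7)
The plan is to transfer the uniform hyperbolicity of the limit billiard, guaranteed by Theorem~\ref{thmHypBilliard}, to the geodesic flow on $\Sigma_\epsilon$ for $\epsilon$ sufficiently small, following the same general scheme as in \cite{MR3508162} but with the modifications needed to accommodate the positive background curvature of $\mathbb{S}^2$. Concretely, I would work in the symplectically reduced "Jacobi space" of $T(T^1\Sigma_\epsilon)$ modulo the flow direction (two-dimensional at each point) and construct an invariant family of unstable cones that are uniformly expanded by the differential of the flow; a time-reversal argument produces the stable cones.

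The geometry of $\Sigma_\epsilon$ decomposes into two regimes. Away from the circles $\partial\Delta_i$, hypothesis (1) implies that $\Sigma_\epsilon$ consists of two "sheets", each a graph over $\mathrm{Int}(D)$ that is uniformly $C^2$-close to $\mathbb{S}^2$ as $\epsilon \to 0$, so its intrinsic curvature there is close to $+1$. Near each $\partial \Delta_i$, hypothesis (3) together with hypothesis (2) forces $\Sigma_\epsilon$, on a neighborhood of width $O(\epsilon)$, to be a surface of revolution whose meridional principal curvature blows up like $1/\epsilon$ while the parallel curvature stays bounded, so its Gauss curvature becomes strongly negative in this collar. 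A geodesic on $\Sigma_\epsilon$ entering the collar exits it, up to an $O(\epsilon)$ error, along the billiard-reflected direction over the obstacle $\Delta_i$, and the effect on a normal Jacobi field $J$ is, to leading order, the standard billiard jump $\delta(J'/J) = -2\kappa_i/\cos\theta$, where $\kappa_i$ is the geodesic curvature of $\partial\Delta_i$ in $\mathbb{S}^2$ and $\theta$ is the incidence angle.

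Between two successive collar passages, the Jacobi equation on $\Sigma_\epsilon$ is $\ddot J + K_\epsilon J = 0$ with $K_\epsilon$ uniformly close to $1$, so the associated transport matrix along an arc of length at most $H + O(\epsilon) < \pi/2$ converges to the spherical transport matrix over distance $\leq H$; in particular no conjugate point is ever created within one sheet crossing. The core of the argument is then to verify that, as $\epsilon \to 0$, the composition (collar passage)$\circ$(sheet transport) acts on projective Jacobi coordinates exactly as (collision)$\circ$(free flight) in the billiard $D$, and hence preserves the invariant expanding cone field coming from the uniform hyperbolicity of $D$. For $\epsilon$ small enough, strict invariance and uniform expansion then survive by continuity on any orbit segment of $\Sigma_\epsilon$ staying in the "nongrazing" set.

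The main obstacle is to control the refocusing accumulated across each sheet, so that it does not overcome the dispersion produced at the collars; this is the quantitative role of the inequality $2\tan(\pi/2-A) > \tan H$, which is precisely the balance between the dispersion at one collision with an obstacle of radius $\leq A$ and the refocusing along a spherical arc of length $\leq H$, and it must be translated into strict invariance, uniform in $\epsilon$, of an explicit cone in the Jacobi plane. A secondary but nontrivial difficulty is the treatment of near-grazing orbits: it is handled by working on $\tilde\Omega$ (which excludes grazing collisions by definition), together with a uniform lower bound on $\cos\theta$ deduced from the transversality condition (1), so that the collar jump remains $O(\epsilon)$-close to the billiard jump with an $\epsilon$-independent constant on the trajectories under consideration.
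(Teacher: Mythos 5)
Your high-level plan (decompose $\Sigma_\epsilon$ into sheets close to $\mathbb S^2$ and tubes near the $\partial\Delta_i$, show that a tube crossing reproduces the billiard jump on the Riccati/projective Jacobi coordinate, and verify invariance of an expanding cone) is morally the same as the paper's approach, which runs the argument through an explicit Riccati criterion (Theorem~\ref{conditionConeRiccati}) rather than a cone field. However, there is a genuine gap in how you propose to handle grazing. You write that near-grazing orbits ``are handled by working on $\tilde\Omega$'' and that transversality gives ``a uniform lower bound on $\cos\theta$.'' Neither is available for the surface flow: the Anosov property must hold on \emph{all} of $T^1\Sigma_\epsilon$, and geodesics on $\Sigma_\epsilon$ can enter a tube at an arbitrarily shallow angle, so there is no uniform lower bound on the incidence angle and no nongrazing set to restrict to. This is precisely where the billiard analogy breaks down and where the paper invests most of its technical effort (Lemmas~\ref{lemmaTimeRc}, \ref{lemmaCurvatureAlmostNegative}, \ref{lemmaU}, and the dichotomy between ``almost avoided'' and non-avoided tube passages): for a near-grazing crossing the Riccati variable is shown to decrease only by a tiny amount, and if the geodesic dwells in the tube it necessarily traverses the zone $r\le r_c$ of large negative curvature ($K\le -1/\epsilon$), which forces $u$ to become very positive. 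Without an argument of this type, your cone invariance ``by continuity'' fails exactly on the near-grazing open set, which is nonempty for every $\epsilon>0$.

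A secondary imprecision: the collar region where the meridional curvature blows up is not of width $O(\epsilon)$; the tube used in the paper is a fixed $\delta$-neighborhood of $\partial\Delta_i$ with $\epsilon\ll\delta$, and the decomposition into the nearly spherical part and the strongly curved core $r\le r_c(\epsilon)$ lives inside it. You also need a quantitative bound on the time a geodesic can spend outside all tubes (the paper's ``almost a geodesic'' argument, Lemma~\ref{lemmaAlmostGeodesic} and Lemma~\ref{lemmaHorizon}) in order to convert the billiard horizon $H$ into a uniform bound for $\Sigma_\epsilon$; your sketch takes this for granted.
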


The assumptions of Theorem~\ref{thmAnosov} are very close to those which appear in the main theorem of~\cite{MR3508162} (which deals with the case of flat billiards), but the proof is made more difficult by the positive curvature of the sphere.

It is actually simple, from a given billiard, to construct a surface which satisfies these assumptions. More precisely:
\begin{thm} \label{thmConstruction}
If $D$ is a spherical billiard with $n$ circular obstacles, then there exists a surface $\Sigma$ of genus $n-1$ such that $\pi(\Sigma) = D$, which satisfies the assumptions 1, 2 and 3 of Theorem~\ref{thmAnosov}.
\end{thm}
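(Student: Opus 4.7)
My plan is to build $\Sigma$ as a pair of radial graphs over $D$, glued along surfaces of revolution around each boundary circle $\partial\Delta_i$. For each obstacle I first construct a local model near $\partial\Delta_i$: a surface of revolution around the axis $(0,c_i)$, generated by a smooth planar curve $\gamma$ meeting $\mathbb{S}^2$ perpendicularly at a point $p_0 \in \partial\Delta_i$ and symmetric under the radial inversion $\lambda\mapsto 1/\lambda$. An explicit choice in polar coordinates $(\lambda,\theta)$ in the meridian half-plane is $\lambda(s) = e^{bs}$, $\theta(s) = r_i + a s^2$ for small $a,b > 0$ and $|s| < \delta$; inversion corresponds to $s \mapsto -s$, and $\gamma'(0)$ is radial. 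A direct computation of the normal component of $\gamma''(0)$ gives $2a \neq 0$, so the generating curve has nonzero planar curvature, which will supply condition (2).

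Next I connect these local tubes across the interior of $D$ by a pair of radial graphs $q \mapsto e^{\pm h(q)} q$, where $h : D \to [0, \infty)$ is smooth on $\Int D$, positive there, and agrees with the local model in each tube region (so $h(q) = b\sqrt{(d(q,c_i) - r_i)/a}$ near $\partial\Delta_i$). Such an $h$ is produced by the standard partition-of-unity interpolation between the local data, and the resulting $\Sigma$ is a smooth closed embedded surface with $\pi(\Sigma) = D$. I then check the three assumptions: (1) on each graph, the tangent vectors are $v_i = \lambda h_i \rho + \lambda \partial_i q$, which together with $\rho$ form a determinant $\lambda^2 \neq 0$ in the basis $(\rho, \partial_1 q, \partial_2 q)$, so $\rho \notin T_x \Sigma$; (2) at a point of $\partial\Delta_i$, $\II_x(\rho, \rho)$ equals (up to the factor $1/b^2$) the planar curvature $2a$ of $\gamma$ at $p_0$, which is nonzero; (3) holds by construction.

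For the genus, $\Sigma$ is topologically two copies of $D$ (a sphere with $n$ open disks removed, so $\chi = 2 - n$) glued along their $n$ boundary circles, giving $\chi(\Sigma) = 4 - 2n$ and hence genus $n-1$. The main piece of bookkeeping is the smooth interpolation between the local tube parametrizations near each $\partial\Delta_i$ and the radial graph parametrizations elsewhere; this is routine, and no substantive obstacle arises.
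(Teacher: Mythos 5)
Your construction is essentially the same as the paper's: two inversion-symmetric copies of the billiard region, sitting just inside and just outside $\mathbb{S}^2$, joined by surfaces of revolution around each boundary circle $\partial\Delta_i$ (the paper realizes the two sheets as the scaled copy $(1-\epsilon)(\mathbb S^2 \setminus \bigcup_i\tilde\Delta_i)$ and its inversion, a special case of your radial graphs $q\mapsto e^{\pm h(q)}q$). Where the paper's proof just describes the construction, you also carry out the verification of hypotheses (1)--(3) and the Euler-characteristic computation, and those checks are correct.
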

\begin{proof}
We assume that the circular obstacles have radii $r_1, r_2, \ldots, r_n$.
Choose $\delta > 0$ such that the circles $\tilde \Delta_i$ of radii $r_i + \delta$, with the same centers as the obstacles, remain disjoint. Consider the image $S_1$ of $\mathbb S^2 \setminus \bigcup_i \tilde \Delta_i$ by a homothety (in $\mathbb R^3$) of center $(0,0,0)$ and ratio $1-\epsilon$, with a small $\epsilon > 0$. Consider the image $S_2$ of $S_1$ by inversion with respect to $\mathbb S^2$. Finally, construct symmetric tubes which connect the pairs of ``holes'' on the surfaces $S_1$ and $S_2$.
\end{proof}

Theorem~\ref{thmConstruction} will allow us to prove Theorem~\ref{thmGenus11} in Section~\ref{sectGenus11}. On the other hand, we prove the following:

\begin{thm} \label{thmImpossible}
Consider a spherical billiard $D$ with spherical obstacles $\Delta_i$, of radii $r_i < \pi/2$, and a surface $\Sigma$ in $\mathbb R^3$ such that $\pi(\Sigma) = D$ and $\partial \Delta_i \subseteq \Sigma$. Write $r_{\max} = \max_i (r_i)$ and, for all $\delta > 0$, denote by $V_i^\delta$ the open neighborhood of $\partial \Delta_i$ in $\Sigma$ which consists of all points at distance less than $\delta$ from $\partial \Delta_i$.
We will say that the surface $\Sigma$ is $\epsilon$-$C^1$-close to the billiard $D$ if there exists locally a parametrization $f_1$ of $\Sigma$ and a parametrization $f_2$ of $D$ such that $\norm{f_1 - f_2}_{C^2} \leq \epsilon$ for the $C^2$-norm in the Euclidean $\mathbb R^3$.

For all $\delta_1 \in (0, \pi/2 - r_{\max})$, there exists $\delta_2 > 0$, such that the geodesic flow of any surface satisfying the following conditions has conjugate points:
\begin{enumerate}
\item (Symmetry.) The neighborhood $V_i^{\delta_1}$ is invariant by rotation in $\mathbb R^3$ around the axis $(0, c_i)$, where $c_i$ is the center of $\Delta_i$.
\item (Approximation of the billiard.) The surface $\Sigma \setminus \bigcup_i V_i^{\delta_2}$ is $\delta_2$-$C^2$-close to $D$.
\end{enumerate}
\end{thm}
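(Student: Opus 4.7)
The key local observation is that the conditions $\pi(\Sigma) = D$ and $\partial \Delta_i \subseteq \Sigma$ force, at every point $P \in \partial \Delta_i$, the tangent plane $T_P\Sigma$ to contain the radial direction $\rho$. Indeed, near $\partial\Delta_i$ the preimage $\pi^{-1}(\overline D)$ is bounded by the cone through the origin and $\partial \Delta_i$; since $\Sigma$ stays on one side of this cone and is smooth, it must be tangent to this cone along $\partial \Delta_i$. Combined with condition~1, this means $\Sigma \cap V_i^{\delta_1}$ is a surface of revolution whose profile (in any meridian half-plane through the axis) passes through the point $P$ tangent to the ray from the origin --- which is transversal to the sphere. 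Because outside $V_i^{\delta_2}$ the profile is $C^2$-close to the spherical arc, whose tangent there is perpendicular to the ray, the profile must undergo a rotation of its tangent by $\pi/2$ within the narrow band $V_i^{\delta_2}$, concentrating large curvatures of both signs in a shrinking neighborhood of $\partial \Delta_i$.

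The plan is then to exhibit a geodesic with conjugate points. Fix a point $P \in \partial \Delta_i$ and consider the geodesic $\gamma$ of $\Sigma$ starting at $P$ with velocity tangent to $\partial \Delta_i$ at $P$. In $V_i^{\delta_1}$, Clairaut's relation for surfaces of revolution applies with constant $c = \sin r_i$ (the distance from $\partial\Delta_i$ to the axis), so $\gamma$ remains in the region $\{h \ge \sin r_i\}$, i.e., in the sphere-approximation part of $V_i^{\delta_1}$ and away from the tube. Outside $V_i^{\delta_1}$, condition~2 makes $\gamma$ arbitrarily $C^2$-close (as $\delta_2 \to 0$) to the great circle $G(P)$ of $\mathbb{S}^2$ through $P$ tangent to $\partial \Delta_i$ at $P$; $G(P)$ has length $2\pi$ on $\mathbb{S}^2$ and conjugate points at distance~$\pi$.

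The conclusion follows by Sturm comparison: along $\gamma$ the Jacobi equation $J'' + KJ = 0$ has $K$ arbitrarily close to $1$ wherever $\Sigma$ approximates the sphere, so by continuous dependence of Jacobi fields on the coefficient, $\gamma$ has a conjugate point within distance close to $\pi$ from $P$ --- provided it stays in the sphere-like region for that long. Securing this last proviso is the main obstacle: $\gamma$ could enter another obstacle's neighborhood $V_j^{\delta_1}$ before reaching length $\pi$. When $G(P)$ stays inside $D$ for an arc of length $\pi$ the argument closes immediately, and by varying $P$ over $\partial \Delta_i$ (and over the obstacles $i$) one obtains a large family of candidate great circles. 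When all these are blocked by other obstacles, one propagates the Jacobi field across successive $V_j^{\delta_1}$-crossings: in each crossing the $\pi/2$ profile turn happens on an arc of length $O(\delta_2)$ and, because $\int h''\,dt$ across the transition is bounded independently of $\delta_2$, contributes only a controlled phase perturbation to $J$. Chaining the positive-curvature rotations from the sphere-like intervals between successive tubes then produces the desired zero of $J$ once $\delta_2$ is sufficiently small.
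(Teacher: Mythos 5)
The mechanism the paper uses is quite different from yours, and I think your route has a genuine gap that I do not see how to close.

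Your first local observation ($T_P\Sigma$ contains $\rho$, hence the profile makes a turn of roughly $\pi/2$ within the band $V_i^{\delta_2}$) is correct and is the crucial geometric input. But the way you try to convert it into conjugate points does not work.

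\textbf{The Clairaut step is vacuous or worse.} With the axis through $c_i$, the Clairaut constant of a geodesic tangent to $\partial\Delta_i$ is the $r$--coordinate of $\partial\Delta_i$, which is the \emph{infimum} of $r$ on the whole tube. Thus the inequality $r\sin\psi = L \le r$ gives no information; it does not confine your geodesic to a ``sphere-like'' region. Moreover, since for $r_i\neq 0$ the circle $\partial\Delta_i$ is in general \emph{not} a geodesic of $\Sigma$ (the profile tangent at $P$ is $\rho$, not $e_z$, so $\gamma_r'(0)\neq 0$), the curve you start with immediately departs from $\partial\Delta_i$; the claim that it ``remains in the region $\{h\ge \sin r_i\}$ and away from the tube'' is not justified and is in general false.

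\textbf{The ``controlled phase perturbation'' across other tubes is the wrong sign, not just unproved.} A non-grazing crossing of a tube $V_j^{\delta_1}$ approximates a specular reflection off a convex obstacle: the net curvature integral $\int K\,dt$ across such a crossing is \emph{negative} (the deep negative-curvature region where the profile angle is still positive dominates, and the net Gauss--Bonnet count over the band is $2\pi\bigl(\cos\alpha(0)-\cos\alpha(\delta_2)\bigr)<0$), so the effect on the Riccati variable is a finite \emph{upward} (defocusing) jump. These jumps do not merely ``perturb the phase''; they actively prevent conjugate points. Indeed, if one only accounts for the billiard-reflection kicks and the spherical curvature $\approx 1$ between tubes, one gets exactly the dynamics of the spherical billiard, which Theorem~\ref{thmHypBilliard} shows is \emph{uniformly hyperbolic} and hence conjugate-point-free. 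So the very combination you propose to rely on is the one that cannot produce conjugate points; the argument would ``prove'' a false statement for the billiard itself.

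\textbf{What the paper actually does.} The conjugate points come from a completely local phenomenon inside a single tube. There is a unique radius $\gamma_r(\delta_3)$ in $(R,\gamma_r(\delta_2))$ where the profile tangent is radial ($\alpha=0$). The paper picks a geodesic whose Clairaut constant is exactly $L=\gamma_r(\delta_3)$, so that it \emph{turns around precisely on the parallel $s=\delta_3$}. On the near-grazing arc $s\in(\delta_3,\delta_2)$ the curvature is positive (the profile has $\alpha<0$ there), and the integral $\int K\,dt = \int \frac{-d(\cos\alpha)/ds}{\sqrt{\gamma_r^2-\gamma_r(\delta_3)^2}}\,ds$ has numerator bounded below by $1-\cos(r_i/2)$ but denominator of size $\sqrt{\gamma_r(\delta_2)^2-\gamma_r(\delta_3)^2}$, which tends to $0$ as $\delta_2\to 0$. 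This unbounded positive curvature drives the Riccati solution to $-\infty$ \emph{before the geodesic even leaves $V_i^{\delta_1}$}, independently of what happens at other obstacles. That is both the right geodesic to look at and the simplest route: no global propagation, no crossing estimates, no Sturm comparison over an arc of length $\pi$.

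In short: keep your local observation about the $\pi/2$ profile turn, but use it via Clairaut at a \emph{strictly} interior parallel (the $\alpha=0$ circle), not at $\partial\Delta_i$ itself. The conjugate points are produced inside one tube by a near-grazing geodesic; they are not produced by the sphere-like curvature between tubes.
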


In particular, for the surface $\Sigma_\epsilon$ in Theorem~\ref{thmAnosov}, endowed with the metric induced by $(\mathbb R^3, g_\mathrm{eucl})$, the geodesic flow always has conjugate points, and thus, it is never Anosov.

\paragraph{Structure of the paper.} The proof of Theorem~\ref{thmHypBilliard} relies on a theorem which is proved in~\cite{kourganoff2016anosov}: in Section~\ref{sectRiccati}, we recall the statement of this theorem and finish the proof of Theorem~\ref{thmHypBilliard}. In Section~\ref{sectFlatteningEuclidean}, we state and prove a lemma in the Euclidean ambiant space. This lemma is transposed to the spherical ambiant space in Section~\ref{sectCurvatureInTube}. The local dynamics near a circular obstacle are studied in Section~\ref{sectDynamicsInTube}. Section~\ref{sectEndProof} ends the proof of Theorem~\ref{thmAnosov} by studying the global dynamics. We prove Theorem~\ref{thmGenus11} in Section~\ref{sectGenus11}. Finally, we show Theorem~\ref{thmImpossible} in Section~\ref{sectImpossible}.

\section{The Riccati equation} \label{sectRiccati}

The Riccati equation is an important tool for the proofs of this paper. On a smooth Riemannian manifold $M$, the Riccati equation is a differential equation along a geodesic $\gamma: [a, b] \to \mathbb M$ given by:
\[ \dot u(t) = -K(t) - u(t)^2 \]
where $K$ is the Gaussian curvature of the surface.

The space of orthogonal Jacobi fields on the geodesic $\gamma$ has dimension $1$, thus it is possible to consider any orthogonal Jacobi field as a function $j : [a, b] \to \mathbb R$ (by choosing an orientation of the normal bundle of $\gamma$). In this case, it is well-known that $j$ satisfies the equation $j''(t) = - K(t) j(t)$. A short calculation then shows that $u = j' / j$ satisfies the Riccati equation whenever $j \neq 0$.

The following criterion is an improvement of a statement which appears in~\cite{donnay2003anosov}; a complete proof may be found in~\cite{kourganoff2016anosov}.
\begin{thm} \label{conditionConeRiccati}
Let $M$ be a closed surface. Assume that there exist $m > 0$ and $C > c > 0$ such that for any geodesic $\gamma: \mathbb R \to M$, there exists an increasing sequence of times $(t_k)_{k \in \mathbb Z} \in \mathbb R^{\mathbb Z}$ with $c \leq t_{k+1} - t_k \leq C$, such that the solution $u$ of the Riccati equation with initial condition $u(t_k) = 0$ is defined on the interval $[t_k, t_{k+1}]$, and $u(t_{k+1}) > m$. Then the geodesic flow $\phi_t : T^1 M \to T^1 M$ is Anosov.
\end{thm}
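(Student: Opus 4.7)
The plan is to prove the Anosov property by constructing a continuous, strictly invariant cone field in the transverse bundle of the geodesic flow, which I then upgrade to uniform hyperbolicity via the symplectic (Wronskian-preserving) nature of the linearized flow, in the spirit of Wojtkowski's cone techniques. First I would identify, at each $x = (p, v) \in T^1 M$, the quotient $T_x T^1 M / \mathbb R \dot\gamma$ with the plane of orthogonal Jacobi field data $(J(0), J'(0)) \in \mathbb R^2$; on this plane the linearized flow is symplectic, and the projective coordinate $u = J'/J$ evolves by the Riccati equation $\dot u = -K(t) - u^2$. The quadratic term $-u^2$ forbids blow-up of $u$ to $+\infty$, so the only obstruction to continued existence is a blow-up to $-\infty$, corresponding to a conjugate point along $\gamma$.

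Next I would define the unstable cone $C^u_x = \{u \geq 0\}$ in the Jacobi plane (equivalently, $\{J J' \geq 0\}$ once the projective identification is taken into account) and verify its strict invariance. Given any Riccati solution with initial value $u(t_k) \geq 0$, the comparison principle (two solutions of $\dot u = -K - u^2$ cannot cross, by ODE uniqueness) together with the hypothesis that the zero-start solution exists on $[t_k, t_{k+1}]$ and exceeds $m$ at time $t_{k+1}$ force $u$ to be itself defined on the whole interval and to satisfy $u(t_{k+1}) > m$. Therefore $D\phi_{t_{k+1}-t_k}$ carries $C^u_x$ into $\{u > m\}$, strictly inside the interior of the cone at the image point, with uniformly bounded waiting time $t_{k+1} - t_k \in [c, C]$. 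A symmetric argument for the reversed flow yields a stable cone $C^s_x = \{u \leq 0\}$, strictly invariant under $D\phi_{-t}$.

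I would then invoke a standard Wojtkowski-style conclusion: for a smooth flow on a compact manifold whose transverse linearized action is symplectic and which preserves a continuous cone field strictly with uniformly bounded waiting times, the flow is uniformly hyperbolic on the transverse bundle. Together with the flow direction this yields the required splitting $T_x \Omega = E_x^0 \oplus E_x^u \oplus E_x^s$ with uniform exponential rates, i.e.\ the Anosov property. The compactness of $M$ ensures continuity of all constructions and uniformity of all constants.

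The most delicate step is the last one: converting qualitative strict invariance into a quantitative uniform exponential expansion rate. The norm growth of a Jacobi field over one period is $\exp(\int_{t_k}^{t_{k+1}} u)$, and although $u(t_k) > m$ at each checkpoint, $u$ may briefly dip below zero in between, so a single period need not yield pointwise expansion. The fix is to bound $u$ from below using compactness of $M$ (a uniform curvature bound limits how far $u$ can dip on any interval of length at most $C$), and then to combine this with the recurring boost $u(t_k) > m$ at consecutive checkpoints to extract uniform exponential growth over many periods, as carried out in~\cite{kourganoff2016anosov}.
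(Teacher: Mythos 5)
The paper does not actually prove this theorem; it only cites \cite{kourganoff2016anosov} (remarking that the statement improves one from Donnay--Pugh). So there is no in-paper argument to compare against, and your proposal can only be evaluated on its own terms.

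The first half of your argument is sound: identifying the transverse bundle with Jacobi data $(J,J')$, passing to the projective coordinate $u=J'/J$, and the comparison step --- if $u(t_k)\geq 0$ then, since two Riccati solutions cannot cross and the zero-start solution $u_0$ exists on $[t_k,t_{k+1}]$, the solution $u$ stays above $u_0$, hence does not blow up to $-\infty$, cannot blow up to $+\infty$ in forward time, and satisfies $u(t_{k+1})>m$. This does give strict contraction of the constant cone $\{JJ'\geq 0\}$ under the single linearized return map from $t_k$ to $t_{k+1}$.

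The genuine gap is in the hand-off to "a standard Wojtkowski-style conclusion." That family of theorems requires a cone field that is \emph{weakly} $D\phi_t$-invariant for \emph{every} $t\geq 0$ (so the estimate can be iterated), and strictly invariant after a time $T$ that is uniform in the base point. The constant cone $\{u\geq 0\}$ satisfies neither condition here: the hypothesis constrains $u$ only at the checkpoint times $t_k$, which depend on the geodesic and are not synchronized with any fixed flow time. Starting at a generic time $s\in(t_k,t_{k+1})$ with $u(s)=0$, nothing in the hypothesis prevents $u$ from becoming negative at the next checkpoint, nor even from blowing up to $-\infty$ within the following window; in particular a bounded‑curvature argument cannot save this, since on a stretch of constant curvature $K_0>0$ the zero-start Riccati solution reaches $-\infty$ after time $\pi/(2\sqrt{K_0})$, which is not controlled relative to $C$. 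Your last paragraph gestures at a difficulty, but diagnoses it as a quantitative-expansion issue rather than a failure of cone invariance, and the proposed remedy ("bound $u$ from below using compactness of $M$") is exactly the step that fails. What is actually needed is either to first extract a conjugate-point-free statement and build the global unstable Riccati solution as a monotone limit of solutions started at $t_{-n}$, or to run a cocycle argument on the discrete sequence of symplectic return maps $L_k$ (which do strictly preserve the cone) while controlling the bounded "overhead" between an arbitrary time and the next checkpoint. You end by deferring to \cite{kourganoff2016anosov} for "the fix", which is also all the paper does, but as a standalone proof the sketch does not close.
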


Now, consider a spherical billiard $D$ and a billiard trajectory $\gamma$. It is possible to consider a small variation of $\gamma$, which is also called a Jacobi field, and consider $u = j'/j$, as in the case of a geodesic flow. Thus, there is a natural generalization of the Riccati equation. We say that $u$ is a solution of this equation if:
\begin{enumerate}
\item in the interval between two collisions, $\dot u(t) = -K(t) - u(t)^2$ ;
\item when the particle bounces against the boundary at a time $t$, $u$ undergoes a discontinuity: we have $u(t^+) = u(t^-) - \frac{2 \kappa}{\sin \theta}$, where $\kappa$ is the geodesic curvature of the boundary of $D$, and $\theta$ is the angle of incidence\footnote{The notation $u(t^+)$ stands for $\lim_{h \to 0, h > 0} u(t + h)$, and likewise $u(t^-) = \lim_{h \to 0, h < 0} u(t + h)$.}.
\end{enumerate}

With this generalized Riccati equation, the following theorem holds (its proof may be found in~\cite{kourganoff2016anosov}):

\begin{thm} \label{thmRiccatiBilliard}
Consider a spherical billiard $D$. Assume that there exist positive constants $A, m, c$ and $C$ such that for any trajectory $\gamma$ with $\gamma(0) \in \tilde\Omega$, there exists an increasing sequence of times $(t_k)_{k \in \mathbb Z} \in \mathbb R^{\mathbb Z}$ satisfying $c \leq t_{k+1} - t_k \leq C$, such that for any $k \in \mathbb Z$, the solution $u$ of the Riccati equation with initial condition $u(t_k^+) = 0$ satisfies $u(t^+) \geq -A$ for all $t \in [t_k, t_{k+1}]$, and $u(t_{k+1}^+) > m$. Also assume that for each $k \in \mathbb Z$, there is no collision in the interval $(t_k - c, t_k)$, and at most one collision in the interval $(t_k, t_{k+1}]$. Then the billiard flow on $D$ is uniformly hyperbolic (see Definition~\ref{defHyperbolicBilliard}).
\end{thm}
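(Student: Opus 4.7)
The plan is to use a Wojtkowski-style approach with invariant cone fields parameterized by the Riccati variable, following the blueprint of Theorem~\ref{conditionConeRiccati} in the smooth case but adapted to handle collision jumps. At each point $x \in \tilde\Omega$ I identify the space of Jacobi fields orthogonal to the flow with $\mathbb R^2$ via $J \mapsto (j, j')$, and introduce the unstable cone $\mathcal C^u_x = \{(j, j') : j j' \geq 0\}$, i.e.\ the set where the Riccati variable $u = j'/j$ lies in $[0, +\infty]$. The goal is to show that $\mathcal C^u$ is strictly forward-invariant over each return interval $[t_k^+, t_{k+1}^+]$ with a uniform gap, and that the billiard derivative expands vectors in $\mathcal C^u$ by a definite factor; the analogous argument for the reversed flow yields a stable cone, and together with the uniform bounds $c \leq t_{k+1}-t_k \leq C$ these ingredients deliver the splitting $T_x\Omega = E^0_x \oplus E^u_x \oplus E^s_x$ of Definition~\ref{defHyperbolicBilliard}.

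The key mechanism for invariance is the comparison principle for the Riccati equation. If $u_1, u_2$ are two solutions of $\dot u = -K - u^2$ on an interval containing no collisions, then $v = u_2 - u_1$ satisfies the linear equation $\dot v = -(u_1 + u_2)\, v$, so $\mathrm{sign}(v)$ is preserved as long as neither solution blows up. At a simultaneous collision both solutions undergo the same additive jump $-2\kappa/\sin\theta$, so $v$ is continuous across collisions and its sign persists globally. I take $u_1$ to be the distinguished solution of the hypothesis, with $u_1(t_k^+) = 0$, $u_1 \geq -A$ on $[t_k^+, t_{k+1}^+]$, and $u_1(t_{k+1}^+) > m$. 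For any $u_2$ with $u_2(t_k^+) \geq 0$, the comparison forces $u_2 \geq u_1 \geq -A$ throughout, so $u_2$ never blows up and $u_2(t_{k+1}^+) \geq u_1(t_{k+1}^+) > m$. This is strict forward-invariance of $\mathcal C^u$ into its interior $\{u > m\}$, with a uniform gap $m$.

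From strict cone invariance with uniformly bounded return times, uniform hyperbolicity follows by a classical argument: the derivative of the billiard flow over $[t_k^+, t_{k+1}^+]$ strictly contracts the Hilbert projective metric on $\mathcal C^u$ by a definite factor, and the Birkhoff--Hopf theorem then yields both a unique invariant direction inside $\mathcal C^u$ and a uniform exponential expansion rate for vectors in $\mathcal C^u$. The invariant bundle $E^u_x$ is recovered as the nested intersection of backward images $D\phi^{-t}(\mathcal C^u_{\phi^t(x)})$ as $t \to \infty$ along the return times; the same construction for the time-reversed flow produces $E^s_x$, and these together give the constants $a, \lambda$ of Definition~\ref{defHyperbolicBilliard}. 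The main obstacle is the singular nature of the billiard flow: the Riccati equation has jumps at collisions and blows up at grazing orbits, which is why the hypothesis restricts everything to $\tilde\Omega$ and explicitly controls where collisions can occur within each interval (no collision in $(t_k-c, t_k)$ and at most one in $(t_k, t_{k+1}]$). One must verify that the comparison and expansion estimates are genuinely uniform as trajectories approach grazing, and this is precisely what the uniform constants $c, C, A, m$ in the hypothesis are designed to ensure.
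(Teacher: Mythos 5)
The paper does not prove Theorem~\ref{thmRiccatiBilliard} at all: it cites an external reference (\cite{kourganoff2016anosov}) for the proof. So there is no in-text argument to compare against, and I can only assess your sketch on its own merits.

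Your Riccati comparison is the right core mechanism, and the computation that $v = u_2 - u_1$ solves $\dot v = -(u_1+u_2)v$ between collisions and is continuous across them is correct; this does give forward invariance of $\{u \geq 0\}$ into $\{u > m\}$ over each return interval, and also guarantees $u_2 \geq u_1 \geq -A$ so that no blowup occurs. The genuine gap is the passage from strict cone invariance to uniform hyperbolicity. You assert that the return derivative ``strictly contracts the Hilbert projective metric on $\mathcal C^u$ by a definite factor,'' invoking Birkhoff--Hopf, but the Birkhoff contraction factor is $\tanh(\Delta/4)$ where $\Delta$ is the Hilbert diameter of the image cone, and this requires the image to be uniformly separated from \emph{both} boundary rays of $\{u \geq 0\}$. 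Your estimate separates it from $u=0$ by the gap $m$, but gives no upper bound: the hypothesis provides $u(t_{k+1}^+) > m$ with $u(t_{k+1}^+)$ possibly huge (the jump $-2\kappa/\sin\theta$ diverges near grazing, and $\tilde\Omega$ is not uniformly bounded away from grazing). Hence the image cone has unbounded Hilbert diameter over $\tilde\Omega$, the contraction factor tends to $1$, and the Birkhoff--Hopf step does not deliver the uniform constants $a, \lambda$ of Definition~\ref{defHyperbolicBilliard}. A direct expansion estimate fails for the same reason: $j(t_{k+1})/j(t_k) = \exp\left(\int_{t_k}^{t_{k+1}} u\,dt\right)$ is only bounded below by $e^{-AC}$ from $u \geq -A$, which is not an expansion. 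The structural hypothesis you treated as a technicality --- no collision in $(t_k - c, t_k)$, at most one collision in $(t_k, t_{k+1}]$ --- is exactly what one must exploit to renormalize Jacobi fields at the times $t_k$ (where the flow is uniformly away from collisions and the norms $\|D\phi^t\|$ are controllable) and convert the cone invariance into a genuine exponential rate; without using it, the argument does not close.

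A minor but real slip: you describe $E^u_x$ as the nested intersection of $D\phi^{-t}(\mathcal C^u_{\phi^t(x)})$ as $t \to \infty$. Pulling back cones from the future is the construction of the \emph{stable} bundle. With a forward-invariant $\mathcal C^u$, the unstable bundle is obtained by pushing forward cones from the past, $E^u_x = \bigcap_{t \geq 0} D\phi^{t}\bigl(\mathcal C^u_{\phi^{-t}(x)}\bigr)$.
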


Knowing this theorem, we are ready to prove Theorem~\ref{thmHypBilliard}.

\begin{proof}[Proof of Theorem~\ref{thmHypBilliard}]
Since the obstacles are circles of radius at most $A$, their geodesic curvature is at least $\tan (\pi/2 - A)$. We define $m = 2 \tan (\pi/2 - A) - \tan (H)$ (thus $m > 0$).

Consider a billiard trajectory $(q(t), p(t))_{t \in [0, 2H]}$ in the billiard $D$, with collision times $(t_k)_{k \in \mathbb Z}$. Fix $k$ and consider the solution $u$ of the Riccati equation along this trajectory with $u(t_k) = 0$. We want to show that $u(t_{k+1}) > \delta$.

On $(t_k, t_{k+1})$, the solution $u$ satisfies $u'(t) = -1 - u(t)^2$, so $u(t_{k+1}^-) = -\tan(t_{k+1} - t_k) \geq -\tan(H)$.

Knowing that $u(t_{k+1}^+) = u(t_{k+1}^-) - \frac{2 \kappa}{\sin \theta}$, we obtain $u(t_{k+1}^+) \geq -\tan(H) + 2 \tan (\pi/2 - A) = m$. According to Theorem~\ref{thmRiccatiBilliard}, this concludes the proof.
\end{proof}

\section{Flattening a curve in the Euclidean plane} \label{sectFlatteningEuclidean}

\begin{lemme} \label{lemmaHighCurvatureFlat}
Consider a smooth curve $c : (-a, a) \to \mathbb R^2$ with unit speed, and write its coordinates $c(t) = (r(t), z(t))$. Assume that $0 \in (a, b)$, $c(0) = R$, $c'(0) = (0,1)$, and the curvature of $c$ at $0$ is nonzero. For all $t \in (a, b)$, also assume that $c(t) \geq R$,
$r(-t) = r(t)$ and $z(-t) = -z(t)$. Consider the flattened curve $c^\epsilon(t) = (r(t), \epsilon z(t))$ and its curvature $k^\epsilon(t)$. The unit tangent vector to $c^\epsilon(t)$ is $T^\epsilon(t) = (T^\epsilon_r(t), T^\epsilon_z(t))$, and the normal vector is $(T_z^\epsilon(t), -T_r^\epsilon(t))$.

Then there exists $m_0 > 0$ such that for all $m \leq m_0$, there exists $\epsilon_0 > 0$ such that for all $\epsilon \leq \epsilon_0$, there exists $t_c$ such that
\begin{enumerate}
\item for all $t \in (0, t_c)$, $T^\epsilon_z(t) \geq m$ and $k^\epsilon(t) \geq m^4 / \epsilon^2$,
\item for all $t \in (t_c, m)$, $T^\epsilon_z(t) \leq m$ and $k^\epsilon(t) \geq 0$.
\end{enumerate}

 %its arc length $s(t)$ (so that $\abs{ds/dt} = \norm{c_\epsilon'(t)}$). The unit tangent vector to the curve $c(t)$ is $T(t) = (\cos \alpha(t), \sin \alpha(t))$, and the unit tangent vector to $c_\epsilon(t)$ is $T_\epsilon(t)$.
\end{lemme}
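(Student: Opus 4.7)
The whole argument rests on a single consequence of the hypotheses: $r''(0) > 0$. Indeed, the symmetries $r(-t) = r(t)$ and $z(-t) = -z(t)$ force $r'(0) = 0$ and $z''(0) = 0$; unit speed $r'^2 + z'^2 = 1$ then gives $z'(0) = 1$; and in this parametrization the curvature at $0$ equals $|r''(0)|$, hence is nonzero by assumption. Since $r(t) \geq R = r(0)$ locally, $0$ is a local minimum of $r$, so $r''(0) > 0$. All subsequent estimates flow from Taylor expansions anchored at this value.

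The plan is to define $t_c$ implicitly by the equation $T^\epsilon_z(t_c) = m$, where
\[ T^\epsilon_z(t) = \frac{\epsilon z'(t)}{\sqrt{r'(t)^2 + \epsilon^2 z'(t)^2}}, \]
which rearranges to $m \, r'(t_c) = \epsilon \, z'(t_c) \sqrt{1-m^2}$. Plugging in the expansions $r'(t) = r''(0) t + O(t^3)$ and $z'(t) = 1 + O(t^2)$ yields $t_c \sim \epsilon \sqrt{1-m^2}/(m\, r''(0))$, so $t_c \in (0, m)$ once $\epsilon$ is small compared with $m^2$. On $[0, m]$ (with $m$ small) $r'$ is strictly increasing and $z'$ stays close to $1$, so $T^\epsilon_z$ is strictly decreasing there, which immediately delivers $T^\epsilon_z \geq m$ on $(0, t_c)$ and $T^\epsilon_z \leq m$ on $(t_c, m)$.

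For the curvature with the prescribed normal $(T^\epsilon_z, -T^\epsilon_r)$, a direct computation gives
\[ k^\epsilon(t) = \frac{\epsilon \, \bigl(z'(t) r''(t) - r'(t) z''(t)\bigr)}{\bigl(r'(t)^2 + \epsilon^2 z'(t)^2\bigr)^{3/2}}. \]
The numerator equals $\epsilon (r''(0) + O(t))$ and therefore remains $\geq \epsilon r''(0)/2 > 0$ on all of $[0, m]$ provided $m$ is sufficiently small; this already yields $k^\epsilon \geq 0$ on $(t_c, m)$. For the stronger bound on $(0, t_c)$, note that on this interval $r'(t)^2 \leq r'(t_c)^2 = \epsilon^2 (1-m^2) z'(t_c)^2 / m^2$ and $z'(t) \leq 1$, so the denominator is bounded above by $(\epsilon^2/m^2)^{3/2} = \epsilon^3/m^3$. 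Combining,
\[ k^\epsilon(t) \;\geq\; \frac{\epsilon \, r''(0)/2}{\epsilon^3/m^3} \;=\; \frac{r''(0) \, m^3}{2 \epsilon^2}, \]
which exceeds $m^4/\epsilon^2$ as soon as $m \leq m_0 := r''(0)/2$.

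The main technical point to be careful about is keeping all Taylor-remainder constants uniform as $\epsilon \to 0$: the monotonicity of $T^\epsilon_z$ and the positivity of $z' r'' - r' z''$ must be controlled on the whole interval $[0, m]$, not just on the (much smaller, shrinking with $\epsilon$) interval $[0, t_c]$. This is precisely why $m_0$ is forced to depend only on the fixed curve $c$ and not on $\epsilon$; the parameter $\epsilon_0 = \epsilon_0(m)$ is then chosen afterwards, small enough to ensure $t_c < m$ and to absorb the lower-order corrections in the estimate of $t_c$.
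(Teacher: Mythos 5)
Your proof is correct and takes essentially the same route as the paper's: define $t_c$ by the implicit equation $T^\epsilon_z(t_c)=m$, use the explicit curvature formula (the paper writes it via the angle $\alpha^\epsilon$ and the relation $\tan\alpha^\epsilon=\epsilon\tan\alpha^1$, you write it in Cartesian coordinates, but after differentiation they are the same identity), bound the factor $(ds^\epsilon/dt)^3$ by $\epsilon^3/m^3$ on $\{T^\epsilon_z\geq m\}$, and use the nonvanishing of the original curvature $k^1(0)=r''(0)$ to close. The derivation of $r''(0)>0$ from the symmetry plus $r\geq R$ is a nice explicit version of what the paper assumes implicitly when it says ``we may assume $\alpha$ is decreasing on $(-m,m)$''.
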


\begin{proof}
We will write $T^\epsilon(t) = (\cos \alpha^\epsilon(t), \sin \alpha^\epsilon(t))$ and $s^\epsilon$ a parametrization by arc length of $c^\epsilon$ (such that $s^\epsilon(0) = 0$ and $ds^\epsilon/dt = \norm{dc^\epsilon/dt}$).

Since the curvature of $c$ at $t=0$ is nonzero, we may assume that the angle $t \mapsto \alpha(t)$ is decreasing on the interval $(-m, m)$ (reducing $m$ if necessary). We may also assume that $\alpha(t) \in (0, \pi)$.

We have:
\[ T^\epsilon(t) = \frac{(\cos (\alpha^1(t)), \epsilon \sin (\alpha^1(t)))}{\sqrt{\cos^2 (\alpha^1(t)) + \epsilon^2 \sin^2 (\alpha^1(t))}}. \]

Thus, \begin{equation} \label{eq1} \tan (\alpha^\epsilon(t)) = \epsilon \tan \alpha^1(t) \end{equation} for $t \in (0, m)$.

In particular, $t \mapsto \alpha^\epsilon(t)$ is decreasing (thus $k^\epsilon(t) \geq 0$), so for $\epsilon$ small enough, there exists $t_c \in (0, m)$ such that $\sin \alpha^\epsilon(t_c) = m$. Thus for all $t \in (t_c, m)$, we have $T^\epsilon_z(t) \leq m$, whereas for all $t \in (0, t_c)$, we have $T^\epsilon_z(t) \geq m$.

We will now show that $T^\epsilon_z(t) \geq m$ implies $k^\epsilon(t) \geq m^4/\epsilon^2$, which will conclude the proof of the lemma.

After differentiation of~(\ref{eq1}), we obtain
\[ \begin{aligned} \frac{d\alpha^\epsilon}{dt} (1 + \tan^2 (\alpha^\epsilon(t))) & = \epsilon \frac{d\alpha^1}{dt} \frac{1}{\cos^2 (\alpha^1(t))}
\\ \frac{d\alpha^\epsilon}{dt} (1 + \epsilon^2 \tan^2 (\alpha^1(t))) & = \epsilon \frac{d\alpha^1}{dt} \frac{1}{\cos^2 (\alpha^1(t))}
\\ \frac{d\alpha^\epsilon}{dt} & = \frac{d\alpha^1}{dt} \frac{\epsilon}{\cos^2 (\alpha^1(t)) + \epsilon^2 \sin^2 (\alpha^1(t))}
\\ \frac{d\alpha^\epsilon}{dt} & = \frac{d\alpha^1}{dt} \frac{\epsilon}{(ds^\epsilon/dt)^2}
\end{aligned} \]

Thus the curvature of $c_\epsilon$ is
\[ k^\epsilon(t) = \frac{d\alpha^\epsilon}{ds} = \frac{d\alpha^\epsilon}{dt} \cdot \frac{1}{dt/ds^\epsilon} = k^1(t) \cdot \epsilon \cdot \frac{1}{(dt/ds^\epsilon)^3}. \]

Assuming that $T^\epsilon_z \geq m$, we obtain
\[ \frac{\epsilon \sin \alpha}{ds^\epsilon/dt} = T^\epsilon_z \geq m \]
and thus
\[ \frac{ds^\epsilon}{dt} \leq \frac{\epsilon}{m}. \]

Finally, 
\[ k^\epsilon(t) \geq k^1(t) \cdot \frac{m^3}{\epsilon^2} \geq \frac{m^4}{\epsilon^2}. \]
\end{proof}

\section{Curvature in a flattened tube} \label{sectCurvatureInTube}
In sections~\ref{sectCurvatureInTube}, \ref{sectDynamicsInTube} and~\ref{sectEndProof}, we choose constants $\nu$, $m$, $\delta$, and $\epsilon$, in the interval $(0,1)$, such that:
\begin{enumerate}
\item These constants are sufficiently small: how small they need to be depends on the choice of the billiard $D$ and the surface $\Sigma$.
\item These constants satisfy $\nu \gg m \gg \delta \gg \epsilon$. This means that the ratios $m / \nu$, $\delta / m$ and $\epsilon / \delta$ are sufficiently small, again with respect to the choice of $D$ and $\Sigma$. We even assume that the ratios $m / \nu^{1000}$, $\delta / m^{1000}$ and $\epsilon / \delta^{1000}$ are small.
\end{enumerate}

We consider these constants as fixed once and for all, to avoid adding in each statement a prefix such as ``there exists $\nu_0 > 0$, such that for all $\nu \leq \nu_0$, there exists $m_0 > 0$, such that for all $m \leq m_0$\dots''.

% We will call a ``tube'' a neighborhood of $\partial \Delta_i$ in $\Sigma$ which admits a symmetry of revolution, given by Assumption~3 of Theorem~\ref{thmAnosov}.

In this section, we consider a circular obstacle $\Delta_{i_0}$, with center $q_0 \in \mathbb S^2$, and use the stereographic projection of $\mathbb S^3$ with $q_0$ as the south pole (that is, $q_0$ has coordinates $(0, 0, 0)$). We will use cylindric coordinates $(r, \theta, z)$. The circle $\Delta_{i_0}$ is defined by the equation $r = R$, $z = 0$, where $R \in (0,1)$.

Consider $\tilde {\mathcal T} = \setof{(r, \theta, z) \in \Sigma_\epsilon}{r \leq R + \delta}$, and $\mathcal T$ the connected component of $\tilde{\mathcal T}$ which contains $\partial \Delta_{i_0}$. The ``tube'' $\mathcal T$ is a surface of revolution (assumption~3 of Theorem~\ref{thmAnosov}), obtained by rotation of a curve $\mathcal S$ around the $z$-axis.

More precisely, we define the curve $\mathcal S$ as the intersection of $\mathcal T$ with the half great sphere corresponding to the equation $\theta = 0$. It has an upper part described by the equation $z = h(r)$, and a lower part given by $z = -h(r)$. Here, the mapping $h$ is nonnegative, defined continuously on the interval $[R, R + \delta]$, smooth on $(R, R + \delta)$, and such that $h(R) = 0$ (see Figure~\ref{figStereo}).

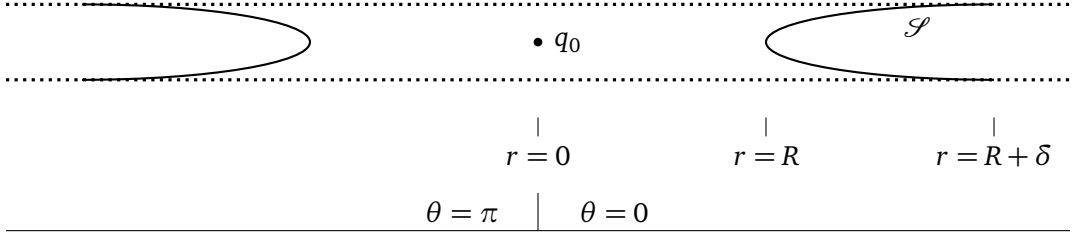
\begin{figure}[!ht]
\begin{center}
\begin{tikzpicture}
    \draw[thick] (-6,0) [partial ellipse=-90:90:3cm and 0.5cm];
    \draw[thick] (6,0) [partial ellipse=90:270:3cm and 0.5cm];
    \draw[dotted, very thick] (-7,0.5) -- (7,0.5);
    \draw[dotted, very thick] (-7,-0.5) -- (7,-0.5);
    \draw (6,-1) -- (6,-1.25);
    \draw (3,-1) -- (3,-1.25);
    \draw (0,-1) -- (0,-1.25);
    \node at (6, -1.5) {$r = R + \delta$};
    \node at (3, -1.5) {$r = R$};
    \node at (0, -1.5) {$r = 0$};
    
    \node at (5, 0.2) {$\mathcal S$};
    \node at (0.4, 0) {$q_0$};
    \draw[fill=black] (0,0) circle (0.05cm);
    
    \draw (0,-2) -- (0,-2.5);
    \draw (-7,-2.5) -- (7,-2.5);
    \node at (1, -2.25) {$\theta = 0$};
    \node at (-1, -2.25) {$\theta = \pi$};
\end{tikzpicture}
\end{center}

\caption{The curve $\mathcal S$ (on the right-hand side) seen in stereographic projection. The two dotted lines correspond to two spheres in $\mathbb S^3$ which are close to the great sphere $\mathbb S^2$.} \label{figStereo}
\end{figure}

We consider the Euclidean metric $g_\mathrm{eucl}$ and the metric of the stereographic projection \[ g_\mathrm{stereo} = \xi^2 g_\mathrm{eucl}, \quad \text{ where } \quad \xi = \frac{2}{(1 + r^2 + z^2)}. \] The Euclidean scalar product is denoted by $\prodscal{\cdot}{\cdot} = g_\mathrm{eucl}(\cdot, \cdot)$, and the Euclidean norm is $\norm{\cdot} = \sqrt{\prodscal{\cdot}{\cdot}}$. The Levi-Civita connection of $g_\mathrm{stereo}$ is written $\nabla$.

There are three unit vectors $e_r(q)$, $e_\theta(q)$ and $e_z(q)$ for each $q = (x, y, z) \in \mathbb R^3 \setminus \{0\}$, where (in cartesian coordinates) $e_r(q) = (x / \sqrt{x^2 + y^2}, y / \sqrt{x^2 + y^2}, 0)$, $e_z = (0,0,1)$ and $e_\theta = e_z \times e_r$. The Euclidean norm of these vectors is $1$. If $p$ is a vector in $\mathbb R^3 \setminus \{0\}$, we will write $p_r = \prodscal{p}{e_r}$, $p_\theta = \prodscal{p}{e_\theta}$ and $p_z = \prodscal{p}{e_z}$.

The second fundamental form of the surface $\Sigma_\epsilon$ is defined by $\mathrm{II}_q(v) = \xi^2 \prodscal{\nabla_v v}{N(q)}$, where $N(q)$ is the unit normal vector to $\Sigma_\epsilon$ for the metric $g_\mathrm{stereo}$ (thus $\xi \norm{N} = 1$). Any geodesic ($q(t), p(t)$) on the tube satisfies the equation
\[ \nabla_p p = \mathrm{II}_q(p) N(q). \] When studying a tube, we always assume that $N(q)$ points to the outside of the tube, and write $N_r$, $N_\theta$ and $N_z$ its spherical coordinates.

By symmetry, at each point $q$, the two principal curvatures are $k_1 (q) = \mathrm{II}_q(e_\theta/\xi)$ and $k_2 = \mathrm{II}_q(e_s/\xi)$, where $e_s$ is a unit vector which is orthogonal to $e_\theta$ and tangent to $\Sigma_\epsilon$.

The curvature of $\Sigma$ at $q$ is $k_1 k_2 + 1$.

\begin{lemme} \label{lemmak1}
Consider the waist $W$ of the tube $\mathcal T$ (the smallest horizontal circle contained in the tube), and $\kappa$ its curvature for the metric $g_\mathrm{stereo}$. Then for all $q = (q_r, q_\theta, q_z)$ in the tube:
\begin{enumerate}
\item $\abs{k_1(q) - \xi N_r(q) \kappa} \leq m^2$;
\item $k_1(q) \leq 0$.
\end{enumerate}
\end{lemme}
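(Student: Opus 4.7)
My plan is to exploit the rotational symmetry of $\mathcal T$ together with the conformal relation $g_\mathrm{stereo}=\xi^2 g_\mathrm{eucl}$ to derive a closed-form expression for $k_1(q)$, and then to compare it to $\xi N_r\kappa$ via a Taylor expansion around the waist. The unit $g_\mathrm{stereo}$-tangent along a horizontal circle is $v=e_\theta/\xi$, and because $\xi$ does not depend on $\theta$, the conformal-change formula for the Levi--Civita connection collapses: applying it with $f=\log\xi$ yields
\[
  \nabla_v v \;=\; \frac{1}{\xi^2}\nabla^{\mathrm{eucl}}_{e_\theta}e_\theta \;-\; \frac{1}{\xi^2}\operatorname{grad}_{\mathrm{eucl}}(f) \;=\; \Bigl(\frac{r}{\xi}-\frac{1}{\xi^2 r}\Bigr)e_r + \frac{z}{\xi}\,e_z,
\]
and hence, projecting onto the outward normal,
\[
  k_1(q) \;=\; \xi^2\langle\nabla_v v,N\rangle \;=\; \Bigl(\xi r-\tfrac{1}{r}\Bigr)N_r + \xi z\,N_z.
\]

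For claim~(1), I first evaluate the identity above at the waist $W$. By the inversion symmetry of assumption~3 of Theorem~\ref{thmAnosov}, the waist lies at $z_W=0$, and by rotational symmetry $N(W)=e_r/\xi_W$, so $N_r(W)=1/\xi_W$ and $N_z(W)=0$. Substitution gives $k_1(W)=r_W-1/(\xi_W r_W)$, which agrees with $\kappa$ up to the sign convention implicit in the statement (the signed curvature of $W$ inherited from the outward normal $N$). On the rest of $\mathcal T$, the deviations $|r-r_W|$ and $|z|$ are bounded by the tube's diameter, which is $O(\delta)$. A first-order Taylor expansion of the smooth functions $r\mapsto\xi r-1/r$ and $\xi$ around $(r_W,0)$ then shows that $k_1(q)-\xi N_r\kappa=O(\delta)$; the hierarchy $\delta\ll m^{1000}$ fixed at the start of Section~\ref{sectCurvatureInTube} makes this much smaller than $m^2$.

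For claim~(2), I analyse the sign of each term in the explicit formula. Since $R<1$ (the spherical radius of the obstacle is less than $\pi/2$) and $\delta$ is small, $r<1$ throughout $\mathcal T$, so
\[
  \xi r-\tfrac{1}{r}=\frac{r^2-z^2-1}{r(1+r^2+z^2)}<0,
\]
and, combined with $N_r>0$, this makes the first term non-positive. For the second term, the waist being a minimum of $r$ along the profile means that, writing $r=\rho(z)$ with $\rho'(0)=0$ and $\rho$ convex near $z=0$, the derivative $\rho'(z)$ has the sign of $z$; the outward Euclidean normal is then proportional to $(1,-\rho'(z))$, so $N_z$ and $z$ have opposite signs and $zN_z\leq 0$. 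Hence $k_1\leq 0$. The main difficulty in the whole argument is really the conformal bookkeeping that produces the explicit formula for $k_1$; once that identity is in hand, claim~(1) is a routine first-order Taylor expansion using the smallness of $\delta$, and claim~(2) reduces to the elementary sign analysis above.
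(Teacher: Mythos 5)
Your proof is essentially correct and is in fact \emph{more explicit} than the paper's. Where the paper appeals to the estimate $\norm{\nabla_{\mathcal C'(0)}\mathcal C'(0)-\kappa e_r/\xi}\leq m^3$ on the grounds that ``$\mathcal C$ is close to $W$'', you actually compute $\nabla_{v}v$ in closed form from the conformal change formula for the Levi--Civita connection (using $g_\mathrm{stereo}=\xi^2 g_\mathrm{eucl}$, with $e_\theta(\log\xi)=0$), and obtain
\[
\nabla_v v=\Bigl(\tfrac{r}{\xi}-\tfrac{1}{\xi^2 r}\Bigr)e_r+\tfrac{z}{\xi}e_z,
\qquad
k_1=\Bigl(\xi r-\tfrac1r\Bigr)N_r+\xi z N_z,
\]
which I have verified is correct. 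This gives you claim~(1) by the same ``close to the waist'' argument but with the dependence fully displayed, and it makes the sign analysis in claim~(2) completely elementary. Two remarks worth keeping in mind.

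First, you correctly notice that equating $k_1(W)$ with $\kappa$ forces a sign convention on $\kappa$: with $N_r(W)=1/\xi_W$, the formula gives $k_1(W)=r_W-1/(\xi_W r_W)=(R^2-1)/(2R)<0$, so $\kappa$ must be the \emph{signed} geodesic curvature of $W$ with respect to the outward normal $N$ (equivalently, the curvature of $\partial D$ is taken negative from the billiard side, as a dispersing boundary). This is indeed what the paper uses downstream (e.g.\ $k_1\leq -\kappa m/2$ in Lemma~\ref{lemmaCurvatureAlmostNegative}), even though the wording around Theorem~\ref{thmHypBilliard} reads as if $\kappa>0$; your flag of the ``sign convention implicit in the statement'' is warranted.

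Second, for claim~(2), you (like the paper's proof, which just asserts $N_r\geq 0$, $N_z\leq 0$, and the sign of the components of $\nabla_{\mathcal C'}\mathcal C'$) take on faith that the profile is a graph $r=\rho(z)$ with $\rho'$ having the sign of $z$ throughout the tube, not only near $z=0$ where convexity comes from Assumption~2 of Theorem~\ref{thmAnosov}. This global monotonicity ultimately comes from the transversality Assumption~1 (the surface is transverse to the radial fibers away from $\partial D$), which forbids the profile from turning back; neither your write-up nor the paper makes this explicit, but since the paper's own proof of the lemma takes the same shortcut, this is not a gap relative to the paper. Likewise, in your Taylor argument for claim~(1), do note explicitly that $N_r$ is \emph{not} nearly constant on the tube (it drops to $\approx 0$ at the outer edge), so the relevant cancellation is $(\xi r-1/r)-\xi\kappa=O(\delta)$ with $N_r$ merely bounded, plus $|\xi z N_z|=O(\delta)$ from $|z|\ll\delta$; this is what your argument actually uses, and it is correct, just worth stating so that the reader does not think you are expanding $N_r$ as well.
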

\begin{proof}
We consider a parametrization $\mathcal C(t)$ of the horizontal circle contained in $\Sigma$ such that $\mathcal C(0) = q$, with unit speed (for $g_\mathrm{stereo}$). The principal curvature $k_1$ is
\[ k_1 = \xi^2 \prodscal{\nabla_{\mathcal C'(0)} \mathcal C'(0)}{N(q)}. \]
Since the circle $\mathcal C$ is close to the circle $W$, we have
\[ \norm{\nabla_{\mathcal C'(0)} \mathcal C'(0) - \kappa \frac{e_r}{\xi}} \leq m^3 \]
and thus
\[ \abs{k_1(q) - N_r(q) \kappa} = \abs{\xi^2 \prodscal{\nabla_{\mathcal C'(0)} \mathcal C'(0)}{N(q)} - \xi \kappa N_r(q)} \leq m^3 \xi^2 \leq m^2 \]
which proves the first statement.

We now prove the second statement: by symmetry, we may assume that $q_z \geq 0$. Writing $\nabla_{\mathcal C'(0)} \mathcal C'(0) = (r, \theta, z)$, we obtain $k_1 = \xi^2(r N_r(q) + z N_z(q))$. Notice that $r \leq 0$ and $z \geq 0$. At the same time, $N_r(q) \geq 0$ and $N_z(q) \leq 0$. Thus, $k_1 \leq 0$.
\end{proof}

\begin{lemme} \label{lemmaChristoffel}
Consider a smooth curve $q(t)$ in $\Sigma_\epsilon$ and $p(t) = \dot q(t)$. Then:
\[ \abs{\nabla_{p(t)} p(t) - \dot p(t)} \leq \norm{p(t)}^2 / m^{1/10}. \]
\end{lemme}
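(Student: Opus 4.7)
The plan is to express $\nabla_p p - \dot p$ as a quadratic-in-$p$ expression built from the Christoffel symbols of $g_\mathrm{stereo}$ in the Cartesian chart of $\mathbb R^3$, and then exploit the fact that these symbols are bounded by an absolute constant on the region where the tube lives.

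\textbf{Step 1 (setup).} Since $g_\mathrm{stereo} = \xi^2 g_\mathrm{eucl}$ is a conformal rescaling of a flat metric, the standard conformal-change formula gives, in the Cartesian chart of $\mathbb R^3$,
\[ \Gamma^k_{ij} = \delta^k_i\,\partial_j(\log\xi) + \delta^k_j\,\partial_i(\log\xi) - \delta_{ij}\,\partial_k(\log\xi), \]
because the Euclidean Christoffel symbols vanish identically. For a smooth curve $q(t)$ with $p = \dot q$, I would then unwind $(\nabla_p p)^k = \ddot q^k + \Gamma^k_{ij}(q)\,p^i p^j = \dot p^k + \Gamma^k_{ij}(q)\,p^i p^j$, so that the lemma reduces to bounding the quadratic form $\Gamma^k_{ij}(q)\,p^i p^j$ by a constant multiple of $\|p\|^2$.

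\textbf{Step 2 (universal bound on $\nabla \log \xi$).} From $\log \xi = \log 2 - \log(1 + \|q\|^2)$ one reads off
\[ \norm{\nabla \log \xi}_{\mathrm{eucl}} = \frac{2\|q\|}{1 + \|q\|^2} \leq 1 \]
on all of $\mathbb R^3 \setminus \{0\}$ by AM--GM. Plugging this into the expression for $\Gamma^k_{ij}$ yields an absolute bound
\[ \abs{\nabla_p p - \dot p} \leq C_0 \|p\|^2, \]
with $C_0$ a universal constant (one can take $C_0 = 3$) that does not depend on $q$, on the surface $\Sigma$, or on any of the small parameters $\nu, m, \delta, \epsilon$.

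\textbf{Step 3 (conclusion).} By the smallness convention fixed at the beginning of Section~\ref{sectCurvatureInTube}, the constant $m$ is chosen so small that $C_0 \leq m^{-1/10}$, and the desired estimate follows. The main and essentially only subtle point is getting the sign convention right in the conformal-change formula for the Christoffel symbols; once that is done, the fact that the resulting bound is universal makes the statement almost immediate. The lemma will then serve later as a routine book-keeping tool, allowing one to replace $\nabla_p p$ by $\dot p$ modulo a term that is negligible compared to the curvature contributions of order $m^4/\epsilon^2$ produced by Lemma~\ref{lemmaHighCurvatureFlat}.
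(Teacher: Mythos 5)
Your proof is correct and follows essentially the same strategy as the paper: write $\nabla_p p - \dot p$ as the quadratic form $\Gamma^k_{ij}\,p^i p^j$ and bound it using the boundedness of the Christoffel symbols of $g_\mathrm{stereo}$. The paper merely asserts $\sum_{i,j,k}\abs{\Gamma^k_{ij}} \leq m^{-1/10}$ in the moving cylindric frame, while you make this transparent by working in Cartesian coordinates, invoking the conformal-change formula, and extracting a clean universal constant $C_0 = 3$ via $\norm{\nabla\log\xi}\leq 1$ — a slightly sharper and more self-contained version of the same calculation.
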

\begin{proof}
\[ \nabla_{p(t)} p(t) = \sum_{1 \leq i, j, k \leq 3} p_i(t) p_j(t) \Gamma_{ij}^k e_k + \dot p(t), \]
where $p_1$, $p_2$, $p_3$ mean respectively $p_r$, $p_\theta$ and $p_z$, and $e_1$, $e_2$, $e_3$ mean respectively $e_r$, $e_\theta$ and $e_z$, and $\Gamma_{ij}^k$ are the Christoffel symbols of $\nabla$.

Thus
\[ \abs{\nabla_{p(t)} p(t) - \dot p(t)} \leq \sum_{1 \leq i, j, k \leq 3} \norm{p(t)}^2 \abs{\Gamma_{ij}^k} \leq \norm{p(t)}^2 / m^{1/10}. \]
\end{proof}

\begin{lemme} \label{lemmak2}
Consider the curve $c: [a,b] \to \mathbb R^2$, $c(t) = (r(t), z(t))$, which is a unit-speed parametrization (for the metric $g_\mathrm{stereo}$) of the curve $\mathcal S$. Assume that $0 \in (a, b)$, $c(0) = R$ and $c'(0) = (0,1)$. Recall that the curvature of $c$ at $0$ is nonzero. For all $t \in (a, b)$, we have $r(t) \geq R$, $r(-t) = r(t)$ and $z(-t) = -z(t)$. The unit tangent vector to $c(t)$ is $T(t) = (T_r(t), T_z(t))$ (that is, $g_\mathrm{eucl}(T, T) = 1$), and the normal vector is $N = (T_z(t), -T_r(t))$. The curvature of $c$ for the metric $g_\mathrm{stereo}$ is written $k_\mathrm{stereo}$.

% Then there exists $m_0 > 0$, such that for all $m \leq m_0$, there exists $\delta_0 > 0$ such that for all $\delta \leq \delta_0$, there exists $\epsilon_0 > 0$ such that for all $\epsilon \leq \epsilon_0$, 
Then there exists $t_c$ such that
\begin{enumerate}
\item for all $t \in (0, t_c)$, $T_z(t) \geq m / 2$ and $k_\mathrm{stereo}(t) \geq m^5 / \epsilon^2$,
\item for all $t \in (t_c, m)$, $T_z(t) \leq 2 m$ and $k_\mathrm{stereo}(t) \geq - 1 / m^{1/4}$.
\item $\int_a^b k_\mathrm{stereo} (t) dt \leq 1/m^{1/8}$.
\end{enumerate}

In the following, we will write $r_c = r(t_c)$.

\end{lemme}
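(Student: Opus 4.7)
The plan is to reduce Lemma~\ref{lemmak2} to Lemma~\ref{lemmaHighCurvatureFlat} and then translate the Euclidean conclusions to the stereographic metric via the standard conformal change-of-curvature formula.

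First, identify $\mathcal{S}$ with the flattened profile of Lemma~\ref{lemmaHighCurvatureFlat}: in the local stereographic coordinates of this section, the restriction of $f_\epsilon$ to the meridian half-plane $\{\theta=0\}$ acts, to leading order, as the $z$-flattening $(r,z)\mapsto (r, \epsilon z)$ of Lemma~\ref{lemmaHighCurvatureFlat}, with a smooth perturbation (from the nonlinearity of $\pi$ near $\mathbb{S}^2$) whose own contribution to Euclidean curvature is $O(1)$. Applying Lemma~\ref{lemmaHighCurvatureFlat} with threshold $m/2$ produces a value $\tilde t_c$ in the Euclidean arc-length parameter $\tilde t$ of the unflattened profile such that $T_z \geq m/2$ and $k^\epsilon \geq m^4/(16\epsilon^2)$ on $(0,\tilde t_c)$; a second application with a fixed $\epsilon$-independent threshold $m_0$ extends the qualitative bound $k^\epsilon \geq 0$ and the monotonicity of $T_z$ to the whole interval $\tilde t \in (0, m_0)$.

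Second, reparametrize by $g_\mathrm{stereo}$ arc-length $t$. Since $\xi = 2/(1+r^2+z^2)$ is bounded above and below by positive constants on the tube region, the reparametrization has bounded distortion, and the tangent direction (hence $T_z$) is parametrization-invariant. Defining $t_c$ as the stereographic arc-length of $\mathcal{S}$ up to the point $\tilde t_c$, monotonicity of $T_z$ gives $T_z \geq m/2$ on $(0, t_c)$ and $T_z \leq m/2 < 2m$ on $(t_c, m)$, provided the $t$-interval $(0, m)$ corresponds to $\tilde t$-values inside $(0, m_0)$; this follows from the hierarchy $m \ll m_0$ together with the fact that the stereographic speed $\xi\sqrt{r'^2 + \epsilon^2 z'^2}$ grows at most linearly in $\tilde t$ away from the waist, so that $t = m$ corresponds to $\tilde t = O(\sqrt{m}) \ll m_0$. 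The curvature is translated by the planar conformal-change formula
\[ k_\mathrm{stereo} = \frac{1}{\xi}\bigl(k^\epsilon - N(\log\xi)\bigr), \]
where $N$ is the Euclidean unit normal. The term $N(\log\xi)$ is uniformly bounded by a constant $C_0$ on the tube region, so $k^\epsilon \geq m^4/(16\epsilon^2)$ yields $k_\mathrm{stereo} \geq m^5/\epsilon^2$ for $m$ small, while $k^\epsilon \geq 0$ yields $k_\mathrm{stereo} \geq -C_0/\xi_\mathrm{min} \geq -1/m^{1/4}$ for $m$ small.

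For part~(3), integrate the conformal identity along $\mathcal{S}$:
\[ \int_a^b k_\mathrm{stereo}(t)\, dt = \int_\mathcal{S} k^\epsilon\, ds_\mathrm{eucl} - \int_\mathcal{S} N(\log\xi)\, ds_\mathrm{eucl}. \]
The first term is the total Euclidean turning of the tangent along the symmetric U-shaped profile, bounded by $\pi$; the second is bounded by $C_0$ times the Euclidean length of $\mathcal{S}$, itself a fixed constant. Hence the total is $O(1)$, therefore $\leq 1/m^{1/8}$ for $m$ small. The main technical obstacle will be checking that the perturbation of $f_\epsilon$ away from the idealized flattening of Lemma~\ref{lemmaHighCurvatureFlat} does not spoil the high curvature bound $k^\epsilon \geq m^4/(16\epsilon^2)$ near the waist; this is manageable because the perturbation is smooth with $\epsilon$-independent bounds, so its own curvature contribution is $O(1)$, negligible next to $m^4/\epsilon^2$.
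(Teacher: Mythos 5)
Your overall strategy matches the paper's: reduce to Lemma~\ref{lemmaHighCurvatureFlat} and then convert Euclidean curvature estimates to $g_{\mathrm{stereo}}$ estimates. You take the conversion step differently — you invoke the exact planar conformal-change formula $k_{\mathrm{stereo}} = \xi^{-1}\bigl(k_{\mathrm{eucl}} - \partial_N \log\xi\bigr)$, whereas the paper passes through Lemma~\ref{lemmaChristoffel} (a uniform bound on the Christoffel symbols, giving $\lvert k_{\mathrm{stereo}} - \xi^{-1}k_{\mathrm{eucl}}\rvert \le \xi^{-1}m^{-1/10}$). Both give the right qualitative outcome; your exact formula is slightly cleaner and the paper's bound is slightly more elementary to state, so this is a stylistic rather than substantive difference.

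There is, however, a real gap in the way you invoke Lemma~\ref{lemmaHighCurvatureFlat}. That lemma applies precisely to curves flattened by the \emph{exact} affine map $A_\epsilon:(r,z)\mapsto(r,\epsilon z)$; its conclusion (the $m^4/\epsilon^2$ lower bound where $T_z\ge m$) comes from the identity $\tan\alpha^\epsilon = \epsilon\tan\alpha^1$ and its differentiation, which is specific to $A_\epsilon$. You say that $f_\epsilon$ ``acts, to leading order,'' like $A_\epsilon$ and that the perturbation has $O(1)$ curvature contribution, but as written this does not let you apply Lemma~\ref{lemmaHighCurvatureFlat} at all, because the actual profile curve $c$ is $f_\epsilon$ of a fixed curve, not $A_\epsilon$ of one. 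The paper resolves this by introducing a diffeomorphism $\Phi$, $C^1$-close to the identity on the slab $\lvert z\rvert<\sqrt\epsilon$, with the defining property $\Phi^{-1}\circ f_\epsilon\circ\Phi = A_\epsilon$; then $\tilde c = \Phi^{-1}\circ c$ \emph{is} an affinely flattened curve and Lemma~\ref{lemmaHighCurvatureFlat} applies to it exactly, after which the $C^1$-closeness of $\Phi$ to the identity transfers the curvature estimate to $c$ at the cost of a bounded multiplicative factor and a bounded additive error (the $m^{1/10}\tilde k_{\mathrm{eucl}} - m^{-1/10}$ step in the paper). Your closing remark correctly identifies this as the ``main technical obstacle'' and gives the right heuristic (the perturbation is $\epsilon$-independent, so its curvature contribution is negligible against $m^4/\epsilon^2$), but you should make the conjugation explicit rather than treating $f_\epsilon$ as $A_\epsilon$ plus an additive perturbation, which is not what it is.

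One minor point: your claim that the stereographic speed ``grows at most linearly in $\tilde t$'' so that $t=m$ corresponds to $\tilde t = O(\sqrt m)$ is not needed and not obviously true; all you need is that the $g_{\mathrm{stereo}}$ arc-length interval $(0,m)$ lies inside the domain where Lemma~\ref{lemmaHighCurvatureFlat}'s conclusion holds, which follows simply from $\xi$ being bounded above and below on the tube together with the hierarchy $\epsilon\ll\delta\ll m$.
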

\begin{proof}
The strategy is to reduce the problem to a flat one, and then apply Lemma~\ref{lemmaHighCurvatureFlat}.

Consider the mapping $\Phi : (0,1) \times (- \sqrt{\epsilon}, \sqrt{\epsilon}) \to \mathbb R^2$, whose restriction to the line $z = 0$ is the identity, which is a bijection onto its image, and such that $\Phi^{-1} \circ f_\epsilon \circ \Phi$ coincides with the affine map $A: (r, z) \mapsto (r, \epsilon z)$ on its domain.

Then
\[ \sup_{\mathbb R \times (- \sqrt{\epsilon}, \sqrt{\epsilon})} \norm{\Phi - \mathrm{Id}} < m^2 \quad \text{ and } \quad \sup_{\mathbb R \times (- \sqrt{\epsilon}, \sqrt{\epsilon})} \norm{D\Phi - D\mathrm{Id}} < m^2 \]

(``$\Phi$ is $C^1$-close to the identity''). We will write $\tilde c(t) = \Phi^{-1} \circ c(t)$. Notice that $\tilde c$ is a ``flattened curve'' in the Euclidean sense, and thus, Lemma~\ref{lemmaHighCurvatureFlat} applies to $\tilde c$.

We denote by $s$ the arc length of $c$ for the metric $g_\mathrm{eucl}$, and consider the curvature $k_\mathrm{eucl}$ of $c$ for the metric $g_\mathrm{eucl}$. Similarly, we denote by $\tilde s$ the arc length of $\tilde c$ for the metric $g_\mathrm{eucl}$, and consider the curvature $\tilde k_\mathrm{eucl}$ of $\tilde c$ for the metric $g_\mathrm{eucl}$. Also, $\tilde T$ (\emph{resp.} $\tilde N$) is the unit tangent (\emph{resp.} normal) vector to $\tilde c$ for the metric $g_\mathrm{eucl}$. Then:

\[ \begin{aligned}
k_\mathrm{eucl} & = \prodscal{\frac{d \left(D\Phi (\tilde c(t)) \cdot \frac{\tilde T}{\norm{D\Phi (\tilde c(t)) \cdot \tilde T}}\right)}{d\tilde s} \cdot \frac{d \tilde s}{ds}}{N}
\\ & = \prodscal{D^2 \Phi (c(t)) \cdot \left(\frac{\tilde T}{\norm{D\Phi (\tilde c(t)) \cdot \tilde T}}, \tilde T \right) \cdot \frac{d \tilde s}{ds}}{N}
\\ & + \prodscal{D\Phi (\tilde c(t)) \cdot \left( \frac{d\tilde T}{d\tilde s} \cdot \frac{1}{\norm{D\Phi (\tilde c(t)) \cdot \tilde T}} - \tilde T \frac{d}{ds}\left(\frac{1}{\norm{D\Phi (\tilde c(t)) \cdot \tilde T}} \right)\right) \cdot \frac{d \tilde s}{ds}}{N}
\\ & \geq m^{1/10} \tilde k_\mathrm{eucl} (t) - \frac{1}{m^{1/10}}.
\end{aligned}
\]

On the other hand, using Lemma~\ref{lemmaChristoffel}:

\[ \begin{aligned} \abs{k_\mathrm{stereo} - \frac{1}{\xi} k_\mathrm{eucl}} & = \abs{\xi^2 \prodscal{\nabla_{T/\xi} T/\xi}{N/\xi} - \frac{1}{\xi} \prodscal{\frac{dT}{ds}}{N}}
\\ & = \abs{\frac{1}{\xi} \prodscal{\nabla_T T - \frac{dT}{ds}}{N}}
\\ & \leq \frac{1}{\xi m^{1/10}}. \end{aligned} \]

Hence
\[ k_\mathrm{stereo}(t) \geq m^{1/9} \cdot k_\mathrm{eucl}(t) - \frac{1}{m^{1/9}}. \]

Finally,
\[ k_\mathrm{stereo}(t) \geq m^{1/4} \cdot \tilde k_\mathrm{eucl}(t) - \frac{1}{m^{1/4}} \]
and thus Lemma~\ref{lemmaHighCurvatureFlat} applied to the curve $\tilde c$ allows us to prove Statements 1 and 2.

We now prove Statement 3. From the inequality
\[ \abs{k_\mathrm{stereo} - \frac{1}{\xi} k_\mathrm{eucl}} \leq \frac{1}{\xi m^{1/10}}
\]
we also obtain
\[ k_\mathrm{stereo} \leq \frac{1}{\xi} k_\mathrm{eucl} + \frac{1}{\xi m^{1/10}} \]
and thus
\[ \begin{aligned} \int_a^b k_\mathrm{stereo} (t) dt & \leq \int_a^b (k_\mathrm{eucl} (t) + \frac{1}{m^{1/10}}) \frac{ds}{dt} dt \\ & \leq \left (\int_a^b 2\frac{d\alpha}{ds} ds + (b-a)\frac{1}{m^{1/9}} \right) \\ & \leq 4\pi + \frac{(b-a)}{m^{1/9}} \leq 1/m^{1/8}. \end{aligned} \]
\end{proof}

\section{The dynamics in the tube} \label{sectDynamicsInTube}

In this section, we consider a unit speed geodesic $(q(t), p(t))$ in $T^1 \mathcal{T}$, where $q$ is the position and $p$ is the speed. We will write $(q_r, q_\theta, q_z)$ and $(p_r, p_\theta, p_z)$ the cylindric coordinates of $q$ and $p$. Also, we write $p_s = \sqrt{p_r^2 + p_z^2}$ and define $e_s$ as the unit vector such that $p_s = \prodscal{p}{e_s}$.

The field $r e_\theta$ is a Killing field on $\mathcal T$. Thus, the quantity $L = g(r e_\theta, p) = \xi^2 r p_\theta$ is constant on each geodesic (this is the Clairaut first integral).
% Ne sert à rien ? In particular, since $p_\theta \leq \frac{1 + (R + \delta)^2 + \delta^2}{2}$ and $r \leq R + \delta$, we have $L \leq 2 (1 + (R + \delta)^2 + \delta^2) (R + \delta)$.

The geodesic starts on the boundary of the tube at $t = t^\mathrm{in}$ and exits at $t = t^\mathrm{out}$ (if the geodesic does not exit the tube, we write $t^\mathrm{out} = +\infty$).

\begin{lemme} \label{lemmaNonGrazingPs}
For all $t \in [t^\mathrm{in}, t^\mathrm{out}]$, we have
\[ \abs{p_s(t) - p_s(t^\mathrm{in})} \leq m^{10}. \]
\end{lemme}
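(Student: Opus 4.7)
My plan is to exploit the rotational symmetry of the tube $\mathcal T$: the geodesic flow on it admits two first integrals that together express $p_s^2$ as a function of position alone, and since $\mathcal T$ is very thin, that position moves little along any trajectory inside it.

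Combining the unit-speed condition $\xi^2(p_s^2 + p_\theta^2) = 1$ in $g_\mathrm{stereo}$ with the Clairaut integral $L = \xi^2 r p_\theta$ (conserved because $r e_\theta$ is a Killing field for $g_\mathrm{stereo}$) and eliminating $p_\theta$, I would write
\[ p_s(t)^2 = F(q_r(t), q_z(t)) := \frac{1}{\xi^2} - \frac{L^2}{\xi^4 q_r^2}, \qquad \xi = \frac{2}{1 + q_r^2 + q_z^2}. \]
Thus $p_s^2$ depends on the trajectory only through the current cylindric coordinates $(q_r, q_z)$, with $L$ fixed.

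Next I would control the oscillation of $(q_r, q_z)$ inside $\mathcal T$. By the very definition of $\mathcal T$, one has $q_r(t) \in [R, R+\delta]$, so $\abs{q_r(t) - q_r(t^\mathrm{in})} \leq \delta$. The flattening $f_\epsilon$ compresses the vertical extent of the tube by a factor of order $\epsilon$, so $\abs{q_z(t)} = O(\epsilon)$ throughout $\mathcal T$. Since $\partial_r \xi = -q_r \xi^2$ and $\partial_z \xi = -q_z \xi^2$, a direct computation yields $\abs{\partial_{q_r} F} = O(1)$ and $\abs{\partial_{q_z} F} = O(\epsilon)$ on $\mathcal T$, with constants depending only on $R$ and on the fixed geometry of $\Sigma$. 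Therefore
\[ \abs{p_s(t)^2 - p_s(t^\mathrm{in})^2} = O(\delta) + O(\epsilon) \cdot O(\epsilon) = O(\delta). \]

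Finally, since $p_s \geq 0$ by definition, the elementary inequality $\abs{\sqrt a - \sqrt b} \leq \sqrt{\abs{a-b}}$ gives $\abs{p_s(t) - p_s(t^\mathrm{in})} = O(\sqrt \delta)$, which lies well below $m^{10}$ by the standing hypothesis $\delta \ll m^{1000}$. The only point that really demands care is the uniformity of the implicit constants in the individual geodesic and in $\epsilon$; this follows from the compactness of a neighbourhood of the obstacle together with the explicit form of $f_\epsilon$. Beyond that, the argument is direct manipulation of two conservation laws, and I do not anticipate a serious obstacle.
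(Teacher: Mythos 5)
Your proof follows essentially the same route as the paper's: express $p_s^2 = 1/\xi^2 - L^2/(\xi^4 q_r^2)$ using the unit-speed condition together with the Clairaut integral $L = \xi^2 r p_\theta$, then bound the oscillation by noting that $q_r$ and $q_z$ vary only within the thin tube $\mathcal T$. Your final step, passing from the variation of $p_s^2$ to that of $p_s$ via $\abs{\sqrt a - \sqrt b} \leq \sqrt{\abs{a-b}}$ and $\delta \ll m^{1000}$, is a bit more explicit than the paper, which jumps directly from the $p_s^2$ estimate to the conclusion.
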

\begin{proof}
For all $t$, \[ p_s^2 = 1/\xi^2 - p_\theta^2 = 1/\xi^2 - \frac{L^2}{\xi^4 r^2}. \]

The coordinate $r$ varies between $R$ and $R + \delta$. Moreover, knowing that $z \leq \delta$ (with $\epsilon$ sufficiently small), the quantity $\xi$ varies between $\frac{2}{1 + (R+\delta)^2 + \delta^2}$ and $\frac{2}{1 + R^2}$. Thus the variation of $p_s^2$ is less than $m^{10}$.
\end{proof}

\begin{lemme} \label{lemmaTimeNG}
Assume that $\abs{p_s(t^\mathrm{in})} \geq m$. Then the time spent in the tube is smaller than $6 \delta / m$.
\end{lemme}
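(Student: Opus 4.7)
The plan is to reduce the time bound to a geometric length estimate on the profile curve $\mathcal{S}$. The key observation is that $p_s$ stays close to its initial value (hence bounded below) throughout the tube, and the (Euclidean) projected position $(q_r(t), q_z(t))$ is exactly the projection onto $\mathcal{S}$, which moves along it monotonically.

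First, by Lemma~\ref{lemmaNonGrazingPs} and the hypothesis, $p_s(t) \geq m - m^{10} \geq m/2$ for all $t \in [t^\mathrm{in}, t^\mathrm{out}]$. Parametrize $\mathcal{S}$ by Euclidean arc length $\sigma$ with Euclidean unit tangent $T = dc/d\sigma$; since $(q_r(t), q_z(t)) = c(\sigma(t))$ lies on $\mathcal S$, we have $(p_r, p_z) = \dot{\sigma}(t) T(\sigma(t))$, hence $\abs{\dot\sigma} = p_s \geq m/2$. In particular, $\sigma$ is strictly monotone on $[t^\mathrm{in}, t^\mathrm{out}]$ (since it cannot vanish), and the projection sweeps a subarc of $\mathcal{S}$ of total Euclidean length $\int_{t^\mathrm{in}}^{t^\mathrm{out}} p_s\, dt$, which is at most the full Euclidean length $L_\mathrm{eucl}(\mathcal{S})$.

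Second, I estimate $L_\mathrm{eucl}(\mathcal{S}) \leq 3\delta$. By the symmetry assumption of Theorem~\ref{thmAnosov} and the construction of $\Sigma_\epsilon$ as an $\epsilon$-flattening of $\Sigma$, the curve $\mathcal{S}$ splits into upper and lower halves exchanged by $z \mapsto -z$. On each half, $r$ increases monotonically from $R$ to $R + \delta$ (which follows from Lemma~\ref{lemmaHighCurvatureFlat}: the tangent angle $\alpha^\epsilon$ is strictly decreasing within $(0, \pi)$, so $T_r = \cos\alpha^\epsilon > 0$), while $\abs{z}$ stays between $0$ and $h(R+\delta) = O(\epsilon) \ll \delta$. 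The elementary inequality $\sqrt{dr^2 + dz^2} \leq \abs{dr} + \abs{dz}$ integrated over each half gives Euclidean length at most $\delta + O(\epsilon) \leq 3\delta/2$, hence $L_\mathrm{eucl}(\mathcal{S}) \leq 3\delta$.

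Combining the two steps,
\[ (t^\mathrm{out} - t^\mathrm{in}) \cdot \tfrac{m}{2} \leq \int_{t^\mathrm{in}}^{t^\mathrm{out}} p_s(t)\, dt \leq L_\mathrm{eucl}(\mathcal{S}) \leq 3\delta, \]
so $t^\mathrm{out} - t^\mathrm{in} \leq 6\delta/m$. The only mildly subtle step is the monotonicity of $r$ on each half of $\mathcal{S}$, but it follows directly from Lemma~\ref{lemmaHighCurvatureFlat}; the rest is a direct kinematic length-over-speed estimate.
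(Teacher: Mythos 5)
Your proof is correct and follows essentially the same route as the paper's: bound the Euclidean length of the profile curve $\mathcal S$ by $3\delta$, use Lemma~\ref{lemmaNonGrazingPs} to get $p_s \geq m/2$ throughout the tube (so $\dot\sigma$ has constant sign and the projection doesn't backtrack), and divide. You have merely spelled out the length estimate and the monotonicity that the paper asserts in a single sentence, which is a useful clarification but not a different argument.
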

\begin{proof}
The length of the curve $c^\epsilon$ is smaller than $3\delta$. Moreover, Lemma~\ref{lemmaNonGrazingPs} implies that $\abs{p_s(t)} \geq m/2$ for all $t$. In particular, $ds/dt$ does not change sign. Thus, the time spent in the tube is smaller than $3\delta / (ds/dt) \leq 6 \delta/m$.
\end{proof}

%
%\begin{lemme}
%Assume that $\abs{p_s(t^\mathrm{in})} \geq m$. Then
%\[ \int_{t^\mathrm{in}}^{t^\mathrm{out}} k_2 p_s dt \leq 1/m. \]
%\end{lemme}
%\begin{proof}
%Lemma~\ref{lemmaNonGrazingPs} implies that $\abs{p_s(t)} \geq m/2$ for all $t$. In particular, $ds/dt$ does not change sign.
%
%Let us consider the curve $c(s)$ which is the intersection of $S$ with the half great sphere $\theta = 0$, and has unit speed for the metric $g_\mathrm{stereo}$. We will write $\tilde s$ its arc length for the  $k_\mathrm{eucl}$ its curvature for the Euclidean metric and $k_\mathrm{stereo}$ its curvature for the metric $g_\mathrm{stereo}$. Then
%\[ k_\mathrm{eucl} = \frac{d\alpha}{ds} \frac{ds}{d\tilde s} \quad \text{ and } \quad k_\mathrm{stereo} = k_2. \]
%
%\end{proof}

\begin{lemme} \label{lemmaNonGrazingK}
Assume that $\abs{p_s(t^\mathrm{in})} \geq m$. Then
\[ \abs{ \int_{t^\mathrm{in}}^{t^\mathrm{out}} K(t) dt - \frac{2 \kappa}{\xi(t^\mathrm{in}) p_s(t^\mathrm{in})} } \leq m^{1/3}. \]
\end{lemme}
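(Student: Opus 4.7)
The plan is to write the Gauss curvature as $K = k_1 k_2 + 1$ by the Gauss equation for surfaces in $\mathbb{S}^3$, and estimate each contribution separately. The constant $+1$ term contributes $t^\mathrm{out} - t^\mathrm{in}$, which by Lemma~\ref{lemmaTimeNG} is at most $6\delta/m$ and therefore negligible compared with $m^{1/3}$. For the main term, Lemma~\ref{lemmaNonGrazingPs} tells us that $p_s$ stays $m^{10}$-close to $p_s(t^\mathrm{in}) \geq m$ and never changes sign, so the geodesic traverses the tube monotonically in the meridian direction. I would change variables from $t$ to the stereographic meridian arc length $s$ via $|dt/ds| = 1/(\xi p_s)$, then use Lemma~\ref{lemmak1} to replace $k_1$ with $\xi N_r \kappa$ (pointwise error $m^2$) and Lemma~\ref{lemmaNonGrazingPs} to replace $p_s$ with the constant $p_s(t^\mathrm{in})$. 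The accumulated errors are controlled by Statement~3 of Lemma~\ref{lemmak2}, which yields $\int |k_2|\,ds = O(m^{-1/8})$; overall this gives an error of size $m^2 \cdot m^{-1} \cdot m^{-1/8} = m^{7/8}$, well within $m^{1/3}$.

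The problem then reduces to proving that
\[ \int_\mathcal{S} N_r\,k_2\,ds \;=\; \frac{2}{\xi(t^\mathrm{in})} + O(\delta), \]
where $\mathcal{S}$ is the meridian of the tube, traversed between its two boundary endpoints. For this I would parametrize $\mathcal{S}$ by its Euclidean tangent angle $\alpha$: writing the Euclidean unit tangent as $(\cos\alpha, \sin\alpha)$, the stereographic normal of Lemma~\ref{lemmak1} becomes $(\sin\alpha, -\cos\alpha)/\xi$, so $N_r = \sin\alpha/\xi$. The standard conformal formula for the geodesic curvature of a planar curve in $g_\mathrm{stereo} = \xi^2 g_\mathrm{eucl}$ gives $k_2\,ds = -d\alpha + \mathcal{R}\,d\ell_\mathrm{eucl}$, where $\mathcal{R}$ involves $\nabla\log\xi$ and the total correction over the $O(\delta)$-long meridian is $O(\delta)$. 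Using the exact differential identity $d(\cos\alpha/\xi) = -(\sin\alpha/\xi)\,d\alpha - (\cos\alpha/\xi^2)\,d\xi$, I would rewrite the integral as a boundary contribution $[-\cos\alpha/\xi]$ plus a volume term involving $d\xi$; the latter is $O(\delta)$ since the meridian is short and $\xi$ is smooth.

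By the $(r,z) \mapsto (r,-z)$ symmetry of $\mathcal{S}$ (assumption~3 of Theorem~\ref{thmAnosov}), the two endpoints have $\alpha^\mathrm{bot} = \pi - \alpha^\mathrm{top}$ and the same value of $\xi$, so the boundary term evaluates exactly to $2\cos\alpha^\mathrm{top}/\xi(t^\mathrm{in})$. The flattening ensures that $\Sigma_\epsilon$ is close to $\mathbb{S}^2$ outside the tube and hence nearly tangent to a ``horizontal'' direction at $r = R+\delta$, so $\alpha^\mathrm{top}$ is small and $\cos\alpha^\mathrm{top} = 1 + O(\delta)$, which yields the desired identity.

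The main obstacle will be the careful bookkeeping of the conformal correction $\mathcal{R}$ in the relation between $k_\mathrm{stereo}$ and the Euclidean tangent angle, and the verification that $\alpha^\mathrm{top}$ is small enough to make the loss from $\cos\alpha^\mathrm{top} \neq 1$ negligible. The hierarchy $\nu \gg m \gg \delta \gg \epsilon$ fixed at the start of Section~\ref{sectCurvatureInTube} ensures that both the analytic errors of size $O(m^{7/8})$ from the pointwise approximations of $k_1$ and $p_s$, and the geometric errors of size $O(\delta)$ from the computation of $\int_\mathcal{S} N_r k_2\,ds$, stay well below the target $m^{1/3}$.
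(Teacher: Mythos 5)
Your argument is correct and uses the same collection of preparatory lemmas (\ref{lemmaTimeNG}, \ref{lemmaNonGrazingPs}, \ref{lemmak1}, \ref{lemmak2}) and the same two geometric inputs (the $z\mapsto -z$ symmetry of the tube, and the near-horizontality of $\Sigma_\epsilon$ at $r=R+\delta$), but the middle of the computation is genuinely different from the paper's. The paper, after replacing $k_1$ by $\xi N_r\kappa$, rewrites $k_2$ through the identity $\II_q(p)=\xi^2(k_1 p_\theta^2 + k_2 p_s^2)$, then uses $\II_q(p)N=\nabla_p p$ and Lemma~\ref{lemmaChristoffel} to replace $\nabla_p p$ by $\dot p$ (paying an $O(\norm{p}^2 m^{-1/10})$ Christoffel-symbol error); the resulting integrand $\prodscal{\dot p}{e_r(t^\mathrm{in})}$ is a total time derivative, so the integral collapses directly to $\prodscal{p(t^\mathrm{out})-p(t^\mathrm{in})}{e_r(t^\mathrm{in})}$, which the symmetry evaluates as $-2p_s(t^\mathrm{in})+O(m^2)$. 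You instead change variables from $t$ to meridian arc length, interpret $k_2$ as the conformal geodesic curvature of the profile curve $\mathcal S$, express it through the Euclidean tangent angle $\alpha$ and the conformal correction $\partial_n\log\xi$, and integrate by parts via the exact form $d(\cos\alpha/\xi)$; the boundary term then plays the role that $p(t^\mathrm{out})-p(t^\mathrm{in})$ plays in the paper. The two calculations are equivalent in substance (both reduce the main term to a difference of endpoint quantities controlled by the reflection symmetry), and your version has the small advantage of making the role of the tangent angle and of $\xi$'s smoothness explicit where the paper hides them inside Lemma~\ref{lemmaChristoffel}, at the cost of having to carry the conformal-geodesic-curvature formula and the $d\xi$ bookkeeping. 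One point worth tightening: the boundary-term loss actually comes from $\alpha^\mathrm{top}=O(\epsilon)$, not $O(\delta)$ (the slope of the flattened profile at $r=R+\delta$ is $O(\epsilon)$), so $\cos\alpha^\mathrm{top}=1+O(\epsilon^2)$; your stated bound $1+O(\delta)$ is weaker but still comfortably below the $m^{1/3}$ target under the hierarchy $m\gg\delta\gg\epsilon$. Also note that Statement~3 of Lemma~\ref{lemmak2} bounds $\int k_2\,ds$ and not $\int\abs{k_2}\,ds$; you should add that the negative part contributes only $O(\delta/m^{1/4})$ (from Statement~2 and the $O(\delta)$ length of $\mathcal S$), which is negligible, so $\int\abs{k_2}\,ds = O(m^{-1/8})$ as you use.
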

\begin{proof}
Let us divide the problem into several steps by using the triangle inequality (the integrals are taken between the times $t^\mathrm{in}$ and $t^\mathrm{out}$).

\[ \begin{aligned} & \abs{ \int_{t^\mathrm{in}}^{t^\mathrm{out}} K(t) dt - \frac{2 \kappa}{\xi(t^\mathrm{in}) p_s(t^\mathrm{in})} }
\\ & \leq \abs{\int K(t)dt - \int k_1 k_2 dt} + \abs{\int k_1 k_2 dt - \int k_2 \xi N_r \kappa dt}
\\ & + \abs{\int k_2 \xi N_r \kappa dt - \int k_2 \xi \prodscal{N}{e_r(t^\mathrm{in})} \kappa dt}
\\ & + \abs{\int k_2 \xi \prodscal{N}{e_r(t^\mathrm{in})} \kappa dt - \int \frac{1}{\xi(t) p_s(t)^2} \mathrm{II}_q(p) \prodscal{N}{e_r(t^\mathrm{in})} \kappa dt}
\\ & + \abs{\int \frac{1}{\xi(t) p_s(t)^2} \mathrm{II}_q(p) \prodscal{N}{e_r(t^\mathrm{in})} \kappa dt - \int \frac{1}{\xi(t) p_s(t)^2} \prodscal{\dot p}{e_r(t^\mathrm{in})} \kappa dt}
\\ & + \abs{\int \frac{1}{\xi(t) p_s(t)^2} \prodscal{\dot p}{e_r(t^\mathrm{in})} \kappa dt - \int \frac{1}{\xi(t^\mathrm{in}) p_s(t^\mathrm{in})^2} \prodscal{\dot p}{e_r(t^\mathrm{in})} \kappa dt}
\\ & + \abs{\int \frac{1}{\xi(t^\mathrm{in}) p_s(t^\mathrm{in})^2} \prodscal{\dot p}{e_r(t^\mathrm{in})} \kappa dt - \frac{2\kappa}{\xi(t^\mathrm{in}) p_s(t^\mathrm{in})}}.
\end{aligned} \]

We will now show that each of the terms is smaller than $\sqrt{m}$.

\begin{enumerate}
\item With Lemma~\ref{lemmaTimeNG},
\[ \abs{\int K(t)dt - \int k_1 k_2 dt} = \abs{\int 1 dt} \leq 6\delta / m \]
\item With Lemma~\ref{lemmak1},
\[ \abs{\int k_1 k_2 dt - \int k_2 \xi N_r \kappa dt} \leq m\int k_2 dt = m \int \frac{k_2}{p_s} ds \leq 2 \int k_2 ds \]
Moreover, with Lemma~\ref{lemmak2}, $\int k_2 ds \leq 1 / m^{1/8}$, so
\[ \abs{\int K(t) - \int k_2 \xi N_r \kappa dt} \leq \sqrt{m}. \]
\item With Lemma~\ref{lemmaTimeNG}, we know that $e_r(t) - e_r(t^\mathrm{in}) \leq \sqrt{\delta}$. Thus:
\[ \begin{aligned} \abs{\int k_2 \xi N_r \kappa dt - \int k_2 \xi \prodscal{N}{e_r(t^\mathrm{in})} \kappa dt} & = \abs{\int k_2 \xi \prodscal{N}{e_r(t) - e_r(t^\mathrm{in})} \kappa dt} \\ & \leq \delta^{1/4} \int k_2 ds \\ & \leq \sqrt{m}. \end{aligned} \]
\item Using the fact that $\mathrm{II}_q(p) = \xi^2 (k_1 p_\theta^2 + k_2 p_s^2)$, we obtain:
\[ \begin{aligned} ~ & \abs{\int k_2 \xi \prodscal{N}{e_r(t^\mathrm{in})} \kappa dt - \int \frac{1}{\xi(t) p_s(t)^2} \mathrm{II}_q(p) \prodscal{N}{e_r(t^\mathrm{in})} \kappa dt} \\ & = \abs{\int k_1 \frac{p_\theta^2}{p_s^2} \xi \prodscal{N}{e_r(t^\mathrm{in})} \kappa dt} \\ & \leq \abs{\int \xi \kappa(1 + m) \frac{1}{\xi^2(m/2)^2} \xi \kappa dt} \leq \sqrt{m} \end{aligned} \]
\item With Lemma~\ref{lemmaChristoffel}, since $\mathrm{II}_q(p)N = \nabla_p p$, we have:
\[ \begin{aligned} & \abs{\int \frac{1}{\xi(t) p_s(t)^2} \mathrm{II}_q(p) \prodscal{N}{e_r(t^\mathrm{in})} \kappa dt - \int \frac{1}{\xi(t) p_s(t)^2} \prodscal{\dot p}{e_r(t^\mathrm{in})} \kappa dt}
\\ & \leq \int \frac{\kappa}{\xi(m/2)^2 m}dt \leq \sqrt{m} \end{aligned} \]
\item We use Lemma~\ref{lemmaNonGrazingPs}:
\[ \begin{aligned} & \abs{\int \frac{1}{\xi(t) p_s(t)^2} \prodscal{\dot p}{e_r(t^\mathrm{in})} \kappa dt - \int \frac{1}{\xi(t^\mathrm{in}) p_s(t^\mathrm{in})^2} \prodscal{\dot p}{e_r(t^\mathrm{in})} \kappa dt} \\ & \leq \abs{m^{2/3} \int \prodscal{\dot p}{e_r(t^\mathrm{in})}} \leq m^{2/3} \norm{p(t^\mathrm{out}) - p(t^\mathrm{in})} \leq \sqrt{m} \end{aligned} \]
\item Since the tube is symmetric, we have $\prodscal{p_r(t^\mathrm{in})}{e_r(t^\mathrm{in})} = - \prodscal{p_r(t^\mathrm{out})}{e_r(t^\mathrm{out})}$. Thus:
\[ \begin{aligned} & \abs{\int \frac{1}{\xi(t^\mathrm{in}) p_s(t^\mathrm{in})^2} \prodscal{\dot p}{e_r(t^\mathrm{in})} \kappa dt - \frac{2\kappa}{\xi(t^\mathrm{in}) p_s(t^\mathrm{in})}}
\\ & = \abs{\frac{1}{\xi(t^\mathrm{in}) p_s(t^\mathrm{in})^2} \kappa} \abs{ \prodscal{p(t^\mathrm{out}) - p(t^\mathrm{in})}{e_r(t^\mathrm{in})} - 2 \prodscal{p(t^\mathrm{in})}{e_s(t^\mathrm{in})}}
\\ & \leq \frac{2}{m} \kappa m^2 \leq \sqrt{m}. \end{aligned} \]
\end{enumerate}
\end{proof}

% We define $V = \setof{q \in \Sigma_\epsilon}{N_r \leq \delta}$.

In the following lemma, we consider the constant $r_c$ given by Lemma~\ref{lemmak2}.

\begin{lemme} \label{lemmaTimeRc}
In the tube, the time during which $r \geq r_c$ is smaller than $\delta^{1/3}$ (in other words, $\int_{r \geq r_c} dt \leq \delta^{1/3}$).
\end{lemme}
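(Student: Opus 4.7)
I would split the argument according to the size of $|p_s(t^\mathrm{in})|$. If $|p_s(t^\mathrm{in})| \geq m$, then Lemma~\ref{lemmaTimeNG} already bounds the whole time spent in the tube by $6\delta/m$, which is smaller than $\delta^{1/3}$ since $\delta \ll m^{1000}$ implies $\delta \ll m^{3/2}$; this case is essentially free.

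The real content is the case $|p_s(t^\mathrm{in})| < m$, where the trajectory is nearly tangent to the parallel circles and can a priori remain in $\mathcal T$ for a long time. Here the plan is to exploit the surface-of-revolution structure through the Clairaut first integral $L = \xi^2 r p_\theta$. Setting $\lambda(s) = \xi(r(s), z(s))\, r(s)$ (the $g_\mathrm{stereo}$-radius of the parallel circle at arc length $s$ on the profile curve $\mathcal S$), the Clairaut relation combined with the unit-speed condition $\dot s^2 + \lambda(s)^2 \dot\theta^2 = 1$ gives $p_s^2 = \dot s^2 = 1 - L^2/\lambda(s)^2$, and the entry condition $|p_s(t^\mathrm{in})| < m$ forces $L^2 > (1 - m^2)\,\lambda(s^\mathrm{in})^2$, so that $L$ is bounded below by a positive constant depending only on $R$.

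The key geometric step I would establish is that $\lambda$ is strictly increasing on the flat part $[t_c, s_{\max}]$ with slope bounded below by some $c_2 > 0$ depending only on $R$. This follows from Lemma~\ref{lemmak2}(2): on this range $|T_z| \leq 2m$, hence $|T_r| \geq \sqrt{1-4m^2}$, hence $|dr/ds| = |T_r|/\xi$ is uniformly bounded below; combining with $\partial_r \lambda > 0$ (valid because $R + \delta < 1$) and $|\partial_z \lambda| = O(\epsilon)$ yields the lower bound on $d\lambda/ds$. Denote by $s^* \in [t_c, s_{\max}]$ the unique solution of $\lambda(s^*) = L$ if one exists, and $s^* := t_c$ otherwise. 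The linear lower bound then gives $\lambda(s) - L \geq c_2(s - s^*)$ on the portion of the flat part traversed by the trajectory, whence $1 - L^2/\lambda(s)^2 \geq c_3 (s - s^*)$ with $c_3 > 0$ depending only on $R$ (using the lower bound on $L$).

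Integrating $dt = ds/|\dot s|$ and doubling to account for the two symmetric sides of the waist (when the trajectory passes through) or for the round-trip (when it has a turning point), one obtains $\int_{\{r \geq r_c\}} dt \leq 4\sqrt{(s_{\max} - s^*)/c_3} = O(\sqrt\delta)$, which is much smaller than $\delta^{1/3}$. The step I expect to need the most care is the case split on the location of the turning point (in the steep part $[0, t_c]$, in the flat part $[t_c, s_{\max}]$, or nonexistent when $L \leq \lambda(0)$): one has to check that in each scenario the trajectory restricted to $\{r \geq r_c\}$ decomposes into one or two intervals on which $\lambda$ is strictly monotone, so that the change of variable is legitimate and the same linear lower bound on $\lambda - L$ is available.
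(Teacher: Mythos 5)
Your plan is correct and rests on the same geometric mechanism as the paper's argument: the Clairaut first integral $L$ together with a quantitative monotonicity of the $g_\mathrm{stereo}$-radius of parallels on the flat part of the profile (the paper works with $f(r)=1/(\xi r)^2$ and the bound $f'(r)\le -\delta^{1/10}$; your $\lambda=\xi r$ with $d\lambda/ds\ge c_2$ is the same statement in a different variable), followed by a change of variable from $t$ to the spatial coordinate giving a square-root estimate. The casework is organized differently. You first dispatch $|p_s(t^\mathrm{in})|\ge m$ by appealing to Lemma~\ref{lemmaTimeNG}, and then in the near-grazing case extract a uniform lower bound $L\gtrsim \lambda(s^\mathrm{in})$, which lets you treat $L$ as a constant and gives a clean $O(\sqrt\delta)$ bound. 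The paper instead keeps $L$ free for all geodesics, writes $dr/dt=\pm N_z\sqrt{1-L^2 f(r)}$, and splits on whether the turning radius $r_0$ lies in $\{r\ge r_c\}$ and on whether $L\le\delta^{1/20}$ or $L\ge\delta^{1/20}$, arriving at a bound of order $\delta^{2/5}$. Both comfortably beat $\delta^{1/3}$. Your version is somewhat tidier since it reuses an existing lemma to dispose of the transverse case, but substantively the two proofs are the same. A small notational slip worth flagging: you write $p_s^2=\dot s^2=1-L^2/\lambda^2$, whereas with the paper's normalization $\dot s=\xi p_s$, so it is $\xi^2 p_s^2=1-L^2/\lambda^2$; this does not affect the conclusion since $\xi$ is bounded away from $0$ and $\infty$. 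Also make sure to state explicitly (as you gesture at) that no turning point can land in an interior point of the flat part when $s^*=t_c$, which follows from strict monotonicity of $\lambda$ there, so the change of variable is legitimate on each monotone piece.
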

\begin{proof}
First, we compute for $r \geq r_c$:
\[ \begin{aligned} \frac{dr}{dt} & = p_r
\\ & = \pm N_z \xi p_s
\\ & = \pm N_z \xi \sqrt{\frac{1}{\xi^2} - p_\theta^2}
\\ & = \pm N_z \xi \sqrt{\frac{1}{\xi^2} - \frac{L^2}{\xi^4 r^2}}
\\ & = \pm N_z \sqrt{1 - L^2 f(r)} \end{aligned} \]
where $f(r) = \frac{(1 + r^2 + h(r)^2)^2}{4r^2}$.

We compute:
\[ \begin{aligned} f'(r) & = 2(1 + r^2 + h(r)^2) (r^2 - 1 + h(r) (2h'(r) - h(r)))/r^3
\\ & \leq - \delta^{1/10}.  \end{aligned} \]

Thus for all $r_0 \geq r_c$ we may write:
\[ f(r) \leq f(r_0) - \delta^{1/10} (r-r_0). \]

Moreover, $f(r) \leq \delta^{-1/20}$.

\subparagraph{First case.} We assume that there exists $r_0 \geq r_c$ such that $1 - L^2 f(r_0) = 0$. Then the geodesic has the following life: $r$ decreases from $R + \delta$ to $r_0$, reaches $r_0$ at some time $t_0$, and then increases from $r_0$ to $R + \delta$. In this case, the length of the geodesic is
\[ \begin{aligned} 2 (t_0 - t^\mathrm{in})
& = 2 \int_{t^\mathrm{in}}^{t_0} dt
\\ & = 2 \int_{r_0}^{R + \delta} \frac{1}{- dr / dt} dr
\\ & = 2\int_{r_0}^{R + \delta} \frac{1}{\abs{N_z} \sqrt{1 - L^2 f(r)}} dr
\\ & \leq 2 \int_{r_0}^{R + \delta} \frac{2}{\sqrt{1 - L^2 f(r_0) - L^2 \delta^{1/10} (r - r_0)}} dr
\\ & \leq 4 \int_{r_0}^{R + \delta} \frac{1}{\sqrt{L^2 \delta^{1/10} (r - r_0)}} dr
\\ & \leq \frac{8 \sqrt{R + \delta - r_0}}{L \delta^{1/20}} \end{aligned}
\]
Since $1 - L^2 f(r_0) = 0$, we have $L = 1 / f(r_0)$ and thus $L \geq \delta^{1/20}$. Hence \[ 2 (t^\mathrm{out} - t_0) \leq 8 \frac{\sqrt{\delta}}{\delta^{1/10}} \leq \delta^{1/3}. \]

\paragraph{Second case.} We assume that $1 - L^2 f(r) > 0$ for all $r \in (r_c, R + \delta)$. Then the geodesic goes through the zone $r \geq r_c$ and enters the zone $r \leq r_c$ at some time $t_c$. Then either it remains in this zone for all times, or it exits this zone and goes through the zone $r \geq r_c$ once more. Thus, the time spent in the zone $r \geq r_c$ is at most

\[ \begin{aligned} 2 \int_{t^\mathrm{in}}^{t} dt & = 2 \int_{r_c}^{R + \delta} \frac{1}{- dr / dt} dr
\\ & = 2\int_{r_0}^{R + \delta} \frac{1}{\abs{N_z} \sqrt{1 - L^2 f(r)}} dr.
\end{aligned}
\]

If $L \leq \delta^{1/20}$ then \[ 2 \int_{t^\mathrm{in}}^{t} dt \leq 4 \int_{r_c}^{R + \delta} \frac{1}{\sqrt{1 - \delta^{1/10} \delta^{-1/20}}} dr \leq \delta^{1/3} \] which concludes the proof. Now assuming that $L \geq \delta^{1/20}$, we compute:

\[ \begin{aligned}
2 \int_{t^\mathrm{in}}^{t} dt & \leq 4 \int_{r_c}^{R + \delta} \frac{1}{\sqrt{1 - L^2 f(r_c) - L^2 \delta^{1/10} (r - r_c)}} dr
\\ & \leq \frac{8 \sqrt{R + \delta - r_c}}{L \delta^{1/20}}
\\ & \leq \delta^{1/3}
\end{aligned}
\]

\end{proof}

\begin{lemme} \label{lemmaCurvatureAlmostNegative}
The Gauss curvature $K$ of $\Sigma_\epsilon$ satisfies $K \leq \frac{1}{m^{1/3}}$. Moreover, $K \leq -1/\epsilon$ in the zone $r \leq  r_c$.
\end{lemme}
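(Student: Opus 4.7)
The plan is to write $K = k_1 k_2 + 1$ (the Gauss formula for a surface in $\mathbb S^3$) and combine the bounds on $k_1$ from Lemma~\ref{lemmak1} with the bounds on $k_2$ from Lemma~\ref{lemmak2}, arguing separately in three regions: outside every tube $\mathcal T$, inside a tube with $r \geq r_c$, and in the waist zone $r \leq r_c$.

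For the global bound $K \leq 1/m^{1/3}$, I would first note that outside every tube the flattening $\Sigma_\epsilon = f_\epsilon(\Sigma)$ is $C^2$-close to $\pi(\Sigma) = D \subset \mathbb S^2$ (using the transversality assumption~1 of Theorem~\ref{thmAnosov}), so $K$ is close to $1$ and the bound is immediate. Inside a tube, Lemma~\ref{lemmak1} gives $k_1 \leq 0$ with $|k_1| \leq |\xi N_r \kappa| + m^2$ uniformly bounded by a constant $C_0$ depending only on $D$ and $\Sigma$, and Lemma~\ref{lemmak2}, Statement~2, gives $k_2 \geq -1/m^{1/4}$. The product $k_1 k_2$ is non-positive when $k_2 \geq 0$, and otherwise $k_1 k_2 \leq C_0/m^{1/4}$, so $K \leq 1 + C_0/m^{1/4} \leq 1/m^{1/3}$ for $m$ small enough.

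For the refinement $K \leq -1/\epsilon$ in the waist zone $r \leq r_c$, the plan is to combine $k_2 \geq m^5/\epsilon^2$ (Lemma~\ref{lemmak2}, Statement~1) with a uniform strictly negative upper bound $k_1 \leq -c_0$ for some constant $c_0 > 0$. This gives $k_1 k_2 \leq -c_0 m^5/\epsilon^2$, and the hierarchy $\epsilon \ll m^{1000}$ forces $c_0 m^5/\epsilon^2 \gg 1/\epsilon$, so $K = k_1 k_2 + 1 \leq -1/\epsilon$ at once.

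The step I expect to be the main obstacle is the uniform bound $k_1 \leq -c_0$ in the waist zone, since Lemma~\ref{lemmak1} only delivers $k_1 \leq 0$ together with the approximate identity $k_1 = \xi N_r \kappa + O(m^2)$. The content is therefore to show that $|\xi N_r \kappa|$ stays bounded below by a fixed positive constant when $r \leq r_c$. The geometric input is: by the rotational symmetry of the tube (assumption~3 of Theorem~\ref{thmAnosov}), in the waist region the meridian tangent is nearly vertical, so the outward unit normal $N$ is nearly radial and $|\xi N_r|$ is close to $1$; meanwhile $\kappa$, the curvature of the waist for $g_\mathrm{stereo}$, is a fixed nonzero constant determined by the obstacle $\Delta_{i_0}$. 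Combined with $k_1 \leq 0$ and the $m^2$ error, this pins $k_1 \leq -c_0$ for a suitable $c_0 > 0$, after which the proof is just the combination of the inequalities described above.
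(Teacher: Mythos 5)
Your three-zone decomposition and the overall strategy ($K = k_1k_2 + 1$, combine Lemma~\ref{lemmak1} with Lemma~\ref{lemmak2}) is exactly the paper's, and the bound $K \leq 1/m^{1/3}$ in the zone $r \geq r_c$ is handled correctly and even a bit more explicitly than in the paper. But the key step in the waist zone is not quite right, and you correctly flagged it as the likely obstacle.

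The claim that ``in the waist region the meridian tangent is nearly vertical, so the outward unit normal $N$ is nearly radial and $|\xi N_r|$ is close to $1$'' does not hold throughout $r \leq r_c$. It is true at the tip $r = R$, but $r_c = r(t_c)$ is precisely the point where, by Lemma~\ref{lemmak2}, $T_z(t_c) \approx m$; so as $r$ approaches $r_c$ from below, the meridian tangent rotates to nearly horizontal and $N_r = T_z$ decays to order $m$, not a constant. Consequently a bound $k_1 \leq -c_0$ for a fixed $c_0 > 0$ independent of $m$ is false. What Statement~1 of Lemma~\ref{lemmak2} actually gives in the zone $r \leq r_c$ is $T_z \geq m/2$, hence $N_r \geq m/2$; combined with $|k_1 - \xi N_r \kappa| \leq m^2$ from Lemma~\ref{lemmak1}, one gets $k_1 \leq -\kappa m/2$ (this is the inequality the paper uses), not $k_1 \leq -c_0$.

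This is a genuine misstep in the justification, but it does not affect the conclusion: with $k_2 \geq m^5/\epsilon^2$ and $k_1 \leq -\kappa m/2$ you obtain $K \leq -\kappa m^6/(2\epsilon^2) + 1$, and the hierarchy $\epsilon \ll m^{1000}$ still forces $K \leq -1/\epsilon$. So you should replace the appeal to ``$|\xi N_r|$ close to $1$'' by the weaker (and correct) $N_r \geq m/2$; the rest of your argument then goes through as written.
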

\begin{proof}
In the zone $r \geq r_c$, we have $k_1 \leq 0$ (Lemma~\ref{lemmak1}) and $k_2 \geq -1/m^{1/4}$ (Lemma~\ref{lemmak2}), so that $K = k_1 k_2 + 1 \leq 1 / m^{1/3}$.

In the zone $r \leq r_c$, we have $k_1 \leq - \kappa m / 2$ (Lemmas~\ref{lemmak1} and~\ref{lemmak2}) and $k_2 \geq m^5/\epsilon^2$. Thus, in this zone $K = k_1 k_2 + 1 \leq - \kappa m^6 / (2\epsilon^2) + 1$. In particular, $K \leq -1/\epsilon$.
\end{proof}

%
%
%\begin{lemme}
%Assume that the time spent in the tube $S$ is more than $m$. Then the Gauss curvature $K$ of $\Sigma_\epsilon$ satisfies
%\[ \int_{t^\mathrm{in}}^{t^\mathrm{out}} K(t) \leq -1/\epsilon. \]
%
%%Moreover, its positive part $K^+ = \max (K, 0)$ satisfies
%%\[ \int_{t^\mathrm{in}}^{t^\mathrm{out}} K^+(t) \leq \delta^{1/10}. \]
%\end{lemme}
%\begin{proof}
%% In the zone $r \geq r_c$, we have $k_1 \leq 0$ (Lemma~\ref{lemmak1}) and $k_2 \geq -1/m^{1/4}$ (Lemma~\ref{lemmak2}), so that $K = k_1 k_2 + 1 \leq 1 / m^{1/4} + 1$. Moreover, the time spent in $r \geq r_c$ is smaller than $\delta^{1/3}$ (Lemma~\ref{lemmaTimeRc}). Thus,
%Using Lemma~\ref{lemmaCurvatureAlmostNegative}, we have:
%
%\[ \int_{\substack{{t^\mathrm{in} \leq t \leq t^\mathrm{out}} \\ r \geq r_c}} K(t) \leq \frac{1}{m^{1/3}} . \]
%
%In the zone $r \geq r_c$, we have $k_1 \leq - \kappa \delta / 8$ (Lemmas~\ref{lemmak1} and~\ref{lemmak2}) and $k_2 \geq \delta^4/\epsilon^2$. Thus, in this zone $K = k_1 k_2 + 1 \leq - \kappa \delta^5 / (8\epsilon^2) + 1$. Since the time spent in the zone $r \leq r_c$ is at least $m - \delta^{1/3}$, we have
%\[ \int_{\substack{{t^\mathrm{in} \leq t \leq t^\mathrm{out}} \\ r \leq r_c}} K(t) \leq - (m - \delta^{1/3}) \kappa \delta^5 / (8\epsilon^2) + 1 \leq - 1 / (2\epsilon). \]
%
%Finally,
%\[ \int_{t^\mathrm{in}}^{t^\mathrm{out}} K(t) \leq -1/\epsilon. \]
%\end{proof}

\begin{lemme} \label{lemmaU}
In this lemma, we consider a geodesic $(q(t), p(t))_{t \in [t^\mathrm{in}, t^\mathrm{out}]}$ in the tube $\mathcal T$, but we do not assume that $q(t^\mathrm{in})$ or $q(t^\mathrm{out})$ is on the boundary of $\mathcal T$. Consider a solution $u$ of the Riccati equation $u'(t) = -K(t) - u(t)^2$ such that $\abs{u(t^\mathrm{in})} \leq 1/m^2$. Then
\begin{enumerate}
\item $u(t^\mathrm{out}) \geq u(t^\mathrm{in}) - m$;
\item if the time spent in the tube $\mathcal T$ is at least $m$, then $u(t^\mathrm{out}) \geq 1/m^2$.
\end{enumerate}
\end{lemme}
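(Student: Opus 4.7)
The plan is to use the two-zone decomposition of the tube from Lemma~\ref{lemmaCurvatureAlmostNegative}: an outer zone $\{r \geq r_c\}$ on which $K \leq 1/m^{1/3}$, and an inner zone $\{r \leq r_c\}$ on which $K \leq -1/\epsilon$. On each zone I would compare the Riccati equation $u' = -K - u^2$ with a constant-coefficient Riccati equation and exploit the explicit solutions. The total duration along the geodesic spent in the outer zone is bounded by $\delta^{1/3}$ by Lemma~\ref{lemmaTimeRc}, and the hierarchy $\epsilon \ll \delta^{1000} \ll m^{1000}$ makes several small factors negligible.

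On the outer zone, comparison with $v' = -1/m^{1/3} - v^2$ yields the explicit formula $v(t_0 + \tau) = \omega \tan(\arctan(v(t_0)/\omega) - \omega \tau)$ with $\omega = 1/m^{1/6}$; since $\omega \tau \leq \delta^{1/3}/m^{1/6} \ll 1$, two regimes appear. When $u_0$ is moderate (say $|u_0| \leq 1/m^2$), the decrease of $u$ during the passage is at most $O((\omega^2 + u_0^2)\tau) = O(\delta^{1/3}/m^4) \ll m$. When $u_0$ is very large, the asymptotic of the tangent formula is $u(\tau) \approx 1/(\tau + 1/u_0)$, so $u$ decreases monotonically but still satisfies $u(\tau) \geq 1/(2\tau) \geq 1/(2\delta^{1/3})$. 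On the inner zone, comparison with $v' = 1/\epsilon - v^2$ gives a $\tanh$-type solution with attractor $+1/\sqrt{\epsilon}$; since $|u(t_B)|\sqrt{\epsilon} \ll 1$, this solution is monotonically increasing from $u(t_B)$ toward $+1/\sqrt{\epsilon}$, so $u(t_B') \geq u(t_B)$, and once the duration in the inner zone exceeds $O(\sqrt{\epsilon})$ one has $u(t_B') \geq 1/(2\sqrt{\epsilon})$.

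To conclude: if the geodesic never enters the inner zone, the outer-zone estimate gives Statement~1 directly, and Statement~2 is vacuous since the total duration is $\leq \delta^{1/3} \ll m$. Otherwise, denote by $t_B, t_B'$ the entry and exit times of the inner zone; I combine the three phases to obtain $u(t_B) \geq u(t^\mathrm{in}) - m/2$, $u(t_B') \geq u(t_B)$, and then two sub-cases. Either $u(t_B')$ remains $\leq 1/m^2$ and the second outer-zone estimate gives $u(t^\mathrm{out}) \geq u(t_B') - m/2$, or $u(t_B') > 1/m^2$ and the tangent asymptotic gives $u(t^\mathrm{out}) \geq 1/(2\delta^{1/3}) \gg 1/m^2 \geq u(t^\mathrm{in})$; in either sub-case Statement~1 holds. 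For Statement~2, the hypothesis $t^\mathrm{out} - t^\mathrm{in} \geq m$ forces the inner-zone duration to be $\geq m/2 \gg \sqrt{\epsilon}$, so the $\tanh$ saturates and $u(t_B') \geq 1/(2\sqrt{\epsilon})$; applying the tangent asymptotic with $\tau \leq \delta^{1/3}$ and $1/u_0 \leq 2\sqrt{\epsilon} \ll \delta^{1/3}$ yields $u(t^\mathrm{out}) \geq 1/(2\delta^{1/3}) \gg 1/m^2$.

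The main obstacle is precisely this final outer-zone passage with $u(t_B')$ of order $1/\sqrt{\epsilon}$: a naive bound on the Riccati increment would allow $u$ to drop by $u_0^2 \tau \sim \delta^{1/3}/\epsilon$, which is catastrophic. The saving grace is the exact tangent formula, or equivalently the observation that $1/u$ grows at rate $\approx 1$ under $u' \approx -u^2$, so that $u$ decreases only like $1/t$ and the large initial value $1/\sqrt{\epsilon}$ still leaves $u$ safely above $1/m^2$ thanks to $\epsilon \ll \delta^{1000}$.
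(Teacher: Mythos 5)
Your proposal is correct and follows essentially the same approach as the paper's: the same inner/outer decomposition at $r = r_c$, the same reliance on Lemmas~\ref{lemmaTimeRc} and~\ref{lemmaCurvatureAlmostNegative} for the time and curvature bounds, and the same Riccati comparison philosophy, with the paper using stopping times at $\pm 2/m^2$ together with crude integral estimates where you instead use the explicit tangent and $\tanh$ solutions of the constant-coefficient comparison equations. The only slip is that the bound $u(t^\mathrm{out}) \geq 1/(2\delta^{1/3})$ in your second sub-case is valid only when $u(t_B')$ is at least of order $1/\delta^{1/3}$; for $u(t_B')$ merely above $1/m^2$ the same tangent asymptotic gives $1/u(t^\mathrm{out}) \leq 1/u(t_B') + \delta^{1/3} < m^2$, which still suffices for both statements.
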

\begin{proof}
Let $t^1 = \sup\setof{t \in [t^\mathrm{in}, t^\mathrm{out}]}{ u(t) \geq 2/m^2}$ (if this set is empty, let $t^1 = t^\mathrm{in}$), and $t^2 = \inf\setof{t \in [t^1, t^\mathrm{out}]}{ u(t) \leq - 2/m^2}$ (if this set is empty, let $t^2 = t^\mathrm{out}$).

There is a (possibly empty) interval $(t^3, t^4) \subseteq (t^1, t^\mathrm{out})$ such that, for all $t \in (t^1, t^\mathrm{out})$, $r(t) < r_c$ if and only if $t \in (t^3, t^4)$.

Assume that $t^2 \leq t^3$. Then using Lemmas~\ref{lemmaCurvatureAlmostNegative} and~\ref{lemmaTimeRc}, we have:

\[ \begin{aligned} u(t^2) & = u(t^1) + \int_{t^1}^{t^2} - K(t) - u(t)^2 dt
\\ & \geq u(t^1) - \frac{\delta^{1/3}}{m^{1/3}} - \frac{4\delta^{1/3}}{m^4}
\\ & \geq u(t^1) - m/2 \end{aligned} \]
which contradicts the fact that $u(t^2) \leq -2/m^2$. Thus, $t^2 \geq t^3$ and $u(t^3) \geq u(t^1) - m/2$.

For $t \in (t^3, t^4)$, we have:
\[ u'(t) = -K(t) - u(t)^2 \geq \frac{1}{\epsilon} - 4 / m^4 \geq 1/m^5 \]
which implies that $t^2 \geq t^4$ and $u(t^4) \geq u(t^1) - m/2 + (t^4 - t^3)/m^5$. Moreover, $u(t^4) \leq 2 / m^2$ (because $t^1 \leq t^4$) so $t^4 - t^3 \leq m^2$. If $t^\mathrm{out} - t^\mathrm{in} \geq m$, this implies that $t^1 > t^\mathrm{in}$ and thus $u(t^1) \geq 2/m^2$.

Finally,
\[ \begin{aligned} u(t^2) & = u(t^4) + \int_{t^4}^{t^2} - K(t) - u(t)^2 dt
\\ & \geq u(t^1) - m/2 - \frac{\delta^{1/3}}{m^{1/3}} - \frac{4\delta^{1/3}}{m^4}
\\ & \geq u(t^1) - m \end{aligned} \]
and thus $t^2 = t^\mathrm{out}$ and $u(t^\mathrm{out}) \geq u(t^1) - m \geq u(t^1) - m$.

If the time spent in $\mathcal T$ is at least $m$, then $u(t^1) \geq 2/m^2$ and thus $u(t^\mathrm{out}) \geq 1/m^2$.
\end{proof}

\begin{lemme} \label{lemmaUNG}
Assume that $\abs{p_s(t^\mathrm{in})} \geq m$. Consider a solution $u$ of the Riccati equation $u'(t) = -K(t) - u(t)^2$ such that $\abs{u(t^\mathrm{in})} \leq 2/m^2$. Then:
\[ \abs{u(t^\mathrm{out}) - u(t^\mathrm{in}) - \frac{2 \kappa}{\xi(t^\mathrm{in}) p_s(t^\mathrm{in})}} \leq m^{1/4}. \]
\end{lemme}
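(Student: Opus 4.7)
The plan is to integrate the Riccati equation across the tube, use Lemma~\ref{lemmaNonGrazingK} to identify $\int K\,dt$, and reduce the entire problem to showing that $\int u^2\,dt$ is negligible. Integrating $u'(t) = -K(t) - u(t)^2$ from $t^\mathrm{in}$ to $t^\mathrm{out}$ gives
\[
u(t^\mathrm{out}) - u(t^\mathrm{in}) + \int_{t^\mathrm{in}}^{t^\mathrm{out}} K(t)\,dt = -\int_{t^\mathrm{in}}^{t^\mathrm{out}} u(t)^2\,dt.
\]
Lemma~\ref{lemmaNonGrazingK}, whose hypothesis $|p_s(t^\mathrm{in})| \geq m$ is precisely the one of the current lemma, identifies the curvature integral with $\tfrac{2\kappa}{\xi(t^\mathrm{in}) p_s(t^\mathrm{in})}$ up to an error at most $m^{1/3}$. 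The task therefore reduces to showing that $\int u^2\,dt$ is of size at most $m^{1/4}$ (with the appropriate sign playing into the final estimate).

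The core estimate is a uniform bound $|u(t)| \leq 3/m^2$ throughout $[t^\mathrm{in}, t^\mathrm{out}]$, which I would obtain by a continuity/bootstrap argument. Writing $u(t) = u(t^\mathrm{in}) - \int_{t^\mathrm{in}}^t K\,ds - \int_{t^\mathrm{in}}^t u^2\,ds$ and applying the triangle inequality,
\[
|u(t)| \leq |u(t^\mathrm{in})| + \int_{t^\mathrm{in}}^t |K(s)|\,ds + \Bigl(\sup_{s \in [t^\mathrm{in}, t]} |u(s)|\Bigr)^2 \cdot (t - t^\mathrm{in}).
\]
By hypothesis $|u(t^\mathrm{in})| \leq 2/m^2$ and by Lemma~\ref{lemmaTimeNG} we have $t^\mathrm{out} - t^\mathrm{in} \leq 6\delta/m$, so the quadratic self-interacting term is at most $(3/m^2)^2 \cdot (6\delta/m) = 54\delta/m^5$, negligible under the assumption $\delta \ll m^{1000}$. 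The delicate ingredient is the bound $\int_{t^\mathrm{in}}^{t^\mathrm{out}} |K|\,ds = O(1/m)$: pointwise $|K|$ may reach $1/\epsilon^2$, so a crude product bound is useless. Instead, in the zone $\{r \leq r_c\}$ one has $K < 0$ by Lemma~\ref{lemmaCurvatureAlmostNegative}, so $\int_{r \leq r_c} |K|\,ds = -\int_{r \leq r_c} K\,ds$; this agrees with $-\int_{t^\mathrm{in}}^{t^\mathrm{out}} K\,ds$ up to the contribution from $\{r \geq r_c\}$, which is bounded by $\delta^{1/3}/m^{1/3}$ using Lemmas~\ref{lemmaTimeRc} and~\ref{lemmaCurvatureAlmostNegative}. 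The signed integral is then controlled by Lemma~\ref{lemmaNonGrazingK} as $|2\kappa/(\xi(t^\mathrm{in}) p_s(t^\mathrm{in}))| + m^{1/3} = O(1/m)$.

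With these inputs the bootstrap closes: $|u(t)| \leq 2/m^2 + O(1/m) + 54\delta/m^5 < 3/m^2$ for $m$ small enough, so $|u| \leq 3/m^2$ throughout the traversal. Substituting back, $\int u^2\,dt \leq 9/m^4 \cdot 6\delta/m = 54\delta/m^5$, which is much smaller than $m^{1/4}$. Combined with the $m^{1/3}$ error from Lemma~\ref{lemmaNonGrazingK}, this yields the announced estimate $|u(t^\mathrm{out}) - u(t^\mathrm{in}) - \tfrac{2\kappa}{\xi(t^\mathrm{in}) p_s(t^\mathrm{in})}| \leq m^{1/4}$. The hard part will be the $O(1/m)$ bound on $\int |K|\,ds$: it relies essentially on combining the signed cancellation from Lemma~\ref{lemmaNonGrazingK} with the definite sign of $K$ on $\{r \leq r_c\}$ supplied by Lemma~\ref{lemmaCurvatureAlmostNegative}, since a pointwise argument is hopeless.
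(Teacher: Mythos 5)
Your proposal follows essentially the same route as the paper: integrate the Riccati equation across the tube, feed in Lemma~\ref{lemmaNonGrazingK} for the signed curvature integral, and reduce to bounding $\int u^2\,dt$, which you handle by first establishing a uniform a priori bound on $\abs{u}$ via a bootstrap and then multiplying by the short traversal time from Lemma~\ref{lemmaTimeNG}. The paper does exactly this, setting $t^1 = \inf\setof{t}{\abs{u(t)} \geq 2/m^2}$ and deriving a contradiction from the integral inequality; your choice of $3/m^2$ as the bootstrap threshold is in fact slightly cleaner, since the paper's chain ends with ``$< 2/m^2$'' while its hypothesis already allows $\abs{u(t^\mathrm{in})}$ to equal $2/m^2$.

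One wrinkle in your treatment of $\int\abs{K}\,dt$: you claim $\int_{r\geq r_c}\abs{K}\,dt \leq \delta^{1/3}/m^{1/3}$ using Lemmas~\ref{lemmaTimeRc} and~\ref{lemmaCurvatureAlmostNegative}, but Lemma~\ref{lemmaCurvatureAlmostNegative} gives only a \emph{one-sided} bound $K \leq 1/m^{1/3}$, not $\abs{K}\leq 1/m^{1/3}$, and no pointwise lower bound on $K$ is available in the zone $r\geq r_c$ (where $k_2$ has a lower bound but no stated upper bound). The paper sidesteps this by writing $\int\abs{K} = 2\int K^+ - \int K$: the first term needs only the one-sided bound $K^+\leq 1/m^{1/3}$ together with the total time bound $6\delta/m$, and the second is handled by Lemma~\ref{lemmaNonGrazingK}. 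Your idea is morally the same (the positive part of $K$ lives where you localize), and the fix is cosmetic: replace your $\int_{r\geq r_c}\abs{K}$ estimate by $\int K^+ \leq (6\delta/m)\cdot(1/m^{1/3})$ and the rest of your argument goes through unchanged.
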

\begin{proof}
Let $t^1 = \inf\setof{t \in [t^\mathrm{in}, t^\mathrm{out}]}{\abs{u(t^\mathrm{in})} \geq 2/m^2}$ (if this set is empty, let $t^1 = t^\mathrm{out}$). We write $K = K^+ - K^-$, where $K^+ = \max (K, 0)$ is the positive part of $K$ and $K^- = max (-K, 0)$ is the negative part. Then, using Lemmas~\ref{lemmaCurvatureAlmostNegative}, \ref{lemmaNonGrazingK} and~\ref{lemmaTimeNG},

\[ \begin{aligned} u(t^1) & = u(t^\mathrm{in}) + \int_{t^\mathrm{in}}^{t^1} -K(t) - u(t)^2 dt
\\ \abs{u(t^1)} & \leq \abs{u(t^\mathrm{in})} + \int_{t^\mathrm{in}}^{t^\mathrm{out}} \abs{K(t)} + \abs{u(t)}^2 dt
\\ \abs{u(t^1)} & \leq \abs{u(t^\mathrm{in})} + 2 \int_{t^\mathrm{in}}^{t^\mathrm{out}} K^+(t) dt - \int_{t^\mathrm{in}}^{t^\mathrm{out}} K(t) dt +  \int_{t^\mathrm{in}}^{t^\mathrm{out}} \abs{u(t)}^2 dt
\\ & \leq \frac{1}{m^2} + 2 \cdot \frac{6\delta}{m} \cdot \frac{1}{m^{1/3}} + \frac{2 \abs{\kappa}}{m} + m^{1/3} + \frac{6\delta}{m} \cdot \frac{4}{m^4}
\\ & < \frac{2}{m^2}
\end{aligned} \]

Thus $t^1 = t^\mathrm{out}$ and
\[ \small \begin{aligned} \abs{u(t^\mathrm{out}) - u(t^\mathrm{in}) - \frac{2 \kappa}{\xi(t^\mathrm{in}) p_s(t^\mathrm{in})}} & = \abs{u(t^\mathrm{out}) - u(t^\mathrm{in}) - \int_{t^\mathrm{in}}^{t^\mathrm{out}} K(t)dt} + \abs{\int_{t^\mathrm{in}}^{t^\mathrm{out}} K(t)dt -  \frac{2 \kappa}{\xi(t^\mathrm{in}) p_s(t^\mathrm{in})}} \\ & \leq \frac{6\delta}{m} \cdot \frac{4}{m^4} + m^{1/3}
\\ & \leq m^{1/4} \end{aligned} \]
\end{proof}

% \noindent\textit{End of the proof of Theorem~\ref{mainThm}.} We assume that the billiard $D$ satisfies the cone property for the time $t_0 > 0$ and the constant $\zeta > 0$. Consider a geodesic $(q(t), p(t))$ in $\Sigma_\epsilon$ and the solution $u_{\Sigma_\epsilon}$ of the Riccati equation on $\Sigma_\epsilon$. We will show that $u_{\Sigma_\epsilon}(2 t_0) > \zeta / 2$.

\section{End of the proof of Theorem~\ref{thmAnosov}} \label{sectEndProof}

\begin{thm} \label{lemmaAlmostGeodesic}
We say that a curve $\varphi : [a, b] \to D$ in the billiard $D$ is ``$\eta$-almost a geodesic'' if
\begin{enumerate}
\item $\norm{\phi'(t)}_g \leq 1$ for all $t \in [a, b]$,
\item $d(\varphi(b), \varphi(a)) \geq b - a - \eta$,
\end{enumerate}
where $d$ is the Riemannian distance and $g$ the Riemannian metric in the sphere $\mathbb S^2$.

Consider $H$ the horizon of $D$. Then there exists $\eta > 0$ such that for all $\eta$-almost geodesic $\varphi$, $a-b < H + \nu$.
\end{thm}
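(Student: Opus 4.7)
The plan is a compactness argument by contradiction: failure of the conclusion produces a genuine geodesic arc of $\mathbb{S}^2$ of length greater than $H$ inside $D$, contradicting the definition of the horizon. (I read the printed inequality $a - b < H + \nu$ as a typo for $b - a < H + \nu$, since $\varphi$ is defined on $[a,b]$.)

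Suppose the statement fails. Then for every integer $n \geq 1$ there is a $(1/n)$-almost geodesic $\varphi_n : [a_n, b_n] \to D$ with $L_n := b_n - a_n \geq H + \nu$. Translating, I may assume $a_n = 0$. Since $L_n \leq d(\varphi_n(L_n), \varphi_n(0)) + 1/n \leq \pi + 1$, the sequence $(L_n)$ is bounded, so after extracting a subsequence $L_n \to L \in [H + \nu, \pi]$. Each $\varphi_n$ is $1$-Lipschitz into the compact set $D$; I would reparametrize $\tilde\varphi_n(s) = \varphi_n(L_n s)$ on the common interval $[0,1]$, on which the family is uniformly Lipschitz. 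By Arzel\`a--Ascoli, a subsequence converges uniformly to some $\tilde\varphi : [0,1] \to D$; the image lies in $D$ because $D$ is closed, being the complement of the open disks $\Delta_i$. Rescaling back yields a $1$-Lipschitz curve $\varphi : [0, L] \to D$ with $\norm{\varphi'(t)}_g \leq 1$ almost everywhere.

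Passing to the limit in $d(\varphi_n(b_n), \varphi_n(a_n)) \geq L_n - 1/n$ gives $d(\varphi(L), \varphi(0)) \geq L$. The length of $\varphi$ is then at most $L$ (from the speed bound) and at least $d(\varphi(L), \varphi(0)) \geq L$, so it equals $L$ and equals the endpoint distance. Hence $\varphi$ is a unit-speed curve realizing the distance between its endpoints, i.e.\ a geodesic arc of $\mathbb{S}^2$. We have thus produced a geodesic of $\mathbb{S}^2$ of length $L \geq H + \nu > H$ entirely contained in $D$, contradicting the definition of $H$ as the supremum of lengths of such geodesic arcs.

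The only technical subtlety is the Arzel\`a--Ascoli extraction on varying domains, handled by the reparametrization onto $[0,1]$ above; the rest is soft, relying only on the closedness of $D$ and the standard characterization of unit-speed distance-realizing curves as geodesics of $\mathbb{S}^2$.
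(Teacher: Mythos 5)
Your proposal is correct and takes essentially the same approach as the paper: a contradiction argument via Arzelà--Ascoli, extracting a limiting curve that is a genuine geodesic of $\mathbb S^2$ of length exceeding $H$ inside $D$. You fill in the details the paper leaves implicit (normalizing the varying domains, closedness of $D$, and the length--distance equality identifying the limit as a minimizing geodesic), but the underlying argument is the same.
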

\begin{proof}
Assume that the conclusion is false. Then there exists a sequence $\varphi_n$ of $\frac{1}{n}$-almost geodesics such that $a = 0$ and $b = H + \nu$. By the Arzelà-Ascoli theorem, the sequence $\varphi_n$ converges in the $C^0$-topology to a curve $\varphi : [0,H+\nu] \to D$ which is a real geodesic in $D$. This contradicts the definition of $H$.
\end{proof}

\begin{proof}[End of the proof of Theorem~\ref{thmAnosov}]
Consider a geodesic $(q(t), p(t))_{t \in [0, H + 2 \nu]}$.

During its lifetime, the geodesic enters and exits the tubes. We will say that the tube is \emph{almost avoided} if the two following conditions are satisfied:
\begin{enumerate}
\item $\abs{p_s(t^\mathrm{in})} \leq m$;
\item $t^\mathrm{out} - t^\mathrm{in} \leq m$.
\end{enumerate}

\begin{lemme} \label{lemmaHorizon}
If the geodesic \emph{almost avoids} all the tubes in a time interval $(t^1, t^2)$, then $t^2 - t^1 \leq H + \nu$.
\end{lemme}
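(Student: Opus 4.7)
The plan is to argue by contradiction, reducing to Theorem~\ref{lemmaAlmostGeodesic}. Suppose that $t^2 - t^1 > H + \nu$, and let $\eta > 0$ be the constant produced by Theorem~\ref{lemmaAlmostGeodesic}. I will construct an $\eta$-almost geodesic $\varphi : [t^1, t^1 + H + \nu] \to D$ of length exactly $H + \nu$, which contradicts the strict bound $b - a < H + \nu$ provided by that theorem.

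The natural candidate is $\varphi(t) = \pi(q(t))$. The speed bound $\norm{\varphi'(t)}_g \leq 1$ follows because $q$ has unit speed on $(\Sigma_\epsilon, g_\mathrm{stereo})$ and $\pi$ is $1$-Lipschitz from a neighborhood of $\mathbb S^2$ in $(\mathbb S^3, g_\mathrm{stereo})$ onto $(\mathbb S^2, g)$, up to an $O(\epsilon)$ correction due to the fact that $\Sigma_\epsilon$ is not exactly on $\mathbb S^2$. The real work is the endpoint-distance bound $d(\varphi(t^1 + H + \nu), \varphi(t^1)) \geq H + \nu - \eta$. I will show that $\varphi$ stays $O(m)$-close to the great-circle arc $\gamma_0$ of $\mathbb S^2$ of length $H + \nu$ issuing from $\varphi(t^1)$ in the direction $D\pi \cdot p(t^1)$.

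To this end, I split $[t^1, t^1 + H + \nu]$ into sub-intervals where $q$ lies in some tube and sub-intervals where it lies in the free region. Outside the tubes, $\Sigma_\epsilon$ is $O(\epsilon)$-close in $C^2$ to $\mathbb S^2$ (by the construction of $\Sigma$ in Theorem~\ref{thmConstruction} and the definition of $f_\epsilon$), so the geodesic $q$ is an $O(\epsilon)$-perturbation of a great-circle arc in $D$, and $\varphi$ tracks that arc. On each tube visit, the ``almost avoided'' conditions play three roles: (i) the in-tube length of $\varphi$ is at most $m$; (ii) by Lemma~\ref{lemmaNonGrazingPs} we have $\abs{p_s(t)} \leq 2m$ throughout the visit, so the projected velocities at $t^\mathrm{in}$ and $t^\mathrm{out}$ lie within $O(m)$ of the pure $e_\theta$ direction; and (iii) the Clairaut integral $L = \xi^2 r p_\theta$ is constant along $q$ and evaluated at the same $r = R + \delta$ at entry and exit, forcing $\abs{p_\theta(t^\mathrm{in}) - p_\theta(t^\mathrm{out})} = O(m)$. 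Consequently each tube visit introduces only an $O(m)$ error in both the position and the tangent direction of $\varphi$ relative to a straight continuation of $\gamma_0$.

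The number $N$ of tube visits on $[t^1, t^1 + H + \nu]$ is bounded by a constant depending only on $D$: between two consecutive visits the geodesic must either cross from one tube neighborhood to another, which takes time at least the inter-tube distance on $\mathbb S^2$, or loop around a single obstacle and re-enter it, which also takes time bounded below by a positive geometric constant. Summing over the at most $N+1$ free arcs and the at most $N$ tube visits, the total deviation of $\varphi$ from $\gamma_0$ is $O(\epsilon (H+\nu) + Nm) = O(m)$, which is smaller than $\eta$ once $m$ is small enough in our standing hierarchy $\nu \gg m \gg \delta \gg \epsilon$. This yields an $\eta$-almost geodesic of length $H + \nu$, contradicting Theorem~\ref{lemmaAlmostGeodesic}. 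The main obstacle is step (iii) above, where both halves of the ``almost avoided'' hypothesis are genuinely used: the bound on $p_s$ to force the projected velocity to be nearly tangent to $\partial \Delta_i$, and the bound on the visit time to keep the in-tube detour negligible.
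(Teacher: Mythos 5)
Your proof is correct and follows essentially the same strategy as the paper: show that $\pi\circ q$ is an $\eta$-almost geodesic in the sense of Theorem~\ref{lemmaAlmostGeodesic} and invoke that theorem. The paper's own proof is a single sentence (``The geodesic's projection $\pi\circ q$ is $\nu$-almost a geodesic in $\mathbb S^2$, so we may apply Lemma~\ref{lemmaAlmostGeodesic}''), whereas you spell out why the deviation from a true spherical geodesic is small: $C^2$-closeness outside the tubes, the bounded visit time and $|p_s|\leq 2m$ with the Clairaut integral inside the tubes, and a uniform bound on the number of tube visits coming from the minimum separation between obstacles. These are exactly the estimates the paper leaves implicit behind its appeal to the hierarchy $\nu\gg m\gg\delta\gg\epsilon$. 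Your by-contradiction framing (extracting an interval of length exactly $H+\nu$) is slightly more roundabout than the paper's direct application of Theorem~\ref{lemmaAlmostGeodesic} to the whole interval $(t^1,t^2)$, but it is equivalent; note also that the constant that matters at the end is the $\eta$ produced by Theorem~\ref{lemmaAlmostGeodesic}, not $\nu$ itself, and making $m$ small relative to this $\eta$ is part of what the hierarchy must guarantee — you explicitly acknowledge this, which is a small improvement in precision over the paper's phrasing.
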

\begin{proof} The geodesic's projection $\pi \circ q$ is $\nu$-almost a geodesic in $\mathbb S^2$, so we may apply Lemma~\ref{lemmaAlmostGeodesic}.
\end{proof}

Now, consider an increasing sequence of times $(t_k)_{k \in \mathbb Z}$ such that:
\begin{enumerate}
\item For each $k \in \mathbb Z$, either $t_k$ is a time at which the geodesic exits a tube which is not \emph{almost avoided} (``type \textbf{A}''), or $t_k = t_{k-1} + H + 2 \nu$ (``type \textbf{B}'') ;
\item If the geodesic exits a tube which is not \emph{almost avoided} at a time $t^\mathrm{out}$, then there exists $k \in \mathbb Z$ such that $t_k = t^\mathrm{out}$.
\item For all $k \in \mathbb Z$, $\nu \leq t_{k+1} - t_k \leq H + 3 \nu$.
\end{enumerate}

According to Theorem~\ref{conditionConeRiccati}, we need to show that for any $k \in \mathbb Z$ and any $u$ solution of the Riccati equation along the geodesic $(p(t), q(t))$ with initial condition $u(t_k) = 0$, the solution $u$ is well-defined on $[t_k, t_{k+1}]$ and $u(t_{k+1}) > m$.

In the sphere, since the curvature is $1$, the geodesics follow the Riccati equation $u'(t) = -1 - u(t)^2$. If $q(t)$ remains outside the tubes in the time interval $(t^1, t^2)$, since the metric on $\Sigma_\epsilon$ is close to the metric of the Euclidean sphere, we have $u(t^2) \geq \tan (\arctan (u(t^1)) - t^2) - \nu$. This is also the case if one assumes that $q(t)$ \emph{almost avoids} all the tubes in this time interval (by Lemma~\ref{lemmaU}).

\paragraph{First case.} If $t_k$ is of type \textbf{A}, then consider the first time $t^\mathrm{in}$ (with $t^\mathrm{in} \in [t_k, t_{k+1}]$) at which the geodesic enters a tube which is not \emph{almost avoided} (such a time exists by Lemma~\ref{lemmaHorizon}). Then $u(t^\mathrm{in}) \geq - \tan (H + \nu) - \nu$. Then, by Lemmas~\ref{lemmaU} and~\ref{lemmaUNG}, \[ u(t_{k+1}) \geq u(t^\mathrm{in}) + 2 \tan (\pi/2 - A) - \nu \geq - \tan (H + \nu) + 2 \tan (\pi/2 - A) - 2\nu \geq m. \]

\paragraph{Second case.} If $t_k$ is of type \textbf{B}, notice that $q(t_k)$ is inside a tube which is not \emph{almost avoided}, because of Lemma~\ref{lemmaHorizon}. Therefore, the geodesic remains in the tube during the interval $[t_k + H + \nu, t_{k+1}]$. Since $t_{k+1} - (t_k + H + \nu) \geq \nu$, we may apply Lemma~\ref{lemmaU} and obtain: $u(t_{k+1}) \geq 1/m^2$.

Thus Theorem~\ref{conditionConeRiccati} applies, and Theorem~\ref{thmAnosov} is proved.
\end{proof}

\section{Embedding surfaces of genus at least 11} \label{sectGenus11}
Consider a billiard $D_R$ obtained from $12$ circles of equal radius $R$ whose centers are the vertices of a icosahedron which is inscribed in $\mathbb S^2$ (Figure~\ref{figBilliard}). The circles are disjoint if and only if $R < R_0 = \arctan(2)/2$. The horizon $H_R$ of the billiard $D_R$ depends on $R$.

\begin{prop}
\[ H_R \underset{R \to R_0}{\to} \pi - 2\arctan(2). \]
\end{prop}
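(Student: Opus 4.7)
The plan is to identify the limiting geodesic that achieves the horizon as $R \nearrow R_0 = \arctan(2)/2$ (the radius at which adjacent obstacles become tangent), and then to use a compactness argument to rule out any other family of long geodesics in the limit.

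I would begin by setting up coordinates for the icosahedron: the vertices $(0,\pm 1,\pm\phi)$, $(\pm 1,\pm\phi,0)$, $(\pm\phi,0,\pm 1)$, with $\phi = (1+\sqrt 5)/2$. Adjacent vertices have normalized inner product $1/\sqrt 5$, so their spherical distance is $\arccos(1/\sqrt 5) = \arctan 2$, confirming $R_0 = \arctan(2)/2$. Fix an edge $v_1 v_2$ with midpoint $m$ on the sphere, and let $\gamma$ be the perpendicular bisector great circle $\{p \in \mathbb S^2 : p \cdot v_1 = p \cdot v_2\}$. A short coordinate check yields:
\begin{enumerate}
\item Exactly four icosahedron vertices lie on $\gamma$, namely the two ``third vertices'' of the two icosahedral faces adjacent to $v_1 v_2$, together with their antipodes. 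In cyclic order $w_1, w_2, w_3, w_4$ on $\gamma$, the consecutive $\gamma$-arc-lengths are $\arctan 2,\ \pi - \arctan 2,\ \arctan 2,\ \pi - \arctan 2$, and the two long arcs have their midpoints at $-m$ and $m$.
\item Of the remaining eight vertices, $v_1, v_2$ and their antipodes sit at angular distance exactly $R_0$ from $\gamma$, with perpendicular feet at $\pm m$, while the other four are strictly farther from $\gamma$.
\end{enumerate}

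From (1)--(2) the lower bound is immediate: for $R < R_0$ only the four disks centred at $w_1, \dots, w_4$ meet $\gamma$, each cutting an arc of length $2R$, so the two long free arcs of $\gamma$ in $D_R$ have length $\pi - \arctan 2 - 2R$. These are honest geodesics of $D_R$, hence $H_R \geq \pi - \arctan 2 - 2R \to \pi - 2\arctan 2$.

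The main obstacle is the matching upper bound $\limsup_{R \to R_0} H_R \leq \pi - 2\arctan 2$, since a priori other geodesic families might stay long in the limit. I would argue by contradiction and compactness: suppose $R_n \nearrow R_0$ and $\gamma_n \subset D_{R_n}$ are unit-speed geodesic arcs of length $L_n \to L > \pi - 2\arctan 2$. By Arzelà--Ascoli a subsequence converges in $C^0$ to a great-circle arc $\gamma_\infty$ of length $L$ contained in the closed set $D_{R_0}$. At $R = R_0$ the twelve closed disks tile $\mathbb S^2$ except for 20 closed curvilinear triangular regions, which meet only at the 30 tangent points $m_{ij}$ (edge midpoints). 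If $\gamma_\infty$ stays in one triangular region, its length is at most the region's diameter; a spherical-law-of-cosines computation on the triangle with two sides of length $R_0$ and interior angle $72^\circ$ (the face angle of a regular icosahedral face, obtained from $\cos A = \cos a/(1+\cos a)$ with $\cos a = 1/\sqrt 5$) gives corner-to-corner distance $\arccos(\cos^2 R_0 + \sin^2 R_0 \cos 72^\circ) \approx 35.9^\circ$, strictly below $\pi - 2\arctan 2 \approx 53.13^\circ$. Otherwise $\gamma_\infty$ crosses some tangent point $m_{ij}$, where $\partial \Delta_i$ and $\partial \Delta_j$ share a common tangent perpendicular to the edge $v_i v_j$; any other direction would push $\gamma_\infty$ immediately into one of the two disks, so $\gamma_\infty$ lies on a great circle perpendicular to an edge at its midpoint---exactly the family analyzed in (1)--(2), whose longest free arc in $D_{R_0}$ has length $\pi - 2\arctan 2$. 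Either case contradicts $L > \pi - 2\arctan 2$, completing the proof.
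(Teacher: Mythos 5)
Your proof is correct and follows the same compactness strategy as the paper, but it supplies essentially all the detail that the paper leaves to its figure: you make the lower bound explicit via the free arcs of a perpendicular--bisector great circle (noting that for $R<R_0$ only the four disks centred on that circle meet it, each excising an arc of length $2R$ from a long arc of length $\pi-\arctan 2$), and you prove the upper bound by the dichotomy ``$\gamma_\infty$ stays in one curvilinear triangle (diameter $\le \arccos(\phi/2)=36^\circ < \pi-2\arctan 2$) versus $\gamma_\infty$ passes through a tangency point (hence is forced onto a perpendicular--bisector great circle, whose longest free arc has length exactly $\pi-2\arctan 2$).'' The only nitpick is cosmetic: the corner-to-corner distance of the medial triangle is exactly $36^\circ$ rather than $\approx 35.9^\circ$, which does not affect the comparison.
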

\begin{proof}
Consider a sequence $R_n$ such that $R_n \underset{n \to \infty}{\to} R_0$, and a sequence $\gamma_n$ of portions of geodesics of $\mathbb S^2$ of maximal length, which are contained in $D_{R_n}$. Then there is a subsequence of $\gamma_n$ which converges uniformly to the portion of geodesic represented on Figure~\ref{figHorizon}, whose length is $\pi - 2\arctan(2)$.
\end{proof}

\begin{figure}[h!]
\centering
\includegraphics[width=200pt]{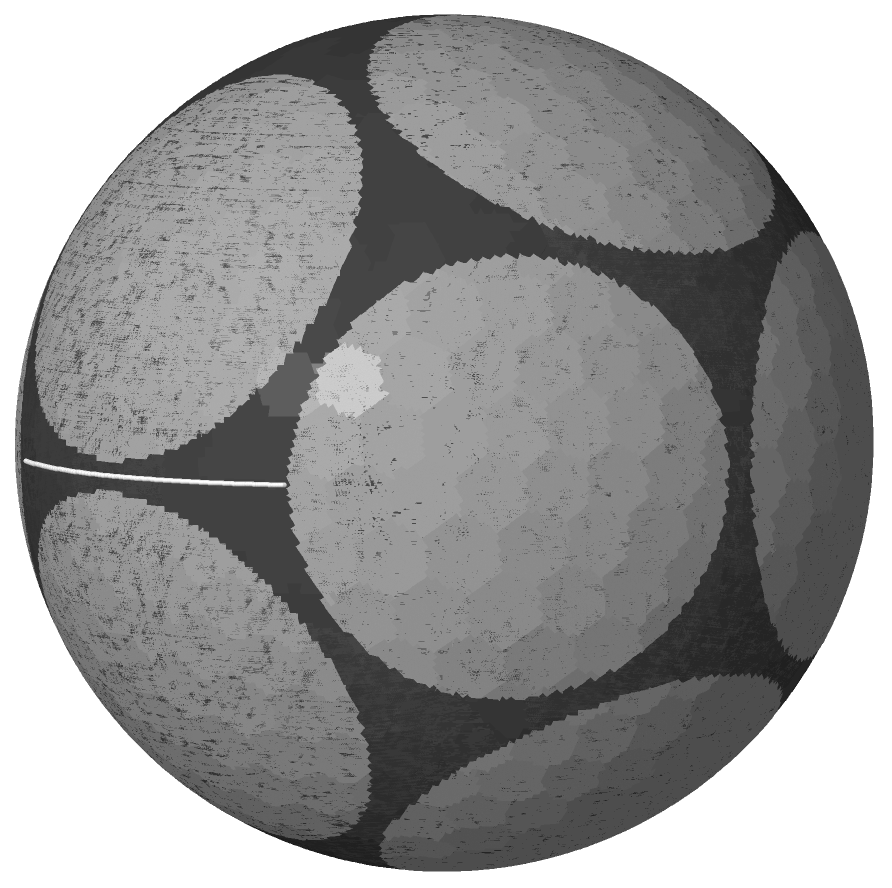}
\caption{The limit of a sequence of geodesics of maximum lengths in $D_{R_n}$.} \label{figHorizon}
\end{figure}

Thus \[ R + H_R \underset{R \to R_0}{\to} \pi - \frac{3}{2} \arctan (2) < \pi/2. \]
Therefore, for $R$ sufficiently close to $R_0$, we have $R + H_R < \pi/2$, and thus $2 \tan (\pi/2 - R) \geq \tan (\pi/2 - R) > \tan (H_R)$.

By applying Theorem~\ref{thmHypBilliard}, we obtain
\begin{cor}
The billiard $D$ is uniformly hyperbolic.
\end{cor}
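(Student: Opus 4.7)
The plan is to apply Theorem \ref{thmHypBilliard} to the billiard $D_R$ for a suitable radius $R$, where the notation $D$ in the corollary should be understood as $D_R$ for $R$ chosen as in the preceding paragraph. Theorem \ref{thmHypBilliard} has two hypotheses: $H < \pi/2$ and $2\tan(\pi/2 - A) > \tan(H)$, where $A$ is the radius of the largest obstacle. In our icosahedral setup all twelve disks have the same radius $R$, so $A = R$.

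First I would invoke the limit computation just established, namely that $R + H_R \to \pi - \tfrac{3}{2}\arctan(2)$ as $R \to R_0$, together with the elementary inequality $\pi - \tfrac{3}{2}\arctan(2) < \pi/2$ (equivalent to $\arctan(2) > \pi/3$, which holds since $\tan(\pi/3) = \sqrt{3} < 2$). By continuity of $R \mapsto H_R$ (which itself follows from the fact that a limit of maximal geodesics is a geodesic, as in the proposition above), this gives $R + H_R < \pi/2$ for every $R$ sufficiently close to $R_0$. Fix such an $R$ once and for all, and set $D = D_R$.

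Next, from $R + H_R < \pi/2$ I would deduce both hypotheses of Theorem \ref{thmHypBilliard} simultaneously. The bound $H_R < \pi/2 - R < \pi/2$ is immediate, giving the first hypothesis. For the second, since $H_R$ and $\pi/2 - R$ both lie in $(0, \pi/2)$ where $\tan$ is strictly increasing and positive, the inequality $H_R < \pi/2 - R$ yields
\[ \tan(H_R) < \tan(\pi/2 - R) \leq 2\tan(\pi/2 - R), \]
which is exactly the second hypothesis with $A = R$. Theorem \ref{thmHypBilliard} then concludes that $D$ is uniformly hyperbolic.

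There is no real obstacle here: the work has already been done upstream, in Theorem \ref{thmHypBilliard} (which reduces uniform hyperbolicity to a geometric inequality via the Riccati equation) and in the limit computation for $H_R$. The corollary is purely a matter of checking that the chosen icosahedral configuration, for $R$ near $R_0$, lies in the regime covered by the theorem; everything follows by plugging into the hypothesis $A + H < \pi/2$ of the final clause of Theorem \ref{thmHypBilliard}.
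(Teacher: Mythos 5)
Your proposal is correct and follows essentially the same route as the paper: take $R$ close enough to $R_0$ so that $R+H_R<\pi/2$ (using the limit $R+H_R\to\pi-\tfrac{3}{2}\arctan(2)<\pi/2$), then note that $A=R$ and apply the final clause of Theorem~\ref{thmHypBilliard}. The one small remark is that invoking continuity of $R\mapsto H_R$ is unnecessary here: the existence of the limit as $R\to R_0$ already yields $R+H_R<\pi/2$ for $R$ sufficiently close to $R_0$.
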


To obtain a billiard with $n$ obstacles ($n \geq 12$), one may add spherical obstacles with small radii, which are disjoint from the others. Thus:
\begin{cor}
For any $n \geq 12$, there exists a spherical billiard with exactly $n$ circular obstacles, which is uniformly hyperbolic.
\end{cor}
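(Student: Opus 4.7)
The plan is to leverage the preceding corollary, which gives a uniformly hyperbolic $12$-obstacle billiard $D = D_R$ whose parameters satisfy $R + H_R < \pi/2$ (hence also the stronger condition $2 \tan(\pi/2 - R) > \tan(H_R)$ required by Theorem~\ref{thmHypBilliard}). The idea is simply to add $n - 12$ further circular obstacles, chosen so small and so well placed that neither of the two relevant quantities $A$ and $H$ is pushed out of range.

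Concretely, I would first fix $R$ close enough to $R_0$ that the uniform hyperbolicity of $D_R$ holds with room to spare, and denote by $U$ the open subset of $\mathbb S^2$ consisting of points lying outside all $12$ obstacles. Since $U$ is nonempty and open, I can pick $n - 12$ distinct points in $U$ and, by a compactness argument, choose a common radius $r > 0$ so small that the geodesic disks of radius $r$ around these points are pairwise disjoint and disjoint from the original $12$ obstacles, with $r < R$. Call $D'$ the resulting billiard with exactly $n$ circular obstacles.

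Next I would verify that $D'$ still satisfies the hypothesis of Theorem~\ref{thmHypBilliard}. For the maximal radius $A'$: since the added obstacles have radius $r < R$, we get $A' = R = A$. For the horizon $H'$: any geodesic arc of $\mathbb S^2$ contained in $D'$ is \emph{a fortiori} contained in $D$, so $H' \leq H_R$. Therefore
\[
2 \tan(\pi/2 - A') = 2 \tan(\pi/2 - R) > \tan(H_R) \geq \tan(H'),
\]
and $H' < \pi/2$ as well. By Theorem~\ref{thmHypBilliard}, $D'$ is uniformly hyperbolic, which yields the statement.

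There is essentially no hard step here: the entire argument is a monotonicity remark (adding obstacles cannot increase the horizon and, when the new radii are chosen smaller than $R$, does not increase $A$) combined with Theorem~\ref{thmHypBilliard}. The only mildly delicate point is choosing the positions and common radius of the new obstacles so as to remain disjoint from the twelve icosahedral disks; this is handled by any routine packing argument in the open set $U$, and I would not dwell on it.
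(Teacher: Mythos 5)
Your argument is exactly the paper's: add $n-12$ small circular obstacles disjoint from the original twelve, observe that this cannot increase the horizon $H$ or the maximal radius $A$, and conclude by Theorem~\ref{thmHypBilliard}. The paper states this in one sentence; you have merely spelled out the monotonicity check, which is entirely correct.
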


Finally, Theorem~\ref{thmConstruction} completes the proof of Theorem~\ref{thmGenus11}.

\section{The Euclidean case: proof of Theorem~\ref{thmImpossible}} \label{sectImpossible}

We will study the geodesics in the ``tube'' $V_{i_0}^{\delta_1}$ for some fixed $i_0$. We assume (after rotation) that the center $q_0$ of the disk $\Delta_{i_0}$ is on the $z$-axis. The tube is a surface of revolution in $\mathbb R^3$, obtained by rotation of a curve $\gamma$ along the $z$-axis. We will use the cylindric coordinates $(r, \theta, z)$.

An essential difference with the spherical case is that the curve $\gamma$ is \emph{not} invariant by a symmetry with respect to a horizontal plane.

We assume that $\gamma(s)$ is parametrized by arc length, that $\gamma$ is in the half-plane $\{\theta = 0\}$, and that $\gamma(0) \in \partial \Delta_{i_0}$, with $\gamma'(0) = e_z$. Denote by $\alpha(s)$ the angle of the tangent vector $\gamma'(s)$ with the unit vector $e_r$, and consider the curvature $k(s) = \frac{d\alpha}{ds}$. Writing $\gamma(s) = (\gamma_r(s), \gamma_z(s))$, with the convention that the normal vector at $\gamma(0)$ is $e_r$, the principal curvatures of the surface $\Sigma$ at a point $\gamma(s)$ are $k_1 = -\frac{\cos (\alpha(s) - \pi/2)}{\gamma_r(s)}$ and $k_2 = - k(s)$; thus the Gauss curvature is
\[ K(s) = \frac{k(s)\cos (\alpha(s)-\pi/2)}{\gamma_r(s)} = \frac{\frac{d\alpha}{ds} \sin \alpha(s)}{\gamma_r(s)} = - \frac{d(\cos \alpha)/ds}{\gamma_r(s)}. \]

Assumption~2 of the theorem implies that $K(s) \geq 0$ and $\alpha(s) < - r_{i_0}/2$ for all $s \in (\delta_2, \delta_1)$. Moreover, since $\alpha(0) = \pi/2$, there exists $\delta_3 \in (0, \delta_2)$ such that $\alpha(\delta_3) = 0$. % Apparemment inutile : We choose this $\delta_3$ maximal: thus for $s \in (\delta_3, \delta_1)$, $\alpha(s) < 0$.

Consider a geodesic $(p(t), q(t))$ in $\Sigma$, which enters the tube at a time $-t_1$ and exits at time $t_1$. The symmetry assumption implies that the quantity $L = \prodscal{r(t) e_\theta(t)}{p(t)} = r(t) p_\theta(t)$ is constant on each unit speed geodesic. We assume that $L = \gamma_r(\delta_3)$ (the intermediate value theorem guarantees the existence of such a geodesic). We will consider $s(t)$ such that $(r(t), z(t)) = \gamma(s(t))$, and write $K(s)$ the Gauss curvature at $q(s(t))$. We have $p_s = \sqrt{1 - \frac{L^2}{r^2}}$; moreover, $s(0) = \delta_3$ and there is a unique time $t_2$ at which $s(t_2) = \delta_2$. 

\begin{lemme} The following estimate holds:
\[ \int_0^{t_2} K(t) dt \geq \frac{1-\cos (r_{i_0}/2)}{\sqrt{\gamma_r(\delta_2)^2 - \gamma_r(\delta_3)^2}}. \]
\end{lemme}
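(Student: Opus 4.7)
The natural approach is to change the integration variable from $t$ to the arc length parameter $s$ of $\gamma$. Since $L = \gamma_r(\delta_3)$, the radial velocity $p_s = \sqrt{\gamma_r(s)^2 - L^2}/\gamma_r(s)$ vanishes at $t=0$ and is strictly positive on $(0, t_2]$, so $s(t)$ increases monotonically from $\delta_3$ to $\delta_2$. Writing $dt = ds/p_s$, using the formula $K(s) = -(\cos\alpha)'(s)/\gamma_r(s)$ recalled in the setup, and noting $\gamma_r(s)\, p_s(s) = \sqrt{\gamma_r(s)^2 - L^2}$, the integral transforms into
\[ \int_0^{t_2} K(t)\,dt \;=\; -\int_{\delta_3}^{\delta_2} \frac{(\cos\alpha)'(s)}{\sqrt{\gamma_r(s)^2 - L^2}}\,ds. \]
The singularity at $s = \delta_3$ is integrable since $\gamma_r(s)^2 - L^2$ has a simple zero there.

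Next I would obtain a uniform comparison on $[\delta_3, \delta_2]$. Since $\gamma_r'(s) = \cos\alpha(s) \geq 0$ (as $|\alpha|$ stays below $\pi/2$ on this small interval), $\gamma_r$ is nondecreasing, so for every $s \in [\delta_3, \delta_2]$ one has
\[ \frac{1}{\sqrt{\gamma_r(s)^2 - L^2}} \;\geq\; \frac{1}{\sqrt{\gamma_r(\delta_2)^2 - \gamma_r(\delta_3)^2}}. \]
I would then argue that $-(\cos\alpha)'(s) \geq 0$ throughout $[\delta_3, \delta_2]$. Indeed, by continuity with the boundary condition $\alpha(\delta_2) \leq -r_{i_0}/2$, the angle $\alpha$ is nonpositive and decreasing from $\alpha(\delta_3) = 0$ down to $\alpha(\delta_2) \leq -r_{i_0}/2$, so $\sin\alpha \leq 0$ and $\alpha' \leq 0$, whence $(\cos\alpha)' = -\sin\alpha \cdot \alpha' \leq 0$.

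With both factors in the integrand sign-controlled, pulling the constant denominator out and applying the fundamental theorem of calculus gives
\[ \int_0^{t_2} K(t)\,dt \;\geq\; \frac{1}{\sqrt{\gamma_r(\delta_2)^2 - \gamma_r(\delta_3)^2}} \int_{\delta_3}^{\delta_2} -(\cos\alpha)'(s)\,ds \;=\; \frac{1 - \cos\alpha(\delta_2)}{\sqrt{\gamma_r(\delta_2)^2 - \gamma_r(\delta_3)^2}}. \]
The conclusion follows from $\alpha(\delta_2) \leq -r_{i_0}/2$ combined with $|\alpha(\delta_2)| \leq \pi/2$, which give $\cos\alpha(\delta_2) \leq \cos(r_{i_0}/2)$.

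The main obstacle is the monotonicity claim for $\alpha$ on the \emph{interior} interval $[\delta_3, \delta_2]$: the explicit hypotheses only control $K$ and $\alpha$ on the exterior interval $(\delta_2, \delta_1)$. One must rule out oscillations of $\alpha$ between its prescribed endpoint values, presumably by leveraging that the tube is close to a model tube produced by the $C^2$-approximation of the billiard together with the symmetry assumption. Without such a monotonicity input, the termwise comparison $1/\sqrt{\gamma_r(s)^2 - L^2} \geq 1/\sqrt{\gamma_r(\delta_2)^2 - L^2}$ could not be combined with the total-variation bound on $\cos\alpha$, and one would be forced into a more delicate integration-by-parts estimate with a regularized test function to kill the singularity at $s = \delta_3$.
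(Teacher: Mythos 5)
The gap you flag at the end of your proposal is real, and it is exactly where your argument breaks. Your direct termwise comparison requires $-(\cos\alpha)'(s)\ge 0$ (equivalently $K\ge 0$) throughout the \emph{interior} interval $(\delta_3,\delta_2)$, but the hypotheses of Theorem~\ref{thmImpossible} only give $K\ge 0$ on the exterior interval $(\delta_2,\delta_1)$, and nothing forces $\alpha$ to be monotone inside the tube. If $\alpha$ oscillates in $(-\pi/2,0)$, the portions where $(\cos\alpha)'>0$ contribute negatively to $\int K\,dt$ precisely where the denominator $\sqrt{\gamma_r(s)^2-L^2}$ is smallest, and your pointwise lower bound collapses.

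The paper does not try to establish monotonicity of $\alpha$; it sidesteps it with an integration by parts. Setting $g(s)=-\bigl(\gamma_r(s)^2-\gamma_r(\delta_3)^2\bigr)^{-1/2}$ and writing the antiderivative as $\cos\alpha-1$ (not $\cos\alpha$), one gets
\[
\int_{\delta_3}^{\delta_2} g(s)\,\frac{d(\cos\alpha)}{ds}\,ds
= g(\delta_2)\bigl(\cos\alpha(\delta_2)-1\bigr)
- \lim_{s\to\delta_3} g(s)\bigl(\cos\alpha(s)-1\bigr)
- \int_{\delta_3}^{\delta_2} g'(s)\bigl(\cos\alpha(s)-1\bigr)\,ds .
\]
Now the only inputs are the universal inequality $\cos\alpha-1\le 0$ and $g'\ge 0$, i.e.\ $\gamma_r'\ge 0$, which is considerably weaker than ``$\alpha$ monotone'' (it is $\abs{\alpha}\le\pi/2$). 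The sign of $K$ in the interior is never used. The boundary term at $\delta_3$ vanishes by a plain limit, not a regularized test function: $\cos\alpha(\delta_3)-1=0$ and $\alpha$ is $C^1$, so $\cos\alpha(s)-1=O\bigl((s-\delta_3)^2\bigr)$ kills the $O\bigl((s-\delta_3)^{-1/2}\bigr)$ singularity of $g$. You correctly diagnosed the difficulty and even named integration by parts as the escape route, but the particular trick of choosing $\cos\alpha-1$ as antiderivative (so the singular endpoint contributes nothing and both remaining terms are sign-controlled without any curvature-sign hypothesis) is the missing idea.
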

\begin{proof} We compute:
\[ \begin{aligned} \int_0^{t_2} K(t) dt & = \int_{\delta_3}^{\delta_2} \frac{K(s(t))}{ds/dt} ds
\\ & = - \int_{\delta_3}^{\delta_2} \frac{d(\cos \alpha)/ds}{\sqrt{\gamma_r(s)^2 - \gamma_r(\delta_3)^2}} ds
\\ & = \int_{\delta_3}^{\delta_2} g(s) \frac{d(\cos \alpha)}{ds} ds
\end{aligned} \]
where \[ g(s) = - \frac{1}{\sqrt{\gamma_r(s)^2 - \gamma_r(\delta_3)^2}}. \]

Notice that $g$ is differentiable on $(\delta_3, \delta_2)$ with $g'(s) > 0$, and that there exists $a > 0$ such that when $s$ tends to $\delta_3$,
\[ g(s) = - \frac{a}{\sqrt{s - \delta_3}} + o\left(\frac{1}{\sqrt{s - \delta_3}}\right). \]

% \[ g(s) = \frac{\gamma_r(\delta_3)}{\delta_3} \sqrt{\gamma_r(s)^2 - \delta_3^2} + o\left(\frac{\delta_3)}{\delta_3} \sqrt{\gamma_r(s)^2 - \delta_3^2}\right) \]
% where $\gamma_r(\delta_3)$ is $\gamma_r(s)$ for $s = \delta_3$.

Now integrating by parts,
\[ \small \begin{aligned} \int_{\delta_3}^{\delta_2} g(s) \frac{d(\cos \alpha)}{ds} ds = g(\delta_2) (\cos \alpha(\delta_2)-1) - \lim_{s \to \delta_3}  g(s) (\cos \alpha(s)-1) - \int_{\delta_3}^{\delta_2} g'(s) (\cos \alpha(s)-1) ds. \end{aligned} \]

Since $\lim_{s \to \delta_3} g(s) (\cos \alpha(s)-1) = 0$, and $g'(s) (\cos \alpha(s)-1) \leq 0$, we obtain:
\[ \begin{aligned} \int_0^{t_2} K(t) dt & \geq g(\delta_2) (\cos \alpha(\delta_2)-1)
\\ & \geq \frac{1-\cos (r_{i_0}/2)}{\sqrt{\gamma_r(\delta_2)^2 - \gamma_r(\delta_3)^2}}.
\end{aligned} \]
\end{proof}

The existence of conjugate points in the geodesic flow is given by the following lemma:

\begin{lemme} \label{conjugatePoints}
Consider the Riccati equation along the geodesic $(p(t), q(t))$:
\[ u(0) = 0, \quad \frac{du}{dt} = -K(t) - u(t)^2. \]
The solution of this equation blows up to $-\infty$ in finite positive time, and to $+\infty$ in finite negative time.
\end{lemme}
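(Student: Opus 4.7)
The plan is to combine the integral estimate $\int_0^{t_2} K(t)\,dt \geq M$ from the previous lemma (with $M$ made as large as we wish by shrinking $\delta_2$) with two Riccati comparisons, and to use a time-reversal symmetry of the chosen geodesic about $t=0$ for the negative direction.

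For positive time, I would introduce the auxiliary function $v(t) = -\int_0^t K(\tau)\,d\tau$, which solves $v' = -K$ with $v(0)=0$. Since $(u-v)' = -u^2 \leq 0$ and $u(0) = v(0) = 0$, the inequality $u \leq v$ holds on any common interval of definition. In particular $v$ is locally bounded, so $u$ cannot escape to $+\infty$; if $u$ does blow up in $[0, t_2]$, the blow-up must be to $-\infty$ and we are done. Otherwise $u(t_2) \leq v(t_2) = -\int_0^{t_2} K \leq -M$ by the previous lemma. On $[t_2, t_1]$ the assumption-driven bound $\alpha(s) < -r_{i_0}/2$ recalled just before the previous lemma implies $K \geq 0$, so $u' \leq -u^2$ there; comparing with $w(t) = -\bigl(\tfrac{1}{M} - (t-t_2)\bigr)^{-1}$, which solves $w'=-w^2$, $w(t_2)=-M$, and blows up at $t_2 + 1/M$, yields $u \leq w$ and hence $u \to -\infty$ by time $t_2 + 1/M$. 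Since $p_s \leq 1$ gives $t_1 - t_2 \geq \delta_1 - \delta_2$, choosing $\delta_2$ small enough that $1/M < \delta_1 - \delta_2$ (feasible because $M \to \infty$ while $\delta_1$ is fixed) keeps this blow-up inside the tube.

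For negative time, the choice $L = \gamma_r(\delta_3) = r(0)$ makes $t=0$ a turning point of the geodesic on the surface of revolution, so the standard symmetry of geodesics about a turning point gives $s(-t) = s(t)$. Because $K$ depends only on $s$, it follows that $K(-t) = K(t)$. Setting $\tilde u(t) = -u(-t)$, a direct computation shows $\tilde u$ solves the same Riccati equation with $\tilde u(0)=0$, so uniqueness gives $\tilde u = u$; that is, $u$ is an odd function of $t$. The positive-time blow-up at some $t^\ast > 0$ therefore produces $u(-t^\ast) = +\infty$, establishing the second half of the statement.

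The main obstacle, and the place where the quantitative choice of $\delta_2$ matters uniformly in $\Sigma$, is ensuring that $M$ is large enough to force the blow-up before the geodesic leaves $V_{i_0}^{\delta_1}$. This is handled by the estimate $\gamma_r(\delta_2)^2 - \gamma_r(\delta_3)^2 \lesssim \delta_2$, valid because $\gamma$ has unit speed and hence $|\gamma_r'| \leq 1$; this yields $M \gtrsim \delta_2^{-1/2}$, which dominates $(\delta_1 - \delta_2)^{-1}$ as soon as $\delta_2 \ll \delta_1^2$, completing the argument.
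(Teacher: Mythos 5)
Your proposal is correct and follows essentially the same route as the paper: bound $u(t_2)$ below zero by $-\int_0^{t_2}K$, then use $u' \leq -u^2$ on $(t_2,t_1)$ to force blow-up to $-\infty$ before leaving the tube, and invoke a symmetry about the turning point $t=0$ for the negative-time direction. You go slightly further than the paper in two places the paper leaves terse — explicitly verifying that $u$ is odd via the reflected solution $\tilde u(t)=-u(-t)$, and making quantitative the condition ``$\delta_2$ sufficiently small with respect to $\delta_1$'' through the bound $\gamma_r(\delta_2)^2-\gamma_r(\delta_3)^2\lesssim\delta_2$ — but these are elaborations, not a different argument.
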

\begin{proof} First, notice that
\[ u(t_2) \leq - \int_{0}^{t_2} K(t) dt \leq - \frac{1-\cos (r_{i_0}/2)}{\sqrt{\gamma_r(\delta_2)^2 - \gamma_r(\delta_3)^2}}. \]

Moreover, for $t \in (t_2, t_1)$, since $K(t) \geq 0$, we have
\[ \frac{du}{dt} \leq -u(t)^2 \]
and so, for $t \in (t_2, t_1)$,
\[ u(t) \leq \frac{1}{t-t_2+1/u(t_2)}. \]
Thus, the solution of the Riccati equation blows up to $- \infty$ before the time \[ t_2 - \frac{1}{u(t_2)} \leq t_2 + \frac{\sqrt{\gamma_r(\delta_2)^2 - \gamma_r(\delta_3)^2}}{1-\cos (r_{i_0}/2)}, \] thus before the time $t_1$, provided that $\delta_2$ is sufficiently small with respect to $\delta_1$.

By symmetry, the solution blows up to $+ \infty$ in negative times, after the time $-t_1$.
\end{proof}

This ends the proof of Theorem~\ref{thmImpossible}.

\section*{Acknowledgements}
This work was partially supported by the European Research Council.

\bibliographystyle{alpha}
\bibliography{ref}

\end{document}